\theoremstyle{plain} \hoffset -2 cm \voffset -.8 cm \textwidth 16
\newtheorem{teo}{Theorem}
\newtheorem{oss}[teo]{Remark}
\newtheorem{Prop}[teo]{Proposition}
\newtheorem{war}[teo]{Warning}
\newtheorem{lemma}[teo]{Lemma}
\newtheorem{Defi}[teo]{Definition}
\newtheorem{es}[teo]{Example}
\newtheorem{corollario}[teo]{Corollary}
\newtheorem{no}[teo]{Notation}
\newcommand{\cc}{_{^{_\HH}}}
\newcommand{\vv}{_{^{_\VV}}}
\newcommand{\res}{\mathop{\hbox{\vrule height 7pt width .5pt depth 0pt
\vrule height .5pt width 6pt depth 0pt\,}}\nolimits}
\def \ope{^\bot}
\def \ot{^\top}
\def \op{^\perp}
\def \DH{h}
\newcommand{\LL}{\mathop{\hbox{\vrule height .5pt width 6pt depth
0pt \vrule height 7pt width .5pt depth 0pt\,}}\nolimits}
\newcommand{\rr}{_{^{_\mathit{R}}}}
\newcommand{\ngr}{_{^{_{\mathrm Gr}}}}
\newcommand{\ci}{_{^{_{\HH_i}}}}
\newcommand{\cd}{_{^{_{\HH_2}}}}
\newcommand{\cu}{_{^{_{\HH_1}}}}
\def \cont{{\mathbf{C}}}
\def \cin{{\mathbf{C}^{\infty}}}
\def\dim {\mathrm{dim}}
\def\dc {d_{CC}}
\def\ss{_{^{_{\HS}}}}
\def\ts{_{^{_{\TS}}}}
\def\x{x}
\def\g{\mathit{g}\cc}
\def\dg{\mathit{grad}\cc}
\def\qq{\mathit{grad}\ss}
\def \nis {\sigma^{n-2}\cc}
\def \per {\sigma^{n-1}\cc}
\def\SC{{C}}
\def\nm{\nu}
\def\UU{\mathcal{U}}
\def \nn{\nu\cc}
\def \XH{\mathfrak{X}(\HH)}
\def \XX{\mathfrak{X}}
\def \MS{\mathcal{H}\cc}
\def \P{{\mathcal{P}}}
\def \PH{\P\cc}
\def \Om{\Omega}
\def \ee{\mathrm{e}}
\def \Omn{\sigma^n\rr}
\def \R{\mathbb{R}}
\def \Rn{\mathbb{R}^{\DN}}
\def \div{\mathit{div}}
\def \GG{\mathbb{G}}
\def \gg{\mathfrak{g}}
\def\divh{\div\cc}
\def\lh{\mathcal{D}\ss}
\def\lll{\mathcal{L}\ss}
\def\tsc{\nabla^{^{_{\TT\!{S}}}}}
\def\gs{\nabla^{_{\HS}}}
\def\gc{\nabla^{_{\HH}}}
\def\UU{\mathcal{U}}
\def\UU{\mathcal{U}}
\def \nn{\nu_{_{\!\HH}}}
\def \XG{\mathfrak{X}(\GG)}
\def \ee{\mathrm{e}}
\def \Om{\Omega}
\def \Rn{\mathbb{R}^{n}}
\def \R{\mathbb{R}}
\def \cji {c_{j\,i}(x)}
\def \C { C(x):=[\cji]_{j,i},\,\, {j=1,\ldots,m \,,\, i=1,\ldots,n}}
\def \Qdim {Q:=\sum_{i=1}^{k}i\,\DH_i}
\def \X {X=(X_{1}, \ldots, X_{m_1})}
\def \X0 {X_{1}(0)\!=\!\partial_{x_{1}}, \ldots, X_{m_1}(0)\!=\!\partial_{x_{m_1}}}
\def \HG {\HH\GG}
\def \HS {\HH S}
\def \TG {\mathit{T}\GG}
\def \HH {\mathit{H}}
\def \VV {\mathit{V}}
\def \TT {\mathit{T}}
\def \TS {\mathit{T} S}
\def \grad{\mathit{grad}}
\def \C0H{\mathbf{C}_{0}^{\infty}(U,\HG)}
\def \C00{\mathbf{C}_{0}^{\infty}(U)}
\def \C01{\mathbf{C}_{0}^{1}(U)}
\def \L1{d\,\mathcal{L}^n}
\def \H1{\mathcal{H}_{{\bf cc}}^{1}}
\def \Ar{\mathcal{H}^{n-1}_{Eu}}
\def \Vol{{\mathcal Vol}^{n}}
\def \exp{\textsl{exp\,}}
\def \Om{\Omega}
\def \Rn{\mathbb{R}^{n}}
\def \R{\mathbb{R}}
\def \cji {c_{j\,i}(x)}
\def \C { C(x):=[\cji]_{j,i},\,\, {j=1,\ldots,m \,,\, i=1,\ldots,n}}
\def \GG{\mathbb{G}}
\def \gg{\mathfrak{g}}
\def \X {X=(X_{1}, \ldots, X_{m_1})}
\def \X0 {X_{1}(0)\!=\!\partial_{x_{1}}, \ldots, X_{m_1}(0)\!=\!\partial_{x_{m_1}}}
\def \HG {\mathit{H}}
\def \C0H{\mathbf{C}_{0}^{\infty}(\Om,\HG)}
\def \C00{\mathbf{C}_{0}^{\infty}(\Om)}
\def \C01{\mathbf{C}_{0}^{1}(\Om)}
\def \exp{\textsl{exp\,}}
\def\GG{\mathbb{G}}
\begin{document}

\vskip 3cm
\begin{center}
{\bf \LARGE{Geometric inequalities in  Carnot groups}}

\vskip 1cm {\large Francescopaolo Montefalcone\footnote{F. M. has been partially supported by the Fondazione CaRiPaRo Project ``Nonlinear Partial Differential Equations: models, analysis, and control-theoretic problems".}\footnote{The authors wish to thank the anonymous referee for helpful comments that improved the paper.} }

\end{center}

\markboth{Francescopaolo Montefalcone}{Geometric inequalities in
Carnot groups}

\section*{Abstract} \small
Let $\GG$  be a sub-Riemannian $k$-step Carnot group of homogeneous dimension
$Q$. In this paper, we shall prove  several geometric inequalities concerning smooth hypersurfaces (i.e. codimension one submanifolds) immersed in $\GG$, endowed with the $\HH$-perimeter measure.
\\{\noindent \scriptsize \sc Key words and phrases:}
{\scriptsize{\textsf {Carnot groups; Sub-Riemannian geometry;
hypersurfaces; geometric inequalities.}}}\\{\scriptsize\sc{\noindent Mathematics Subject
Classification:}}\,{\scriptsize \,\,49Q15, 46E99, 43A80.}

\normalsize

\tableofcontents

\section{Introduction}

During the last years there was an increasing interest in studying
Analysis and Geometric Measure Theory in  metric spaces (see
\cite{A2} \cite{AK1, AK2}, \cite{8}, \cite{DaSe}, \cite{GN},
\cite{33} and bibliographic references therein,  but   this list
is far from being exhaustive). In this regard, important examples
of highly non-Euclidean geometries are represented by the
so-called Carnot-Charath\'{e}odory (or sub-Riemannian) geometries;
see \cite{6}, \cite{18}, \cite{28}, \cite {29, 30, 31},
\cite{Stric}, \cite{Ver}. In this context, Carnot groups  play
the role of modeling the tangent space (in a suitable generalized
sense, which is related with the Gromov-Hausdorff convergence) of
a sub-Riemannian manifold; see \cite{18}, \cite{28}. For this and
many other reasons, Carnot groups are an intriguing field of
research; see \cite{3}, \cite{4}, \cite{5}, \cite{7}, \cite{9},
\cite{12, 13}, \cite{14, 15, 16, 17}, \cite{HP}, \cite{19, 21},
\cite{Montea, 22}, \cite{32}.

A $k$-{\it{step Carnot group}}
$(\GG,\bullet)$ is an $n$-dimensional, connected, simply
connected, nilpotent, stratified Lie group (with respect to the
group multiplication $\bullet$) whose Lie algebra $\gg\cong\Rn$
satisfies:\[ {\mathfrak{g}}={\HH}_1\oplus...\oplus {\HH}_k,\quad
 [{\HH}_1,{\HH}_{i-1}]={\HH}_{i}\quad(i=2,...,k),\,\,\,
 {\HH}_{k+1}=\{0\}.\]We assume that $\DH_i=\dim\HH_i \,(i=1,...,k)$ so that $n=\sum_{i=1}^{k}\DH_i$. Any Carnot group $\GG$ has a 1-parameter
family of dilations, adapted to the stratification, that makes it
a {\it homogeneous group}, in the sense of Stein's definition; see
\cite{Stein}. We refer the reader to Section \ref{cg} for a more detailed introduction to Carnot groups.

In this paper, we shall prove  some geometric inequalities concerning smooth hypersurfaces  immersed in a sub-Riemannian $k$-step Carnot group $\GG$ of homogeneous dimension $\Qdim$. We have to stress that hypersurfaces will be  endowed  with the so-called $\HH$-perimeter measure $\per$, which is a natural substitute of the intrinsic $(Q-1)$-dimensional CC Hausdorff measure.
In Section \ref{hgxzc}, we will discuss some preliminaries notions concerning  homogeneous measures and the horizontal geometry of hypersurfaces. Then we will recall some tools which will be important in the sequel, such as a Coarea-type formula and the horizontal integration by parts theory; see Section \ref{othy}.

In Section \ref{poincinsect} we will extend to this setting some Isoperimetric-type Constants, introduced by Cheeger in the Seventies for compact Riemannian manifolds in \cite{Cheeger} and later studied by Yau in \cite{Yau}.

In particular, we shall prove the validity of some
global inequalities for smooth compact
hypersurfaces with (or without) boundary, immersed into $\GG$.
Here, we would like to remark that there is a strong relationship among these inequalities and
some eigenvalue  problems related to the 2nd order differential
operator $\lll$ (which is nothing but a horizontal version of the Laplace-Beltrami operator); see, more precisely, Definition \ref{epr} in Section \ref{hgxzc}.

Roughly speaking, after defining the
isoperimetric constants (in purely geometric terms), we will show
that they are equal to the infimum of some  Rayleigh's quotients. More precisely, let $S\subset\GG$ be a smooth hypersurface
and assume  $\partial S\neq
\emptyset$. Furthermore, set
$${\rm Isop}(S):=\inf\frac{\nis(N)}{\per(S_1)},$$where
$N\subset S$ is a smooth hypersurface of $S$ such that 
$N\cap \partial S=\emptyset$ and $S_1$ is the unique
$(n-2)$-dimensional submanifold of $S$ such that  $N=\partial
S_1$. We have to stress that $\per$ and $\nis$ denote homogeneous measures on $S_1$ and $N$, respectively. These measures can be thought of, respectively,  as the $(Q-1)$-dimensional and the $(Q-2)$-dimensional CC Hausdorff measures on $S_1$ and $N$; see Section \ref{hgxzc}. Then, it will be shown that $${\rm
Isop}(S)=\inf\frac{\int_S|\qq \psi|\per}{\int_S|\psi|\per},$$
where the infimum is taken over suitably smooth functions on
$S$ such that $\psi|_{\partial S}=0$. As mentioned, this constant is related to
the first non-zero eigenvalue $\lambda_1$ of the following
Dirichlet-type problem:
\begin{displaymath}
\left\{%
\begin{array}{ll}
   -\lll\psi=\lambda\, \psi,\\\quad\, \psi|_{\partial S} =0; \\
\end{array}%
\right.\end{displaymath}see Definition \ref{epr}. Indeed, we shall
see that \[\lambda_1\geq \frac{\left({\rm Isop}(S)\right)^2}{4};\]
see Corollary \ref{555}. Some similar results concerning another
isoperimetric constant will be proved; see Theorem \ref{zdaz} and
Corollary \ref{1zdaz}. The proofs of these results follow the
scheme of the Riemannian case, for which we refer the reader to
Yau, \cite{Yau}; see also \cite{Cheeger} and \cite{Ch1, Ch3}. We
also remark that the main technical tool in the original proofs is
the Coarea formula.

In Section \ref{ChRe} we shall prove two geometric inequalities involving volume, $\HH$-perimeter and  the 1st eigenvalue of the operator $\lll$ on $S$. These results generalize an inequality of Chavel (see \cite{88}) and an inequality of Reilly (see \cite{222}), respectively.

In Section \ref{wlinisoineq} we will extend to the
Carnot setting some classical differential-geometric results (such as linear isoperimetric inequalities); see,
for instance,  \cite{b}
 and references therein.
 The starting point is an integral formula very similar to the Euclidean  Minkowsky
 Formula; see  Corollary \ref{Mink} for a precise statement.  In particular, we will
 show that\[(\DH-1)\,\per(S)\leq
R \left( \int_S \left(|\MS|+ |C\cc\nn|\right)\,\per+ \nis(\partial
S) \right) \]where $S\subset\GG$ is a compact hypersurface
with boundary and $R$ denotes the radius of a homogeneous $\varrho$-ball
circumscribed about $S$. From this linear (isoperimetric) inequality, it is possible to infer some geometric
 consequences and, among them, we  prove a weak monotonicity
 inequality for the $\HH$-perimeter; see Section \ref{wmf}, Proposition \ref{in}.

Section \ref{heinz} contains a theorem about non-horizontal graphs in 2-step Carnot groups.
This generalizes a classical result of  Heinz \cite{112}; see also Chern, \cite{933}.

Let us describe this result in the simpler case of the Heisenberg group $\mathbb H^1$. So let
  $S\subset\mathbb H^1$ be a $T$-graph  associated with a function  $t=f(x, y)$ of class
$\mathbf{C}^2$ over the $xy$-plane. If the horizontal mean
curvature $\MS$ of $S$  satisfies a bound $|\MS|\geq C>0$, then
$$C
\mathcal{H}_{Eu}^2(\P_{xy}(\UU))\leq\mathcal{H}_{Eu}^1(\P_{xy}(\partial\UU))$$for
every $\cont^1$-smooth relatively compact open set $\UU\subset S$,
where $\mathcal{H}_{Eu}^i\,(i=1,2)$ is the usual $i$-dimensional
Euclidean Hausdorff measure and $\P_{xy}$ is the orthogonal
projection onto the $xy$-plane. Hence, taking $\UU:=S\cap
C_r({\mathcal T})$, where $C_r({\mathcal T})$ denotes a  vertical
cylinder of radius $r$ around the $T$-axis of ${\mathbb H}^1$,
yields
$$r\leq \frac{2}{C}$$ for every $r>0$.
 It follows that any entire $xy$-graph of class $\cont^2$, having either constant or only bounded  horizontal mean curvature $\MS$, must be necessarily  a $\HH$-minimal surface. An analogous result holds true in the framework of step 2 Carnot groups; see Theorem \ref{pdt5}.

 In Section
 \ref{wme} we shall study some (local) Poincar\'e-type
 inequalities,
 depending on the local geometry of the hypersurface $S$ and, more precisely, on its characteristic set $C_S$; see Theorem \ref{0celafo}, Theorem \ref{1celafo}.

For instance, let $S\subset\GG$ be a  $\cont^2$-smooth
hypersurface with bounded horizontal mean curvature $\MS$. Then,
we shall prove that for every $x\in S$ there exists $R_0\leq {\rm
dist}_\varrho(x,\partial S)$ (which explicitly depends on $C_S$) such that:
\[
\left(\int_{S_R}|\psi|^p\per\right)^{\frac{1}{p}}\leq C_p\,
R\left(\int_{S_R}|\qq\psi|^p\per\right)^{\frac{1}{p}} \qquad
p\in[1,+\infty[
\]for all
$\psi\in\cont_0^1(S_R)$ and all $R\leq R_0$, where $S_R:=S\cap
B_\varrho(x,R)$.

 These
 results are obtained by means of elementary ``linear'' estimates starting from the
  horizontal integration by parts formula, together with a
 simple analysis of the role played by the characteristic set.
Finally, in Section \ref{Cacciopp} we will prove the validity of a Caccioppoli-type inequality for  weak solutions of the operator $\lll$.

\subsection{Carnot groups}\label{cg}

A $k$-{\it{step Carnot group}}
$(\GG,\bullet)$ is a finite-dimensional connected, simply connected, nilpotent and
stratified Lie group with respect to a polynomial group law $\bullet$.
The Lie algebra $\gg \cong\Rn$
fulfils the conditions: ${\mathfrak{g}}={\HH}_1\oplus...\oplus
{\HH}_k$,  $[{\HH}_1,{\HH}_{i-1}]={\HH}_{i}\quad\forall\,\,i=2,...,k+1$, ${\HH}_{k+1}=\{0\},$ where  $[\cdot,\cdot]$ denotes the Lie brackets and each $\HH_i$ is a vector subspace of $\gg$.
In particular, we denote by $0$ the identity of $\GG$ and assume that  $\gg\cong\TT_0\GG$. 
We also use the notation
 $\HH:=\HH_1$ and
${\VV}:={\HH}_2\oplus...\oplus {\HH}_k.$
The subspaces $\HH$ and $\VV$ are smooth subbundles
 of $\TG$ called
{\it horizontal}  and {\it vertical}, respectively. 
\begin{no}\label{1notlne0}Throughout this paper, we  denote by $\P\ci:\TG\longrightarrow\HH_i$ the orthogonal projection map from $\TG$ onto $\HH_i$ for any $i=1,...,k$. In particular, we set $\P\cc:=\P\ci$. Analogously, we  set $\P\vv:\TG\longrightarrow\VV$ to denote the orthogonal projection map from $\TG$ onto $\VV$.
\end{no}

Let
$\DH_i:=\dim{{\HH}_i}$ for any $i=1,...,k$. Set $n_0:=0$ and $n_i:=\sum_{j=1}^{i}\DH_j$ for any $i=1,...,k$. Note that $n_1=\DH_1$, $n_2=\DH_1+\DH_2$,..., and $n_k=n$. 

\begin{no}\label{1notlne1}Throughout this paper, we  set $I\ci:=\{n_{i-1}+1,...,n_i\}$ for any $i=1,...,k$. We also set $I\vv:=\{h_1+1,...,n\}$ 
and use Greek letters
 $\alpha, \beta, \gamma,...,$ for indices in $I\vv$. For the sake of simplicity, we  set $\DH:=\DH_1$ and $I\cc:=I\cu$.
\end{no}

The horizontal
bundle $\HH$ is generated by a frame
$\underline{X\cc}:=\{X_1,...,X_{\DH}\}$ of left-invariant vector
fields. This frame can be completed to a global  graded,
left-invariant frame $\underline{X}:=\{X_1,...,X_n\}$ for $\TG$.
Note that the standard basis $\{\ee_i:i=1,...,n\}$ of $\Rn$
can be relabeled to be {\it graded} or {\it adapted to the
stratification}. Any left-invariant vector field of the frame
$\underline{X}$ is given by
${X_i}(x)={L_x}_\ast\ee_i\,(i=1,...,n)$, where ${L_x}_\ast$
denotes the differential of the left-translation $L_x$, defined by ${L_x}y:=x\bullet y\,\,\,\forall\,\,y\in\GG$. We also
fix a Euclidean metric on $\gg=\TT_0\GG$ such that $\{\ee_i:
i=1,...,n\}$ is an orthonormal  basis. This metric $g=\langle\cdot,\cdot\rangle$ extends to the
whole tangent bundle  by left-translations and makes
$\underline{X}$ an
  orthonormal  left-invariant frame. Therefore $(\GG, g)$
 is a Riemannian manifold.

 Let $\exp:\gg\longrightarrow\GG$ be the exponential map. Hereafter, we will use  {\it exponential coordinates of the 1st
kind}; see \cite{Varad}, Ch. 2, p. 88.

As for the case of nilpotent Lie groups, the 
multiplication $\bullet$ of $\GG$ is uniquely determined by the ``structure'' of the
 Lie algebra $\gg$. This is the content of the {\it Baker-Campbell-Hausdorff
formula}; see \cite{Corvin}. More precisely, one has
$$\exp(X)\bullet\exp(Y)=\exp(X\star Y)\qquad\forall\,\,X,\,Y \in\gg,$$ where
${\star}:\gg \times \gg\longrightarrow \gg$ denotes the so-called
 {\it Baker-Campbell-Hausdorff product} given by \begin{eqnarray}\label{CBHformula}X\star Y= X +
Y+ \frac{1}{2}[X,Y] + \frac{1}{12} [X,[X,Y]] -
 \frac{1}{12} [Y,[X,Y]] + \mbox{ brackets of length} \geq 3.\end{eqnarray}

Using exponential coordinates, \eqref{CBHformula}  the
group multiplication $\bullet$ turns out to be polynomial and
explicitly computable; see \cite{Corvin}. Moreover,
$0=\exp(0,...,0)$ and the inverse of $x\in\GG$
 $(x=\exp(x_1,...,x_{n}))$ is ${x}^{-1}=\exp(-{x}_1,...,-{x}_{n})$.

A {\it sub-Riemannian metric} $g\cc$ is a symmetric positive
bilinear form on the horizontal bundle $\HH$. The {\it
{CC}-distance} $\dc(x,y)$ between $x, y\in \GG$ is given by
$$\dc(x,y):=\inf \int\sqrt{g\cc(\dot{\gamma},\dot{\gamma})}\,dt,$$
where the infimum is taken over all piecewise-smooth horizontal
paths $\gamma$ joining $x$ to $y$. Later, we shall choose
$\g:=g_{|\HH}$.

Carnot groups are {\it homogeneous groups}, that is, they admit a
1-parameter group of
automorphisms
$\delta_t:\GG\longrightarrow\GG$ $(t\geq 0)$ defined by
$\delta_t x
:=\exp\left(\sum_{j,i_j}t^j\,x_{i_j}\ee_{i_j}\right)$,
where $x=\exp\left(\sum_{j,i_j}x_{i_j}\ee_{i_j}\right)\in\GG.$ As already said, the
{\it homogeneous dimension} of $\GG$ is the integer
 $\Qdim$
coinciding with the {\it Hausdorff dimension} of $(\GG,\dc)$ as a
metric space; see \cite{28}.

We recall that a continuous distance
$\varrho:\GG\times\GG\longrightarrow\R_+\cup\{0\}$  is a \it homogeneous distance \rm if, and only if,
\begin{eqnarray*}\varrho(x,y)=\varrho(z\bullet x,z\bullet
y)\quad\forall\,x,\,y,\,z\in\GG;\qquad
\varrho(\delta_tx,\delta_ty)=t\varrho(x,y)\quad\forall t\geq
0.\end{eqnarray*}

The {\it structural constants} of $\gg$ (see \cite{Ch1}) associated with the frame
$\underline{X}$ are defined by
 $\SC^r_{ij}:=\langle [X_i,X_j],
 X_r\rangle$ $\forall\, i,j, r=1,...,n$.
They are skew-symmetric and satisfy Jacobi's identity. The stratification of
the Lie algebra $\gg$  implies a fundamental ``structural'' property of
Carnot groups, i.e. if $ X_i\in {\HH}_{l},\, X_j \in {\HH}_{m}$,
then $[X_i,X_j]\in {\HH}_{l+m}$. It is worth remarking that, if $i\in
I{\!_{^{_{{\HH}_s}}}}$ and $j\in I{\!_{^{_{{\HH}_r}}}}also  $, then
\begin{equation}\label{notabilmente}\SC^m_{ij}\neq 0  \Longrightarrow m \in I{\!_{^{_{{\HH}_{s+r}}}}}.\end{equation}
 Equivalently, if $\SC^r_{ij}\neq 0$, then ${\rm ord}(i)+{\rm ord}(j)={\rm ord}(r)$, where  ${\rm ord}:\{1,...,n\}\rightarrow\{1,...,k\}$ is the function defined as ${\rm ord}(l)=i\Longleftrightarrow l\in I\ci$. 
\begin{no}\label{noconstmatr}Henceforth, we shall set
\begin{itemize}\item$C^\alpha\cc:=[\SC^\alpha_{ij}]_{i,j=1,...,\DH}\in\mathcal{M}_{\DH\times
\DH}(\R) \qquad \forall\,\,\alpha\in I\cd=\{\DH+1, ..., \DH+\DH_{2}\}$;\item
$C^\alpha:=[\SC^\alpha_{ij}]_{i, j=1,...,n}\in
\mathcal{M}_{n\times n}(\R) \qquad \forall\,\,\alpha\in I\vv=\{\DH+1, ...,
n\}$.\end{itemize}\end{no}
\begin{oss}\label{ossconst}It is important to observe that \eqref{notabilmente} immediately implies that the matrices just defined are the only ones which can be non zero.\end{oss}

Let us define the left-invariant co-frame $\underline{\omega}:=\{\omega_i:i=1,...,n\}$ dual to
$\underline{X}$, i.e. $\omega_i=X_i^\ast$ for every $i=1,...,n$.
The {\it left-invariant 1-forms} $\omega_i$ for $i=1,...,n$ are
uniquely determined by the condition
$\omega_i(X_j)=\langle X_i,X_j\rangle=\delta_i^j\,\,\forall\,\,i,
j=1,...,n$,  where $\delta_i^j$ denotes  Kronecker delta.

\begin{Defi}\label{parzconn}
We shall denote by $\nabla$ the  unique  left-invariant
Levi-Civita connection on $\GG$ associated  with the left-invariant metric
$g=\langle\cdot, \cdot\rangle$. Moreover, if
$X, Y\in\XH:=\cin(\GG,\HH)$, we shall set \[\gc_X Y:=\PH(\nabla_X
Y).\]
\end{Defi}

Let  
$\underline{X}=\{X_1,...,X_{n}\}$ be the global left-invariant frame on $\TG$. Then, it turns out that
\begin{equation}\label{milkoszul}\nabla_{X_i} X_j =
\frac{1}{2}\sum_{r=1}^n\left( \SC_{ij}^r  - \SC_{jr}^i +
\SC_{ri}^j\right) X_r\qquad \forall\,\,i,\,j=1,...,n;\end{equation}see, for instance, Milnor's paper \cite{Milnor}, Section 5, pp. 310-311. Furthermore, we stress that $\gc$ is a partial
connection, called {\it horizontal $\HH$-connection};  
see \cite{GE1} or \cite{Koiller}; see also \cite{22} and references therein. Using Definition
\ref{parzconn} together with \eqref{milkoszul} and \eqref{notabilmente}, it is not difficult to show the following:
\begin{itemize}
\item $\gc$ is  flat, i.e.
$$\gc_{X_i}X_j=0\qquad \forall\,\,i,j\in I\cc;$$
 \item 
 $\gc$ is
compatible with the sub-Riemannian metric $g\cc$, i.e.
$$X\langle Y, Z \rangle=\langle \gc_X Y, Z \rangle
+ \langle  Y, \gc_X Z \rangle\qquad \forall\,\, X, Y, Z\in
\XH$$
\item $\gc$ is  torsion-free, i.e.
$$\gc_X Y - \gc_Y X-\PH[X,Y]=0\qquad \forall\,\, X, Y\in \XH.$$
\end{itemize}

\begin{Defi}\label{graddivjac}
If $\psi\in\cin({\GG})$ we define the  horizontal gradient of
$\psi$  as the  unique horizontal vector field $\dg \psi$ such
that $\langle\dg \psi,X \rangle= d \psi (X) = X
\psi$ for every $X\in \XH$.  The  horizontal
divergence of $X\in\XH$, $\divh X$, is defined, at each point
$x\in \GG$, by
$$\divh X(x):= \mathrm{Trace}\left(Y\longrightarrow \gc_{Y} X
\right)(x)\quad(Y\in \HH_x).$$For any $Y=\sum_{j\in I\cc}y_jX_j\in\XH$, we denote by $\mathcal J\cc Y$  the  horizontal Jacobian  matrix of $Y$, i.e. $$\mathcal J\cc Y:=\left[X_i(y_j)\right]_{j, i\in I\cc}.$$
\end{Defi}

\begin{es}[Heisenberg group $\mathbb H^n\,(n\geq 1)$]\label{hng}The Lie algebra
$\mathfrak{h}_n\cong\R^{2n+1}$ of the $n$-th Heisenberg group  $\mathbb H^n$ can be described by means of a left-invariant frame
$ \underline{Z}:=\{X_1,Y_1,...,X_i,Y_i,...,X_n,Y_n,T\}$, where, at each $p=\exp(x_1,y_1,x_2,y_2,...,x_n,y_n, t)\in\mathbb{H}^n$, we have set:
$X_i(p):=\frac{\partial}{\partial x_i} -
\frac{y_i}{2}\frac{\partial}{\partial t}$, $Y_i(p):=\frac{\partial}{\partial y_i} +
\frac{x_i}{2}\frac{\partial}{\partial t}$  for every $i=1,...,n$;
$T(p):=\frac{\partial}{\partial t}$.
One has $[X_i,Y_i]=T$ {for every} $i=1,...,n$, and all other commutators
vanish, so that $T$ is the {\it center} of
$\mathfrak{h}_n$ and
 $\mathfrak{h}_n$ turns out to be a nilpotent and stratified Lie algebra of
step 2, i.e. $\mathfrak{h}_n=\HH\oplus \HH_2$. The
  structural constants of $\mathfrak{h}_n$ are
described by the skew-symmetric $(2n\times 2n)$-matrix
\begin{center}
 $C\cc^{2n+1}:=\left|
\begin{array}{ccccc}
  0 & 1 &    \cdot &  0 &  0 \\
  -1 & 0 &  \cdot &  0 &  0 \\
  \cdot & \cdot & \cdot & \cdot & \cdot \\
  0 & 0 &   \cdot & 0 & 1 \\
  0 & 0 &  \cdot & -1 & 0
\end{array}%
\right| $.\end{center}

\end{es}

\subsection{Hypersurfaces}\label{hgxzc} The (Riemannian)
left-invariant volume form of any Carnot group $\GG$ is defined as
$\sigma^n\rr:=\bigwedge_{i=1}^n\omega_i\in
\bigwedge^n(\TT^\ast\GG)$. By integration of  the $n$-form
$\sigma^n\rr$, one obtains the Haar measure of $\GG$, which equals
the push-forward of the $n$-dimensional Lebesgue measure
$\mathcal{L}^n$ on $\gg\cong\Rn$. The symbols
$\mathcal{H}_{CC}^{s}$, $\mathcal{H}_{Eu}^{s}$ will denote the
intrinsic CC $s$-dimensional Hausdorff measure and the Euclidean
$s$-dimensional Hausdorff measure, respectively. (Sometimes we
will use the notation $\sigma^n\rr=\Vol$). Let $S\subset\GG$ be a
 hypersurface (i.e. a codimension
1 submanifold of $\GG$) of class $\mathbf{C}^i$ $(i\geq 1)$. Let $\nu$ denote the (Riemannian) unit
normal vector along $S$. Then $x\in S$ is a {\it characteristic
point} if and only if $\dim\,\HH_x = \dim (\HH_x \cap \TT_x S)$.
The {\it characteristic set} of $S$ is given by $ C_S:=\{x\in S :
\dim\,\HH_x = \dim (\HH_x \cap \TT_x S)\}$. In other words, a
point $x\in S$ is non-characteristic (hereafter abbreviated as NC)
if  and only if $\HH$ is transversal to $S$ at $x$. Hence, one has
$C_S:=\{x\in S : |\PH\nu(x)|= 0\}$, where $\PH$ denotes
orthogonal projection onto $\HH$. It is of fundamental importance
that  the $(Q-1)$-dimensional CC Hausdorff measure of the
characteristic set $C_S$  vanishes, i.e.
$\mathcal{H}_{CC}^{Q-1}(C_S)=0$; see, for instance, Theorem 6.6.2 in \cite{19}. 
We also stress that if $S$ is a hypersurface of class $\cont^2$, then  precise estimates of the Riemannian Hausdorff dimension of $C_S$  can be found in \cite{Bal3}; see also \cite{4} for the case of the Heisenberg group $\mathbb H^n\, (n\geq 1)$.

The
$(n-1)$-dimensional Riemannian measure along $S$ is defined by
integration of the $(n-1)$-differential form $\sigma^{n-1}\rr\res
S:=(\nu\LL\sigma^{n}\rr)|_{S},$ where  $\LL$ denotes the
``contraction'' operator on differential
 forms; see \cite{c}. We recall that $\LL:
{\bigwedge}^k(\TT^\ast\GG)\rightarrow{\bigwedge}^{k-1}(\TT^\ast\GG)$
is defined, for $X\in\TG$ and
$\alpha\in{\bigwedge}^k(\TT^\ast\GG)$, by setting
$(X \LL \alpha)
(Y_1,...,Y_{k-1}):=\alpha(X,Y_1,...,Y_{k-1})$.

 At each NC point $x\in S\setminus C_S$ the {\it unit $\HH$-normal}  is defined as $\nn:
=\frac{\PH\nu}{|\PH\nu|}$. Similarly to the Riemannian case, we define an $(n-1)$-differential form
 $\per\in\bigwedge^{n-1}(\TT^\ast S)$
by setting $$\per \res S:=(\nn \LL
\sigma^n\rr)|_S.$$  By integration of $\per\res S$, one gets a left-invariant and $(Q-1)$-homogeneous measure, which is called \it $\HH$-perimeter measure. \rm  This measure can be extended
to the whole of $S$ by setting $\per\res C_{S}= 0$. Note that
$\per \res S = |\PH \nu |\,\sigma^{n-1}\rr\,
\res S$. Furthermore, denoting by $\mathcal{S}_{CC}^{Q-1}$ the $(Q-1)$-dimensional
spherical intrinsic CC Hausdorff measure (i.e. associated with the CC-distance $\dc$), then $$\per(S\cap B)=k(\nn)\,\mathcal{S}_{CC}^{Q-1}\res
 ({S}\cap B)\qquad\forall\,\,B\in \mathcal{B}or(\GG),$$where the density-function $k(\nn)$, called {\it metric factor},
explicitly depends on $\nn$ and $\dc$; see \cite{19}.

At each NC point $x\in S\setminus C_S$, the {\it horizontal tangent bundle}
$\HS:=\HH\cap \TS\subset\TS$ and the {\it horizontal normal bundle} $\nn S\subset \HH$
split the horizontal bundle $\HH$ into an orthogonal direct sum,
i.e. $\HH= \nn \oplus \mathit{H}S$. The stratification of $\gg$ induces a
stratification of
$\TS:=\oplus_{i=1}^k\HH_iS,$ where we have set $\HS:=\HH_1S$; see \cite{18}.
Note that at any characteristic point $x\in C_S$ one has $\HH_x=\HH_xS$, so that $$\dim(\HH_xS)=\left\{\begin{array}{ll}\DH-1\quad\,\mbox{if}\,\,x\in S\setminus C_S\\\DH
\,\qquad\,\,\,\, \mbox{if}\,x\in C_S\end{array}\right..
$$

\begin{no}\label{1notlne22}Throughout this paper, we  denote by $\P\ss:\TS\longrightarrow\HS$ the orthogonal projection map from $\TS$ onto $\HS$.
\end{no}

Now let $S\subset\GG$ be a hypersurface of class $\cont^2$ and let
$\tsc$ denote the induced connection on $S$ from $\nabla$. The
tangential connection $\tsc$ induces a partial connection on $\HS$
defined by
$$\gs_XY:=\P\ss\left(\tsc_XY\right) \qquad\,\forall\,\,X,Y\in\XX^1(\HS):=\cont^1(S,\HS).$$
 It turns out that$$\gs_XY=\gc_X Y-\langle\gc_X
Y,\nn\rangle\,\nn\qquad\mbox{for every}\,\,\,X,Y\in\XX^{1}(\HS);$$see \cite{22}.

\begin{Defi}[see \cite{22}]\label{defiophs}
We call $\HS$-{ gradient} of $\psi\in \cont^1({S})$ the unique
horizontal tangent vector field $\qq\psi$ such that
$$\langle\qq\psi,X \rangle= d \psi (X) = X
\psi\qquad\forall\,\,X\in\XX^1(\HS).$$We denote by $\div\ss$
the $\HS$-divergence, i.e. if $X\in\XX^1(\HS)$ and $x\in
{S}$, then
$$\div\ss X (x) := \mathrm{Trace}\big(Y\longrightarrow
\gs_Y X \big)(x)\quad\,(Y\in \HH_xS).$$ The {\rm $\HS$-Laplacian}
$\Delta\ss$ is the 2nd order differential operator defined as
\[\Delta\ss\psi := \div\ss(\qq\psi)\quad\mbox{for every}\,\,\psi\in
\cont^2(S).\]The horizontal 2{nd}
fundamental form of ${S}\setminus C_S$ is the map given by
\begin{eqnarray*}{B\cc}(X,Y):=\left\langle\gc_X Y, \nn\right\rangle \qquad\forall\,\, X, Y\in\XX^1(\HS).\end{eqnarray*}
The horizontal mean curvature $\MS$ is the trace of ${{B}\cc}$,
i.e. $\MS:={\rm
Tr}B\cc=-\div\cc\nn$.\end{Defi}It is worth observing that the $\HS$-connection admits, in general, a non-zero torsion because $B\cc$ is {\it not symmetric}; see \cite{22}.

\begin{Defi}
 Let $\UU\subseteq S$ be an open set. We shall denote by $\cont^i\ss(\UU),\,(i=1, 2)$ the space of functions whose
$\HS$-derivatives up to $i$-th order are continuous on $\UU$.
\end{Defi}

We stress that the previous definitions concerning the horizontal 2nd fundamental form $B\cc(\cdot,\cdot)$ and the $\HS$-connection can also be reformulated by using the function space $\cont^i\ss(\UU),\,(i=1, 2)$ and, more precisely, by replacing $\XX^1(\HS) =\cont^1(S,\HS)$ with $\XX\ss^1(\HS):=\cont\ss^1(S,\HS)$.\\

Let $S\subset\GG$ be a hypersurface of class $\cont^i\,\,(i\geq 1)$  and let $\nu$ be the outward-pointing unit normal vector field along $S$. We need to define some important geometric objects. To this end, we first note that
 $\nu=\P\cc\nu+\P\vv\nu$. By using the left-invariant frame $\underline{X}=\{X_1,...,X_{n}\}$,  we see that $\P\vv\nu=\sum_{\alpha\in I\vv} \nu_\alpha X_\alpha$, where $\nu_\alpha:=\langle\nu, X_\alpha\rangle$; see  Notation \ref{1notlne1}.\begin{no}\label{notaziaombaretc} 
Hereafter we shall set
\begin{itemize}\item $\varpi_\alpha:=\frac{\nu_\alpha}{|\PH\nu|}\qquad\forall\,\,\alpha\in
I\vv$;\item $\varpi:=\sum_{\alpha\in I\vv}\varpi_\alpha
X_\alpha$;\item  $C\cc:=\sum_{\alpha\in {I\cd}}
\varpi_\alpha\,C^\alpha\cc$;\end{itemize}
see, for instance, Notation \ref{noconstmatr} and Remark \ref{ossconst}.\end{no}

\subsection{Other tools}\label{othy}Let $S\subset\GG$ be a hypersurface  of class $\cont^i\,\,(i\geq 1)$. Let $\partial S$ be a $(n-2)$-dimensional submanifold of $S$ of class  $\mathbf{C}^1$, oriented by the outward pointing unit normal vector
$\eta\in\TS\cap{\rm Nor}(\partial S)$.
 We shall denote by $\sigma^{n-2}\rr$ the Riemannian measure on $\partial S$,
i.e. $\sigma^{n-2}\rr\res{\partial S}=(\eta\LL\sigma^{n-1}\rr)|_{\partial S}$. In particular, note that $(X\LL\per)|_{\partial S}=\langle X, \eta\rangle
  |\PH\nu|\, \sigma^{n-2}\rr\res{\partial S}$ for every $X\in\XX^1(\TS):=\cont^1(S,\TS)$. The {\it unit
$\HS$-normal} along $\partial S$  is given by
$\eta\ss:=\frac{\P\ss\eta}{|\P\ss\eta|}$. In this way, we can define a  homogeneous $(n-2)$-dimensional measure
$\nis\in\bigwedge^{n-2}(\TT^\ast\partial S)$ by setting
  ${\nis}\res{\partial S}:=
 \left(\eta\ss\LL\per\right)\big|_{\partial S}.$
It follows that $${\nis}\res{\partial S}=
|\PH\nu|\,|\P\ss\eta|\,\sigma^{n-2}\rr\res{\partial
S}$$and that
$(X\LL\per)|_{\partial S}=\langle X, \eta\ss\rangle\,
  {\nis}\res{\partial S}$ for every $X\in\XX^1(\HS):=\cont^1(S,\HS)$.

 Now let $\nu\wedge\eta\in\Lambda^2(\TS)$ be a unit $2$-vector orienting $\partial S$, where $\nu\in{\rm Nor}(S)$ and $\eta\in \TS\cap{\rm Nor}(\partial S)$. Then, the {\it
characteristic set} of ${\partial S}$ is defined as
$C_{\partial S}:=\{p\in{\partial S}: |\P\cc(\nu\wedge\eta)|=0\}$, where the orthogonal projection operator $\P\cc$ is extended to $2$-vectors in the standard way.
 
\begin{Prop}\label{1coar1} Let
$S\subset\GG$ be a compact hypersurface of class $\mathbf{C}^1$ and let
$\phi\in\cont^1\ss(S)$. Then
\begin{eqnarray}\label{1coar}\int_{S}|\qq\phi(x)|\,\per(x)=\int_{\R}\nis\{\phi^{-1}[s]\cap
S \}ds.\end{eqnarray}
\end{Prop}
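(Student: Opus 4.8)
The plan is to reduce \eqref{1coar} to the classical (weighted) Coarea formula on the $(n-1)$-dimensional Riemannian manifold $(S,g)$, using the structural identities that relate the homogeneous measures $\per$, $\nis$ to the Riemannian measures $\sigma^{n-1}\rr$, $\sigma^{n-2}\rr$. Recall from Section \ref{hgxzc} that $\per\res S=|\PH\nu|\,\sigma^{n-1}\rr\res S$, so that the $\HH$-perimeter is just the Riemannian area weighted by $|\PH\nu|$. Moreover, for \emph{any} $\cont^1$ codimension-one submanifold $N\subset S$ oriented by its $\TS$-unit normal $\eta\in\TS\cap{\rm Nor}(N)$, the same computation that gives $\nis\res\partial S=|\PH\nu|\,|\P\ss\eta|\,\sigma^{n-2}\rr\res\partial S$ applies verbatim and yields $\nis\res N=|\PH\nu|\,|\P\ss\eta|\,\sigma^{n-2}\rr\res N$ (one uses $\langle\eta\ss,\eta\rangle=|\P\ss\eta|$ together with the identity $(X\LL\per)|_N=\langle X,\eta\rangle\,|\PH\nu|\,\sigma^{n-2}\rr\res N$).

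Next I would express the horizontal data through the full Riemannian tangential gradient $\grad^S\phi\in\TS$, characterized by $\langle\grad^S\phi,X\rangle=X\phi$ for all $X\in\TS$. Since $\HS\subset\TS$ and $\P\ss$ is the orthogonal projection onto $\HS$, the defining property of the $\HS$-gradient in Definition \ref{defiophs} forces $\qq\phi=\P\ss(\grad^S\phi)$. By the structure of the Coarea formula, for a.e.\ $s\in\R$ the level set $N_s:=\phi^{-1}(s)\cap S$ is $(n-2)$-rectifiable and carries, at $\sigma^{n-2}\rr$-a.e.\ of its points, the tangential unit normal $\eta=\grad^S\phi/|\grad^S\phi|$; hence $\P\ss\eta=\qq\phi/|\grad^S\phi|$ and $|\P\ss\eta|=|\qq\phi|/|\grad^S\phi|$. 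Substituting into the density identity for $\nis$ gives, for a.e.\ $s$,
\[\nis\res N_s=|\PH\nu|\,\frac{|\qq\phi|}{|\grad^S\phi|}\,\sigma^{n-2}\rr\res N_s.\]

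Now I would invoke the weighted Riemannian Coarea formula on $(S,g)$: for every nonnegative Borel weight $w$,
\[\int_S w\,|\grad^S\phi|\,\sigma^{n-1}\rr=\int_\R\left(\int_{N_s}w\,\sigma^{n-2}\rr\right)ds.\]
Choosing $w:=|\PH\nu|\,|\qq\phi|/|\grad^S\phi|$ (extended by $0$ on the critical set $\{\grad^S\phi=0\}$, where $\qq\phi=\P\ss\grad^S\phi$ vanishes as well), the left-hand side becomes $\int_S|\PH\nu|\,|\qq\phi|\,\sigma^{n-1}\rr=\int_S|\qq\phi|\,\per$ by the first identity, while the right-hand side becomes $\int_\R\nis(N_s)\,ds=\int_\R\nis\{\phi^{-1}[s]\cap S\}\,ds$ by the displayed formula for $\nis\res N_s$. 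This is exactly \eqref{1coar}.

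The delicate points, rather than the algebra, are measure-theoretic. The set $\{\grad^S\phi=0\}$ contributes nothing to either side, and the characteristic set $C_S$ is $\mathcal{H}_{CC}^{Q-1}$-negligible and carries $|\PH\nu|=0$, so it drops out of both the $\per$- and the $\nis$-integrals; these two facts are what let me restrict to the regular values of $s$, for which $N_s$ is a genuine rectifiable hypersurface with the stated normal. The only real obstacle is the regularity reduction: one must make sense of $\grad^S\phi$ and of the level sets for $\phi\in\cont^1\ss(S)$. When $\phi$ is $\cont^1$ on $S$ in the Riemannian sense the argument above is immediate; in general I would recover the formula by approximating in the $\HS$-data and passing to the limit using the continuity of the two sides, precisely along the lines of the Riemannian proofs of Cheeger and Yau recalled in the Introduction. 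Everything else is the bookkeeping of the single weight $|\PH\nu|$.
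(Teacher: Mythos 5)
Your proof is correct and follows essentially the same route as the paper's: both reduce \eqref{1coar} to the weighted Riemannian Coarea formula on $S$ with exactly the same weight $|\PH\nu|\,|\qq\phi|/|\grad\ts\phi|$, identify the level-set normal as $\eta=\grad\ts\phi/|\grad\ts\phi|$ so that $|\P\ss\eta|=|\qq\phi|/|\grad\ts\phi|$, and convert $|\PH\nu|\,\sigma^{n-1}\rr$ into $\per$ and $|\PH\nu|\,|\P\ss\eta|\,\sigma^{n-2}\rr$ into $\nis$. Your additional attention to the critical set, the characteristic set, and the $\cont^1\ss$ versus $\cont^1$ regularity issue is more careful than the paper's terse write-up, but it does not change the argument.
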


\begin{proof}
This formula follows from the Riemannian Coarea Formula; see \cite{b}, \cite{Ch4} or \cite{25}. 
We have 
\begin{equation*}\int_{S}\phi(x)\,|
\grad\ts\varphi(x)|\,\sigma^{n-1}\rr(x)=\int_{\R}ds
\int_{\varphi^{-1}[s]\cap
S}\phi(y)\,\sigma^{n-2}\rr(y)\end{equation*} for every $\phi\in
L^1(S,\sigma^{n-1}\rr)$; see
 \cite{b},  \cite{Ch4}. Choosing
 $\phi= \frac{|\grad\ss\varphi|}{|\grad\ts\varphi|}\,|\P\cc\nu|$,  yields
\begin{eqnarray*}\int_{S}\phi\,|
\grad\ts\varphi|\,\sigma^{n-1}\rr=\int_{S} \frac{|\grad\ss\varphi|}{|\grad\ts\varphi|}
|\grad\ts\varphi|\underbrace{|\P\cc\nu|\,\sigma^{n-1}\rr}_{=\per}=\int_{S} |\qq\varphi|\,\per.
\end{eqnarray*}The (Riemannian) unit normal $\eta$ along $\varphi^{-1}[s]$ is given by
$\eta=\frac{\grad\ts\varphi}{|\grad\ts\varphi|}$. Hence
$|\P\ss\eta|=\frac{|\grad\ss\varphi|}{|\grad\ts\varphi|}$ and it
turns out that
\begin{eqnarray*}\int_{\R}ds
\int_{\varphi^{-1}[s]\cap S}\phi(y)\,\sigma^{n-2}\rr&=&\int_{\R}ds
\int_{\varphi^{-1}[s]\cap
S} \frac{|\grad\ss\varphi|}{|\grad\ts\varphi|}|\P\cc\nu|\,\sigma^{n-2}\rr\\&=&\int_{\R}ds\int_{\varphi^{-1}[s]\cap
S} \underbrace{|\P\ss\eta||\P\cc\nu|\,\sigma^{n-2}\rr}_{=\nis}\\&=&\int_{\R}ds\int_{\varphi^{-1}[s]\cap
S} \,\nis.\end{eqnarray*}
\end{proof}

Below, we recall a basic integration by parts formula for horizontal vector fields; see \cite{22}.

\begin{Defi}\label{Deflh}Let $\lh:\XX\ss^1(\HS)\longrightarrow\cont(S)$  be
the 1st order
differential operator given by
\begin{eqnarray*}\lh X:=\div\ss X +  \langle C\cc\nn,
X\rangle \qquad
\forall\,\,X\in\XX\ss^1(\HS)\left( :=\cont^1\ss(S, \HS)\right) .\end{eqnarray*}Furthermore, let $\lll:\cont\ss^2(S)\longrightarrow\cont(S)$ be the 2nd order differential operator given by
\begin{eqnarray*}\lll\varphi:=\Delta\ss\varphi +  \langle C\cc\nn,
\qq\varphi\rangle
\qquad\forall\,\,\varphi\in\cont^2\ss(S);\end{eqnarray*}see Definition \ref{defiophs}  and  Notation \ref{notaziaombaretc}.
\end{Defi}
The horizontal matrix $C\cc$ is a key object,  related with the
 skew-symmetric part of the horizontal 2nd fundamental form $B\cc$. Note that  $\lh(\varphi X)=\varphi\lh X +
\langle\qq \varphi, X\rangle$   for every $X\in\XX\ss^1(\HS)$ and every $\varphi\in\cont\ss^1(S)$. Moreover, one has   $\lll \varphi=\lh(\qq
\varphi)$ for every $\varphi\in\cont\ss^2(S)$. These definitions are motivated by
Theorem 3.17, Corollary 3.18 and Corollary 3.19 in \cite{22}.

\begin{teo}[see \cite{22}]\label{GD}Let $S$ be a
compact NC hypersurface of class $\cont^2$ with
 boundary $\partial S$ of class $\cont^1$. Then
\begin{eqnarray}\label{hparts} \int_{S}\lh X\,\per=-\int_{S}\MS\langle X, \nn\rangle\,\per +
\int_{\partial S}\langle
X,\eta\ss\rangle\,\nis\qquad \forall\,\,X\in\XX^1(\HH).\end{eqnarray}\end{teo}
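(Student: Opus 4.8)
The plan is to split an arbitrary $X\in\XX^1(\HH)$ into its $\HH$-normal and $\HS$-tangential components along $S$ and to reduce \eqref{hparts} to a weighted divergence theorem for the tangential component. At each NC point the orthogonal splitting $\HH=\nn\oplus\HS$ gives
\[X=\langle X,\nn\rangle\,\nn+X^{\HS},\qquad X^{\HS}:=X-\langle X,\nn\rangle\,\nn.\]
Using $\langle X,\nu\rangle=|\PH\nu|\,\langle X,\nn\rangle$ (valid because $X$ is horizontal) and $\langle\nn,\nu\rangle=|\PH\nu|$ one checks $\langle X^{\HS},\nu\rangle=0$, so $X^{\HS}$ is tangent to $S$; since $S$ is $\cont^2$ and NC, $\nn$ is $\cont^1$ and $|\PH\nu|$ is bounded below on the compact $S$, whence $X^{\HS}\in\XX\ss^1(\HS)$.

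First I would dispose of the normal component pointwise, reading $\div\ss$ on a horizontal field through the trace $\div\ss Z=\sum_i\langle\gc_{E_i}Z,E_i\rangle$ over an orthonormal frame $E_1,\dots,E_{\DH-1}$ of $\HS$. Metric compatibility of $\gc$ together with $\langle E_i,\nn\rangle=0$ gives $\langle\gc_{E_i}E_i,\nn\rangle=-\langle\gc_{E_i}\nn,E_i\rangle$, so that $\sum_i\langle\gc_{E_i}\nn,E_i\rangle=-\mathrm{Tr}\,B\cc=-\MS$. As $\nn\perp E_i$, the derivative term drops and
\[\div\ss\big(\langle X,\nn\rangle\,\nn\big)=\langle X,\nn\rangle\sum_i\langle\gc_{E_i}\nn,E_i\rangle=-\langle X,\nn\rangle\,\MS.\]
Because the matrices $C^\alpha\cc$ are skew-symmetric, $\langle C\cc\nn,\nn\rangle=0$ and hence $\langle C\cc\nn,X\rangle=\langle C\cc\nn,X^{\HS}\rangle$. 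Adding these identities yields $\lh X=-\langle X,\nn\rangle\,\MS+\lh(X^{\HS})$, reducing \eqref{hparts} to the purely tangential statement
\[\int_S\lh Y\,\per=\int_{\partial S}\langle Y,\eta\ss\rangle\,\nis,\qquad Y:=X^{\HS}\in\XX\ss^1(\HS).\]

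For this tangential identity I would use Stokes' theorem on $S$. Regarding $\per$ as a top-degree form on the $(n-1)$-manifold $S$ we have $d\per=0$, so Cartan's formula collapses to $d(Y\LL\per)=\Lie_Y\per$. Stokes' theorem and the boundary relation $(Y\LL\per)|_{\partial S}=\langle Y,\eta\ss\rangle\,\nis\res{\partial S}$ recalled in Section \ref{othy} then give
\[\int_{\partial S}\langle Y,\eta\ss\rangle\,\nis=\int_S d(Y\LL\per)=\int_S\Lie_Y\per.\]
Writing $\Lie_Y\per=\varphi\,\per$ for the function $\varphi\in\cont(S)$ expressing the divergence of $Y$ relative to the density $\per$, the proof closes once one shows $\varphi=\lh Y$.

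The crux---and the expected main obstacle---is precisely this identity $\varphi=\div\ss Y+\langle C\cc\nn,Y\rangle$. Since $\per=|\PH\nu|\,\sigma^{n-1}\rr$, one has $\varphi=\div_S Y+Y(\log|\PH\nu|)$, where $\div_S$ is the Riemannian tangential divergence on $S$. Splitting the trace defining $\div_S Y$ along $\TS=\HS\oplus\VS$ separates off $\div\ss Y$ and leaves the vertical contribution $\sum_V\langle\nabla_V Y,V\rangle$; by the Koszul formula \eqref{milkoszul} and the grading property \eqref{notabilmente}, these vertical terms combine with $Y(\log|\PH\nu|)$ and reassemble, through the structural constants $\SC^\alpha_{ij}$, into $\langle C\cc\nn,Y\rangle$. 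This weighted computation, in which the horizontal matrix $C\cc$ genuinely appears and must be tracked carefully, is the technical heart of the statement and is carried out in \cite{22} (cf. Theorem 3.17 and Corollaries 3.18--3.19 there).
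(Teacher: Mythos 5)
Your reduction steps are correct and worth keeping: the splitting $X=\langle X,\nn\rangle\,\nn+X^{\HS}$ at NC points, the check that $X^{\HS}$ is tangent to $S$, the computation $\div\ss\big(\langle X,\nn\rangle\,\nn\big)=-\langle X,\nn\rangle\,\MS$ via metric compatibility of $\gc$, the use of skew-symmetry of the $C^\alpha\cc$ to get $\langle C\cc\nn,\nn\rangle=0$, and the Cartan--Stokes step $\int_{\partial S}\langle Y,\eta\ss\rangle\,\nis=\int_S\Lie_Y\per$ are all sound (compactness plus the NC hypothesis keep $|\PH\nu|$ bounded below, so every object involved is $\cont^1$ up to $\partial S$). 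For comparison, note that the paper offers no proof to measure this against: Theorem \ref{GD} is quoted verbatim from \cite{22}, and the only in-text justification is the remark that follows it, which obtains \eqref{hparts} as the special case $X\in\XX^1(\HH)$ of the first variation formula \eqref{fva} of \cite{27} --- a genuinely different route from yours.

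The genuine gap is that the step carrying all the content of the theorem --- the identity $\Lie_Y\per=\left(\div\ss Y+\langle C\cc\nn,Y\rangle\right)\per$ for tangential horizontal $Y$ --- is not proved but referred back to \cite{22}. Since the statement you are asked to prove \emph{is} essentially Theorem 3.17/Corollary 3.18 of \cite{22}, outsourcing exactly this identity makes the attempt circular: everything you actually carry out (the normal/tangential bookkeeping) is routine, while the emergence of the matrix $C\cc$ from the Lie derivative of the weighted measure $\per=|\PH\nu|\,\sigma^{n-1}\rr$ is the whole theorem. Moreover, the sketch you offer for this step is flawed: the orthogonal complement of $\HS$ in $\TS$ is \emph{not} $\VV\cap\TS$. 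Whenever $\P\vv\nu\neq 0$ one has $\dim(\HS)+\dim(\VV\cap\TS)=(\DH-1)+(n-\DH-1)=n-2<n-1=\dim\TS$, and the complement of $\HS$ in $\TS$ is spanned by tangential projections of vertical vectors, which carry nonzero horizontal parts (in $\mathbb{H}^1$, for instance, by $T-\langle T,\nu\rangle\nu$). Tracking precisely these mixed directions --- the term $Y(\log|\PH\nu|)$ against the non-tangential trace terms, via \eqref{milkoszul} and \eqref{notabilmente} --- is where $\langle C\cc\nn,Y\rangle$ actually comes from, and that computation still has to be done for your proposal to be a proof.
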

 \begin{oss}\label{osshsomm}We note that, in general, $\MS\notin L^1_{loc}(S; \sigma\rr^{n-1})$; see \cite{gar2}. However, it is always true that $\MS\in L^1_{loc}(S; \per)$; see, for instance, \cite{27}. 
\end{oss}

\begin{oss}
Let $S\subset\GG$ be a hypersurface of class $\cont^2$ and $\nu$ the outward-pointing unit normal vector along $S$. Let For any $X\in\XG$ let us set $X\ope:=\langle X,\nu\rangle\nu$ and $X\ot:=X-X\op$ to denote the  Riemannian normal and tangential components of $X$ at any point of $S$. We would like to stress that formula \eqref{hparts} can be seen as a particular case of a  general integral formula, the so-called 1st variation formula of the $\HH$-perimeter. More precisely, the 1st variation formula is given by\begin{equation}\label{fva}I_S(X,\per)=\int_{S}\left(-\MS\langle X\op,\nu\rangle +\div\ts\left( X\ot|\PH\nu|-\langle X\op,\nu\rangle\nn\ot \right)\right)\,\sigma\rr^{n-1}\end{equation} where $I_S(X,\per)$ denotes the 1st derivative of the $\HH$-perimeter under a smooth variation of $S$ with initial velocity $X$; see Theorem 4.6 in \cite{27}. Formula \eqref{fva} also holds   if $C_S\neq \emptyset$, but in this case we need to assume $\MS\in L^1_{loc}(S;\sigma\rr^{n-1})$.  We  observe that, in the case of the 1st Heisenberg group $\mathbb H^1$, this formula coincides with that of Ritor\'{e} and Rosales; see  \cite{32}, Lemma 4.3, p. 14.  Note that, if  $X=X\cc\in\XH$, then
\begin{eqnarray*}X\cc\ot|\PH\nu|-\langle X\cc\op,\nu\rangle\nn\ot&=&\left(X\cc-|\PH\nu|\langle X\cc,\nn\rangle\nu\right) |\PH\nu|-|\PH\nu|\langle X\cc ,\nu\rangle\left(\nn-|\PH\nu|\nu\right)\\&=&\left(X\cc-\langle X\cc ,\nu\rangle\nn\right)|\PH\nu|\\&=& \P\ss(X\cc)\,|\PH\nu|,\end{eqnarray*}where we have used the fact that $\nu=|\PH\nu|\nn+\sum_{\alpha\in I\vv}\nu_\alpha X_\alpha$ at each NC point. Finally,  inserting this  into  \eqref{fva}, we  obtain  an equivalent form of \eqref{hparts}.  In particular, for any $X\in\XH$ the function $\lh X$ turns out to be the Lie derivative of the differential  $(n-1)$-form $\per\res S$ with respect to the  initial velocity $X$ of a  smooth variation of $S$. Roughly speaking, this can be rephrased by saying that the differential  $(n-1)$-form  $(\lh X)\,\per \in\Lambda^{n-1}(\TT^\ast S)$ is the \textquotedblleft infinitesimal\textquotedblright 1st variation of $S$.
\end{oss}

Formula \eqref{hparts} holds true even if $C_S\neq \emptyset$, at least under suitable assumptions.
\begin{Defi}\label{adm}
Let $X\in\cont^1(S\setminus C_S, \HS)$ and set
 $\alpha_X:=(X\LL \per)|_S$. We say that $X$ is \rm admissible (for the horizontal divergence formula)  \it if the differential forms $\alpha_X$ and $d\alpha_X$ are continuous on all of $S$, or, more generally, if $\alpha,\,d\alpha\in L^\infty(S)$ and $\imath_S^\ast\alpha\in L^\infty(\partial S)$. We say that $\phi\in\cont^2\ss(S\setminus C_S)$  is \rm admissible \it if $\qq \phi$ is admissible for the horizontal divergence formula.
\end{Defi}
 We stress that, if the differential forms $\alpha_X$ and $d\alpha_X$ are continuous on all of $S$ (or, more generally, if $\alpha,\,d\alpha\in L^\infty(S)$ and $\imath_S^\ast\alpha\in L^\infty(\partial S)$, where $\imath_S:\partial M\longrightarrow\overline{M}$ is the natural inclusion), then Stokes formula holds true; see, for instance, \cite{Taylor}. This fact motivates the following:
\begin{corollario}\label{GDc}Let $S\subset\GG$ be a 
compact hypersurface of class $\cont^2$ with
boundary $\partial S$ of class $\cont^1$. Then\begin{itemize}
 \item[{\rm (i)}]$\int_{S}\lh X\,\per=
\int_{\partial S}\langle
X,\eta\ss\rangle\,\nis$ for every  admissible   $X\in\cont^1(S\setminus C_S, \HS)$;
 \item[{\rm (ii)}] $\int_{S}\lll \phi\,\per=
\int_{\partial S}\langle
\qq\phi,\eta\ss\rangle\,\nis$ for every  admissible   $\phi \in\cont^2\ss(S\setminus C_S)$; \item[{\rm (iii)}] if $\partial S=\emptyset$, then $-\int_{S}\varphi\lll
\varphi\,\per=\int_{S}|\qq\varphi|^2\,\per$ for every  $\varphi\in\cont^2\ss(S\setminus C_S)$ such that $\varphi^2$ is admissible. \end{itemize}
\end{corollario}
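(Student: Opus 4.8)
The plan is to deduce all three statements from a single pointwise identity combined with Stokes' theorem, the latter being available precisely because of the admissibility hypothesis (as noted in the Remark preceding the statement, following \cite{Taylor}).

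First I would treat (i). Fix an admissible $X\in\cont^1(S\setminus C_S,\HS)$ and set $\alpha_X:=(X\LL\per)|_S$ as in Definition \ref{adm}. Since $X$ is tangent to $S$ and $\per\res S$ is a top-degree $(n-1)$-form on the $(n-1)$-manifold $S$, Cartan's magic formula yields, on $S\setminus C_S$,
\[
\Lie_X(\per\res S)=d\big(X\LL(\per\res S)\big)+X\LL d(\per\res S)=d\alpha_X,
\]
because $d(\per\res S)=0$. On the other hand, the identification of $\lh X$ as the coefficient of the Lie derivative of $\per\res S$ (Theorem 3.17 and Corollaries 3.18--3.19 of \cite{22}, recalled in Definition \ref{Deflh}) gives $\Lie_X(\per\res S)=(\lh X)\,\per$ on $S\setminus C_S$. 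Here it is essential that $X\in\HS$, so that $\langle X,\nn\rangle=0$ and the horizontal mean curvature term appearing in the general divergence formula \eqref{hparts} disappears; equivalently, (i) is the tangential specialization of \eqref{hparts}. Thus $d\alpha_X=(\lh X)\,\per$ away from the characteristic set.

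Next I would pass to integrals. Since $X$ is admissible, $\alpha_X$ and $d\alpha_X$ are continuous (or lie in $L^\infty$) on all of $S$, so Stokes' theorem applies and gives $\int_S d\alpha_X=\int_{\partial S}\imath_S^\ast\alpha_X$. The characteristic set is negligible for the perimeter, namely $\per\res C_S=0$ and $\mathcal{H}_{CC}^{Q-1}(C_S)=0$; hence $\int_S d\alpha_X=\int_{S\setminus C_S}(\lh X)\,\per=\int_S(\lh X)\,\per$. The boundary integrand is evaluated by the formula $\imath_S^\ast\alpha_X=\langle X,\eta\ss\rangle\,\nis$ recorded in Section \ref{othy}. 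Combining these three facts proves (i). This step --- reconciling a vector field that may be singular on $C_S$ with a global application of Stokes' theorem --- is the only delicate point, and it is exactly what the admissibility condition together with the $\per$-nullity of $C_S$ is designed to handle.

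For (ii) I would apply (i) to $X:=\qq\phi$. If $\phi\in\cont^2\ss(S\setminus C_S)$ is admissible then, by Definition \ref{adm}, $\qq\phi$ is admissible and lies in $\cont^1(S\setminus C_S,\HS)$; moreover $\lll\phi=\lh(\qq\phi)$ by Definition \ref{Deflh}. Hence (i) yields $\int_S\lll\phi\,\per=\int_{\partial S}\langle\qq\phi,\eta\ss\rangle\,\nis$. Finally, for (iii), with $\partial S=\emptyset$, I would use the Leibniz rule $\lh(\varphi X)=\varphi\,\lh X+\langle\qq\varphi,X\rangle$ with $X=\qq\varphi$, obtaining $\lh(\varphi\,\qq\varphi)=\varphi\,\lll\varphi+|\qq\varphi|^2$ on $S\setminus C_S$. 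Observing that $\varphi\,\qq\varphi=\tfrac12\,\qq(\varphi^2)$, which is admissible because $\varphi^2$ is, I would apply (i) to $X=\tfrac12\,\qq(\varphi^2)$; since $\partial S=\emptyset$ the boundary term vanishes and $\int_S\lh(\varphi\,\qq\varphi)\,\per=0$. Integrating the Leibniz identity over $S$ then gives $-\int_S\varphi\,\lll\varphi\,\per=\int_S|\qq\varphi|^2\,\per$, as claimed.
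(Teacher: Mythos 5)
Your proposal is correct and follows exactly the route the paper intends: the Corollary is stated there without a written proof, being ``motivated'' by the preceding remark that admissibility (Definition \ref{adm}) makes Stokes' formula applicable, combined with the identity $d\alpha_X=(\lh X)\,\per$ off $C_S$ coming from the results of \cite{22} and the boundary pairing $\imath_S^\ast\alpha_X=\langle X,\eta\ss\rangle\,\nis$ recorded in Section \ref{othy}. Your deduction of (ii) from (i) via $X=\qq\phi$ and of (iii) via the Leibniz rule applied to $\varphi\,\qq\varphi=\tfrac12\,\qq(\varphi^2)$ is precisely the intended argument, so you have simply supplied the details the paper leaves implicit.
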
The last formula holds true even if $\partial S\neq\emptyset$, but for  compactly supported functions. Moreover, it can be shown that   $\varphi^2$ is admissible if and only if $\varphi\in\cont^2\ss(S\setminus C_S)\cap W\ss^{1,2}(S,\per)$ where we have set $W\ss^{1,2}(S,\per):=\{\varphi\in L^2(S,\per): |\qq\varphi|\in L^2(S,\per)\}$.
We also remark that any vector field  $X\in\cont^1(S,\HS)$ turns out to be admissible. Analogously, any  $\varphi\in\cont^2\ss(S)$ is admissible.

\begin{lemma}\label{calcolodivx}Let  $\x\cc:=\sum_{i\in I\cc}x_i X_i$ be the
{``horizontal position vector''} and let $\g$ denote its
component along the $\HH$-normal $\nn$, i.e. $\g:=\langle
\x\cc,\nn\rangle$. In the sequel, the function $\g$ will be called
``horizontal support function'' of $\x\cc$.  Then, we have:\begin{itemize}\item[$\rm (i)$] $\div\cc x\cc=\DH$;
\item[$\rm (ii)$] $\lh(x\ss)=(\DH-1)+ \g \MS +  \langle C\cc\nn,\x\ss\rangle$ at each NC point $x\in S\setminus C_S$, where $x\ss:=x\cc-\g\nn$.\end{itemize}\end{lemma}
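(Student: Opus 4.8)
The plan is to compute both divergences directly from their definitions, exploiting the flatness and metric-compatibility of the horizontal connection $\gc$ together with the graded structure of the left-invariant frame. The key observation, used in both parts, is the pointwise identity $\gc_W x\cc=W$ for every horizontal $W$.

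For (i), I would first establish
\[
\gc_{X_k}x\cc=X_k\qquad(k\in I\cc).
\]
Expanding $\gc_{X_k}x\cc=\sum_{i\in I\cc}\big((X_k x_i)X_i+x_i\,\gc_{X_k}X_i\big)$, the second group of terms vanishes because $\gc$ is flat on $\HH$, while $X_k x_i=\delta_{ki}$ for $k,i\in I\cc$: in exponential coordinates of the first kind each first-layer field has the form $X_k=\partial_{x_k}+\sum_{\mathrm{ord}(j)\ge 2}(\cdots)\,\partial_{x_j}$, so that its action on a first-layer coordinate $x_i$ reduces to $\partial_{x_k}x_i=\delta_{ki}$. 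By linearity this yields $\gc_W x\cc=W$ for all horizontal $W$, and tracing over $\HH$ gives $\div\cc x\cc=\sum_{k\in I\cc}\langle\gc_{X_k}x\cc,X_k\rangle=\DH$.

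For (ii), since $\HH=\nn\oplus\HS$ at an NC point, I note that $x\ss=x\cc-\g\nn=\P\ss x\cc\in\HS$, so that $\lh(x\ss)=\div\ss(x\ss)+\langle C\cc\nn,x\ss\rangle$ and everything reduces to computing $\div\ss(x\ss)$. I would fix a local orthonormal frame $e_1,\dots,e_{\DH-1}$ of $\HS$, completed by $\nn$ to an orthonormal frame of $\HH$, and use $\langle\gs_{e_a}x\ss,e_a\rangle=\langle\gc_{e_a}x\ss,e_a\rangle$ (the $\nn$-component of $\gs$ is killed by $\langle\cdot,e_a\rangle$). Splitting $x\ss=x\cc-\g\nn$ and invoking $\gc_{e_a}x\cc=e_a$ from step (i) produces the contribution $\sum_a\langle\gc_{e_a}x\cc,e_a\rangle=\DH-1$, while the Leibniz rule $\gc_{e_a}(\g\nn)=(e_a\g)\nn+\g\,\gc_{e_a}\nn$ together with $\langle\nn,e_a\rangle=0$ leaves the term $-\g\sum_a\langle\gc_{e_a}\nn,e_a\rangle$.

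The one point that needs care is the identification of this last partial trace with the mean curvature. Because $\gc$ is compatible with $g\cc$ and $|\nn|=1$, we have $\langle\gc_\nn\nn,\nn\rangle=\tfrac12\nn\langle\nn,\nn\rangle=0$; hence the trace of $Y\mapsto\gc_Y\nn$ over $\HS$ already coincides with the full horizontal trace, i.e. $\sum_{a=1}^{\DH-1}\langle\gc_{e_a}\nn,e_a\rangle=\div\cc\nn=-\MS$. Thus $\div\ss(x\ss)=(\DH-1)+\g\MS$, and restoring $\langle C\cc\nn,x\ss\rangle$ gives the assertion. The main (and only) obstacle is therefore not an estimate but getting these two bookkeeping facts right: the graded identity $X_k x_i=\delta_{ki}$, and the fact that dropping the $\nn$-direction from the horizontal trace of $\gc_\cdot\nn$ costs nothing, so that the mean-curvature term emerges with the correct sign.
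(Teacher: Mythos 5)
Your proof is correct and follows essentially the same route as the paper's: a direct computation of $\div\cc x\cc$ from the identity horizontal Jacobian plus flatness of $\gc$, and then $\div\ss x\ss$ via an orthonormal horizontal frame adapted to $S$ (the paper's $\tau_1=\nn,\tau_2,\dots,\tau_\DH$ is your $\nn,e_1,\dots,e_{\DH-1}$), splitting $x\ss=x\cc-\g\nn$ and using the Leibniz rule. The only difference is presentational: you make explicit two facts the paper uses silently, namely the pointwise identity $\gc_W x\cc=W$ and the fact that $\langle\gc_\nn\nn,\nn\rangle=0$ lets the partial trace over $\HS$ coincide with $\divh\nn=-\MS$.
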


\begin{proof}We have $\div\cc x\cc=\sum_{i=1}^\DH\langle\nabla_{X_i}x\cc,X_i\rangle=\sum_{i, j=1}^\DH\left( X_i(x_j)+ \langle\nabla_{X_i}X_j,X_i\rangle\right)=\sum_{i, j=1}^\DH \delta_i^j=h$, where $\delta_i^j$ denotes Kronecker's delta. Note that we have used $\mathcal J\cc(x\cc)={\bf Id}_h$ and $\langle\nabla_{X_i}X_j,X_i\rangle=0$ for all $i, j\in I\cc$; see Definition \ref{graddivjac} and formula \eqref{graddivjac}.
Furthermore, by definition, one has $\div\ss x\cc=\div\cc x\cc-\left\langle\nabla_{\nn} x\cc,\nn\right\rangle$. Hence 
$\div\ss x\cc=\DH-\left\langle \nn,\nn\right\rangle=\DH-1$. Furthermore, by definition, we have
\begin{equation}\label{mkmk1}\div\ss x\ss=\sum_{i=2}^\DH\langle\nabla_{\tau_i}\left(x\cc-\g\nn\right),\tau_i\rangle,\end{equation}where we have used an orthonormal  horizontal frame $\underline{\tau}\,\cc:=\left\lbrace\tau_1,...,\tau_h\right\rbrace$ in an open  neighborhood $U\subset\GG$ of $S$ such that $\tau_1(x)=\nn(x)$ at any $x\in S\setminus C_S$; see, for instance, Definition 3.4 in \cite{22}. Starting from \eqref{mkmk1}, we compute $$\div\ss x\ss=\sum_{i=2}^\DH\left(\langle\tau_i,\tau_i\rangle-\g\langle\gc_{\tau_i}\nn,\tau_{_i}\rangle \right)=(\DH-1)-\g\divh\nn=(\DH-1)+\g\MS$$for every  $x\in S\setminus C_S$. The thesis easily  follows from the definition of $\lh$.  
\end{proof}

A simple consequence of Corollary \ref{GDc} and Lemma  \ref{calcolodivx}. is given by the following:
\begin{corollario}[Minkowsky-type formula]\label{Mink}Let $S\subset\GG$ be a  
compact  hypersurface of class $\cont^2$. Let  $\x\cc=\sum_{i\in I\cc}x_i X_i$ be the
 horizontal position vector. Furthermore, set $\g=\langle
\x\cc,\nn\rangle$ and $\x\ss=\x-\g\nn$ for every $x\in S\setminus C_S$. Then
\[ \int_{S} \left(  (\DH-1)+ \g\MS +  \langle C\cc\nn,
\x\ss\rangle\right)  \per = 0.\]
\end{corollario}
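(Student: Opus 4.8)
The plan is to apply the horizontal divergence formula of Corollary \ref{GDc}(i) to the vector field $\x\ss=\P\ss\x\cc=\x\cc-\g\nn$, namely the orthogonal projection onto $\HS$ of the smooth horizontal position vector $\x\cc$, and then to read off its $\lh$-divergence from Lemma \ref{calcolodivx}(ii). Since $S$ is compact without boundary ($\partial S=\emptyset$), the boundary integral on the right-hand side of Corollary \ref{GDc}(i) is absent, so the formula collapses to $\int_S\lh(\x\ss)\,\per=0$.

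To carry this out I would first note that $\x\ss\in\cont^1(S\setminus C_S,\HS)$ and that it is bounded on $S\setminus C_S$: being an orthogonal projection of $\x\cc$ it satisfies $|\x\ss|\leq|\x\cc|$ pointwise, even though it need not extend continuously across the characteristic set $C_S$, where $\nn$ is undefined. By Lemma \ref{calcolodivx}(ii) its horizontal divergence equals
\[
\lh(\x\ss)=(\DH-1)+\g\MS+\langle C\cc\nn,\x\ss\rangle
\]
at every non-characteristic point. Substituting this into $\int_S\lh(\x\ss)\,\per=0$ and recalling that $\per\res C_S=0$ (so that the integrand, defined only off $C_S$, is tested against a measure that ignores the characteristic set) yields at once the asserted identity.

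The one genuine obstacle is the admissibility of $\x\ss$ in the sense of Definition \ref{adm}, which is what licenses the use of Corollary \ref{GDc}(i) when $C_S\neq\emptyset$. The $(n-2)$-form $\alpha=(\x\ss\LL\per)|_S$ is bounded because $\x\ss$ is; the delicate object is $d\alpha=\lh(\x\ss)\,\per$, since $\MS$ and $C\cc\nn$ both blow up at $C_S$. The key is that the weight $|\PH\nu|$ hidden in $\per=|\PH\nu|\,\sigma^{n-1}\rr$ absorbs these singularities: one has $C\cc\,|\PH\nu|=\sum_{\alpha}\nu_\alpha C^\alpha\cc$, which is bounded, so the contribution of $\langle C\cc\nn,\x\ss\rangle$ to $d\alpha$ remains bounded, while the term $\g\MS$ is kept under control by the $\cont^2$-regularity of $S$ together with the integrability $\MS\in L^1_{loc}(S;\per)$ recorded in Remark \ref{osshsomm}. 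Should the strict $L^\infty$ requirement of Definition \ref{adm} not be met outright, I would instead argue by exhaustion, applying the formula on $S\setminus U_\varepsilon$ for a shrinking neighbourhood $U_\varepsilon$ of $C_S$ and letting $\varepsilon\to0$, the emerging boundary contributions vanishing because $\mathcal{H}_{CC}^{Q-1}(C_S)=0$ and hence $\per(C_S)=0$.
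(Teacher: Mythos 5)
Your proof is correct and takes essentially the same route as the paper: apply Corollary \ref{GDc}(i) to the horizontal tangent field $\x\ss\in\cont^1(S\setminus C_S,\HS)$, read off $\lh(\x\ss)=(\DH-1)+\g\MS+\langle C\cc\nn,\x\ss\rangle$ from Lemma \ref{calcolodivx}(ii), and use the vanishing of the boundary term together with Remark \ref{osshsomm} to handle the characteristic set. Your additional discussion of admissibility (boundedness of $\sum_{\alpha}\nu_\alpha C^\alpha\cc$ and the control of $\g\MS$ with respect to $\per$) simply makes explicit what the paper leaves implicit in its one-line citation of Remark \ref{osshsomm} and Definition \ref{adm}.
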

\begin{proof}It is enough to apply  Corollary \ref{GDc} to the horizontal tangent vector field $\x\ss\in\cont^1(S\setminus C_S,\HS)$. Using Remark \ref{osshsomm} and Lemma  \ref{calcolodivx} the thesis easily follows. \end{proof}

\begin{Defi}[Eigenvalue problems for $\lll$]\label{epr} Let $S\subset\GG$ be a compact hypersurface of class $\cont^2$
without boundary.  Then we look for solutions of class $\cont^2\ss(S\setminus C_S)\cap W\ss^{1,2}(S,\per)$ to
the problem:
\begin{displaymath}
{\rm(P_1)}\,\,\,\left\{%
\begin{array}{ll}
  \,\,\,\,\,-\lll\psi=\lambda \, \psi; \\
  \int_S\psi\,\per  =0. \\
\end{array}%
\right.\end{displaymath}If $\partial S\neq {\emptyset}$, we look
for solutions of class  $\cont^2\ss(S\setminus C_S)\cap W\ss^{1,2}(S,\per)$ to
the problems:
\begin{displaymath}
{\rm(P_2)}\,\,\,\left\{%
\begin{array}{ll}
  \, -\lll\psi=\lambda \, \psi; \\
   \quad \,\,\psi|_{\partial S} =0; \\
\end{array}%
\right.\qquad
{\rm(P_3)}\,\,\,\left\{%
\begin{array}{ll}
    \,\,-\lll\psi=\lambda \,\psi; \\
    \,\frac{\partial\psi}{\partial\eta\ss}\big|_{\partial S} =0.
\end{array}%
\right.\end{displaymath}We explicitly remark that
$\frac{\partial\psi}{\partial\eta\ss}=\langle\qq\psi,\eta\ss\rangle$.

\end{Defi}

The problems (P$_1$), (P$_2$) and (P$_3$) generalize to our
context the classical {\it closed, Dirichlet and Neumann
eigenvalue problems} for the Laplace-Beltrami operator on
Riemannian manifolds; see \cite{Ch1, Ch3}.

Finally, we recall a recent general
result about the
size of horizontal tangencies to non-involutive distributions, which applies to our Carnot setting; see Theorem 4.5 in
\cite{Bal3}.
\begin{teo}[Generalized Derridj's Theorem]\label{baloghteo}Let $\GG$ be a $k$-step Carnot group.

\begin{itemize}\item[{\rm (i)}]If $S\subset \GG$ is a hypersurface
of class $\cont^2$,  the Euclidean-Hausdorff dimension of the
characteristic set $C_S$ of $S$ satisfies $\dim_{\rm
Eu-Hau}(C_N)\leq n-2.$\item[{\rm (ii)}]If
$\VV=\HH^\perp\subset\TG$ satisfies $\dim \VV\geq 2$ and $N\subset
\GG$ is a $(n-2)$-dimensional submanifold of class $\cont^2$, then
the Euclidean-Hausdorff dimension of the characteristic set $C_N$
of $N$ satisfies $\dim_{\rm Eu-Hau}(C_N)\leq n-3.$\end{itemize}
\end{teo}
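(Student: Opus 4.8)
The plan is to work locally and to reduce everything to the vanishing of horizontal derivatives of a defining function, exploiting only the first-layer bracket relations, which cost exactly two derivatives and are therefore legitimate for a $\cont^2$ datum. Fix $p\in C_S$ and write $S=\{f=0\}$ near $p$, with $f\in\cont^2$ and Euclidean gradient $\nabla f\neq0$. In the left-invariant frame $\underline{X}$ the horizontal gradient is $\dg f=\sum_{i\in I\cc}(X_if)X_i$, which is (up to a positive factor) $\PH\nu$; hence $C_S=\{x\in S:\ X_1f(x)=\dots=X_{\DH}f(x)=0\}$. At every such $x$ one has $\HH_x\subset\TT_xS$, so each horizontal $X_i$ is tangent to $S$ there; equivalently $\nu(x)\in\VV$, so $\nu(x)$ has vanishing component in $\HH_1=\HH$.

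Since $\nabla f(p)\neq0$ while its $\HH$-component vanishes, there is a smallest layer $j=j(p)\in\{2,\dots,k\}$ with $\P_{\HH_j}\nu(p)\neq0$, and I would decompose $C_S=\bigcup_{j=2}^{k}C_S^{(j)}$, where on $C_S^{(j)}$ the normal $\nu$ has vanishing components in $\HH_1,\dots,\HH_{j-1}$ and nonzero component in $\HH_j$; thus $X_rf=0$ on $C_S^{(j)}$ for every $r$ lying in a layer below $j$. Because $\HH_j=[\HH_1,\HH_{j-1}]$, the nondegeneracy of the bracket pairing against $\P_{\HH_j}\nu(p)\neq0$ produces indices $a\in I\cc$ and $b$ in the layer $\HH_{j-1}$ with
$$X_a(X_bf)(p)-X_b(X_af)(p)=[X_a,X_b]f(p)=\sum_{c}\SC^c_{ab}\,X_cf(p)\neq0,$$
using \eqref{notabilmente}. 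Crucially this identity involves only second derivatives of $f$, so it is valid for $f\in\cont^2$ however large $j$ is: I never iterate brackets. Now there are two cases. If $X_a(X_bf)(p)\neq0$, set $w:=X_bf$, which vanishes on $C_S^{(j)}$ and, since $X_a(p)\in\TT_pS$, has nonzero tangential differential at $p$. If instead $X_b(X_af)(p)\neq0$, set $w:=X_af$, which vanishes on all of $C_S$; were its tangential differential zero we would have $\nabla w(p)\parallel\nu(p)$, forcing $X_bw(p)=c\,\langle\nu,X_b\rangle=0$ because $\nu(p)$ has no component in layer $j-1$, a contradiction. In either case $C_S^{(j)}$ lies locally inside the $\cont^1$ hypersurface $\{w|_S=0\}$ of $S$, of dimension $n-2$; a countable cover gives $\dim_{\mathrm{Eu-Hau}}C_S^{(j)}\leq n-2$, and summing the finitely many strata proves (i).

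For (ii) I would write $N=\{f_1=f_2=0\}$ with $df_1\wedge df_2\neq0$ and note that $C_N=\{x\in N:\ \dg f_1(x)\wedge\dg f_2(x)=0\}$, i.e. the two horizontal gradients are linearly dependent. The same layer stratification is run simultaneously on $f_1$, on $f_2$, and on their dependence relation. The hypothesis $\dim\VV\geq2$ is exactly what furnishes a second independent bracket direction: after one bracket identity trims one codimension, the two-dimensional normal plane together with a second vertical direction supplies a further $\cont^1$ function with independent tangential differential along $N$, improving the bound by one and yielding $\dim_{\mathrm{Eu-Hau}}C_N\leq n-3$.

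The hard part throughout is the bookkeeping between low ($\cont^2$) regularity and the deep layers: every constraint must be produced by a single, non-iterated bracket applied to $f$, which is what forces both the stratification by the first non-vanishing layer of $\nu$ and the case distinction above. The genuinely delicate point is Case B, where the chosen field $X_b$ is vertical and \emph{not} tangent to $S$, and the argument is rescued only by the fact that $\nu$ has no component in layer $j-1$. In (ii) this difficulty is compounded by having to track a two-dimensional normal plane, and the assumption $\dim\VV\geq2$ is precisely the structural input guaranteeing the extra bracket relation that the sharper exponent $n-3$ requires.
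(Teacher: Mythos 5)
The paper itself contains no proof of this statement: Theorem \ref{baloghteo} is quoted from Balogh--Pintea--Rohner \cite{Bal3} (Theorem 4.5 there), so your attempt can only be judged on its own terms. Part (i) holds up: the stratification of $C_S$ by the first layer in which $\nu(p)$ has a nonzero component, the single non-iterated bracket identity (which costs exactly two derivatives, so $\cont^2$ suffices), the dichotomy producing a $\cont^1$ function $w$ that vanishes on the stratum and has nonzero tangential differential --- in Case B the observation that $\nu(p)$ has no layer-$(j-1)$ component is indeed what rescues the argument --- and the countable-cover conclusion are all correct. This is a genuine, self-contained proof of (i).

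Part (ii) contains a real gap, and it begins with the definition of $C_N$. You read $C_N$ as the set where $\dg f_1\wedge\dg f_2=0$, i.e.\ where the two horizontal gradients are linearly \emph{dependent}; this is the natural extension of the only definition the paper gives in codimension two (namely $C_{\partial S}=\{|\P\cc(\nu\wedge\eta)|=0\}$), so the misreading is invited by the paper itself, but with this reading the statement is \emph{false}, and hence no proof can exist. Counterexample: in $\GG=\mathbb{H}^1\times\mathbb{H}^1$ (so $n=6$ and $\dim\VV=2$), let $N=\{y=0,\ t=0\}$, where $(x,y,t)$ are exponential coordinates on the first factor. With $f_1=y$, $f_2=t$ one computes $\dg f_1=Y$ and $\dg f_2=-\frac{y}{2}X+\frac{x}{2}Y$, which on $N$ equals $\frac{x}{2}Y$; thus the horizontal gradients are proportional at every point of $N$, so $C_N=N$ and $\dim_{\rm Eu-Hau}(C_N)=n-2>n-3$. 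The theorem proved in \cite{Bal3} concerns instead the \emph{full tangency} set $\{x\in N:\HH_x\subseteq\TT_x N\}$, equivalently $\{x\in N:\dg f_1(x)=\dg f_2(x)=0\}$; the hypothesis $\dim\VV\geq 2$ is precisely the condition $\DH\leq n-2$ that makes this inclusion possible at all (in the example above this set is empty, consistently with the theorem).

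This also explains why your sketch for (ii) could not be completed. Under your reading, the part (i) mechanism genuinely breaks at points where the gradients are dependent but not both zero: the degenerate linear combination in the normal plane rotates along $C_N$ (in the example it is $t-\frac{x_0}{2}\,y$ at the point with coordinate $x_0$), so a function $w$ built from a fixed combination of $f_1,f_2$ at $p$ vanishes at $p$ but not on $C_N$ near $p$, no stratification by ``first nonvanishing layer'' can be set up, and no appeal to ``a second independent bracket direction'' repairs this --- it is an obstruction, not bookkeeping. Under the correct full-tangency definition, by contrast, your part (i) argument adapts almost verbatim: at a tangency point \emph{every} horizontal field is tangent to $N$, the normal $2$-plane is entirely vertical, one stratifies by the first layer onto which this plane has a nonzero projection (a rotation-invariant condition), and in Case B one replaces ``$\nabla w(p)\parallel\nu(p)$'' by ``$\nabla w(p)$ lies in the normal $2$-plane'', both of whose spanning vectors have vanishing components in all layers below $j$. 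You had all the tools; the missing step was identifying the right notion of $C_N$.
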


\begin{oss}\label{monteoss}Let
$N\subset \GG$ be a $(n-2)$-dimensional submanifold of class
$\cont^2$. This smoothness condition  is sharp, see \cite{Bal3}.
Moreover, we stress that $\dim \VV=1$ just for  Heisenberg groups  and $2$-step Carnot groups  having $1$-dimensional center.
For Heisenberg groups $\mathbb{H}^n,\,n>1$,  using
 Frobenius' Theorem yields
$\dim_{\rm Eu-Hau}(C_N)\leq n$, where $n=\frac{\dim\HH}{2}$; see also \cite{Bal3}. On
the contrary,
$1$-dimensional curves in $\mathbb{H}^1$, can be  horizontal or transversal to
$\HH$. For $2$-step groups having $1$-dimensional center (or, equivalently,  horizontal bundle $\HH$ of codimension $1$)  a simple analysis shows that $\dim_{\rm
Eu-Hau}(C_N)= n-2$ if, and only if, $\GG$ reduces to the  direct product of $\mathbb{H}^1$  and of a Euclidean space $\R^{\DH-2}$.\end{oss}

\section{Isoperimetric constants and the 1st eigenvalue  of $\lll$ on compact hypersurfaces}\label{poincinsect}

As a consequence of the Coarea Formula \eqref{1coar} we may
generalize to the Carnot groups setting some results about
isoperimetric constants and global Poincar\'e inequalities for which
we refer the reader to \cite{Ch1, Ch3}; see also
\cite{Cheeger}, \cite{Yau}.

Let $S\subset\GG$ be a compact hypersurface of class $\cont^2$ with
(or without) boundary. Similarly as in the Riemannian setting (see
\cite{Cheeger} and \cite{Yau}), we may give the following: \begin{Defi}\label{D1}The {\rm
isoperimetric constant} ${\rm Isop}(S)$ of $S$ is defined as
follows:\begin{itemize} \item if $\partial S=\emptyset$, we set
$${\rm Isop}(S):=\inf\frac{\nis(N)}{\min\{\per(S_1),\per(S_2)\}},$$where
the infimum is taken over all $\cont^2$-smooth $(n-2)$-dimensional
submanifolds $N$ of $S$ which divide $S$ into two hypersurfaces
$S_1, S_2$ with common boundary $N=\partial S_1=\partial S_2$;
\item if $\partial S\neq \emptyset$, we set
$${\rm Isop}(S):=\inf\frac{\nis(N)}{\per(S_1)},$$where
$N\subset S$ is a smooth hypersurface of $S$ such that $N\cap \partial S=\emptyset$ and $S_1$ is the unique $\cont^2$-smooth
$(n-2)$-dimensional submanifold of $S$ such that  $N=\partial
S_1$.

\end{itemize}
Here above $\partial S,\, S_1,\, S_2 $ and $N=\partial S_i\,(i=1,2)$ are not assumed to be connected.
\end{Defi}
This definition requires some comments. As recalled in the
introduction, in the Riemannian setting analogous isoperimetric
constants were introduced by Cheeger in \cite{Cheeger}, in order
to give a geometric lower bound for the smallest eigenvalue of the
Laplace-Beltrami operator on smooth compact Riemannian manifolds.
This definition was somewhat  motivated by an example of Calabi,
the so-called \it dumbbell \rm manifold, homeomorphic to ${\bf
S}^2$. Actually, an analysis of this example shows that, in order
to bound $\lambda$ from below, the diameter and the volume are not
enough.

We also have to recall that these isoperimetric constants turn out to be strictly positive. Although, this claim turns out to be (more or less) elementary in dimension $n=2$, it becomes a bit more difficult when $n>2$; see \cite{Cheeger}.
Some years later after Cheeger result, Yau (see \cite{Yau}) reconsidered the isoperimetric constants and demonstrated that $\lambda$ has a bound in terms of volume, diameter and (of a lower bound of the) Ricci curvature. See the survey \cite{Li} for a glimpse on this topic.

 Below we shall generalize some of the results of  \cite{Yau}. Our results will follow the original scheme,   which is based mainly on a suitable use of the Coarea formula for smooth functions. Note also that, instead of $\cin$-smooth hypersurfaces, here we are considering hypersurfaces of class $\cont^2$. We have to observe that all the results could also be stated for  $\cont^1$ hypersurfaces. But the delicate matter here is that in our   setting, new difficulties come from the presence of characteristic points and, in the $\cont^1$ case, it is not  simple to prove that
isoperimetric constants are strictly positive. Actually, the following further hypothesis seems to be unavoidable in order to have  non-zero isoperimetric constants:
\begin{itemize}\item[${\bf (H)}$]\it every $\cont^2$-smooth
$(n-2)$-dimensional submanifold $N\subset S$ satisfies $\dim\,
C_{N}<n-2.$
\end{itemize}
This  assumption can be overcome by using the generalized Derridj's Theorem \ref{baloghteo}; see also Remark \ref{monteoss}. As a consequence, the results of this section  are \textquotedblleft meaningful\textquotedblright (in the sense that the isoperimetric constants do not vanish) at least for  any Carnot group $\GG$ such that $\dim \VV\geq 2$ and for all Heisenberg groups $\mathbb H^n$, with $n>1$.

\begin{teo}\label{zaz}Let $S\subset\GG$ be a compact hypersurface of class $\cont^2$. \begin{itemize} \item[{\rm (i)}]
If $\partial S=\emptyset$, then
$${\rm Isop}(S)=\inf\frac{\int_S|\qq \psi|\,\per}{\int_S|\psi|\,\per},$$where
the infimum is taken over all $\cont^2$-smooth functions on $S$
such that $\int_S\psi\,\per=0$. \item[{\rm (ii)}] If $\partial S\neq
\emptyset$, then $${\rm Isop}(S)=\inf\frac{\int_S|\qq
\psi|\,\per}{\int_S|\psi|\,\per},$$ where the infimum is taken over
all $\cont^2$-smooth functions on $S$ such that $\psi|_{\partial
S}=0$.
\end{itemize}
\end{teo}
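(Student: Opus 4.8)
The plan is to prove the two inequalities between ${\rm Isop}(S)$ and the functional infimum, which I denote by $C$, using the Coarea Formula \eqref{1coar} as the single engine, exactly along Yau's Riemannian scheme. The bound $C\le{\rm Isop}(S)$ is the constructive one (one exhibits nearly optimal test functions), while $C\ge{\rm Isop}(S)$ is the one extracted from the Coarea Formula together with the very definition of the isoperimetric constant in Definition \ref{D1}. Throughout, the second ingredient is Cavalieri's principle for the measure $\per$, namely $\int_0^{+\infty}\per(\{\psi>s\})\,ds=\int_S\psi\,\per$ for $\psi\ge0$.

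For $C\ge{\rm Isop}(S)$ I would argue on the level sets of $\psi$. In case (ii), given an admissible $\psi$ with $\psi|_{\partial S}=0$, I may replace $\psi$ by $|\psi|$ (this leaves $|\qq\psi|$ unchanged almost everywhere and preserves the boundary condition) and so assume $\psi\ge0$. By Sard's Theorem almost every $s>0$ is a regular value, so $N_s:=\psi^{-1}[s]\cap S$ is a $\cont^1$ $(n-2)$-submanifold with $N_s\cap\partial S=\emptyset$, and it bounds the superlevel set $S_1^s:=\{\psi>s\}$. Definition \ref{D1} then gives $\nis(N_s)\ge{\rm Isop}(S)\,\per(S_1^s)$; integrating in $s$, using the Coarea Formula \eqref{1coar} on the left and Cavalieri on the right, yields $\int_S|\qq\psi|\,\per\ge{\rm Isop}(S)\int_S|\psi|\,\per$, whence $C\ge{\rm Isop}(S)$. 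In case (i) the same computation must instead be run around a median $m$ of $\psi$, i.e.\ a level with $\per\{\psi>m\}\le\frac12\per(S)$ and $\per\{\psi<m\}\le\frac12\per(S)$: splitting $\psi-m$ into its positive and negative excursions guarantees that every superlevel set entering the estimate has $\per\le\frac12\per(S)$, hence realises the minimum $\min\{\per(S_1),\per(S_2)\}$ in Definition \ref{D1}, so that the isoperimetric bound applies to both excursions. This produces $\int_S|\qq\psi|\,\per\ge{\rm Isop}(S)\int_S|\psi-m|\,\per$, and one then wishes to conclude by invoking the normalisation $\int_S\psi\,\per=0$.

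For the reverse inequality $C\le{\rm Isop}(S)$ I would build near-optimal test functions. Fixing $\varepsilon>0$, choose a dividing submanifold $N=\partial S_1$ (with $N\cap\partial S=\emptyset$ in case (ii)) whose isoperimetric ratio is within $\varepsilon$ of ${\rm Isop}(S)$; by hypothesis ${\bf (H)}$ and the generalized Derridj Theorem \ref{baloghteo} one may take $\dim C_N<n-2$, so that $\nis(N)$ is finite and $N$ is non-degenerate. Working in a tubular neighborhood of $N$ inside $S$, let $\psi_\delta$ be a $\cont^2\ss$ cut-off equal to $1$ on $S_1$ outside the $\delta$-collar of $N$, equal to $0$ outside $S_1$, and depending only on the regularized distance to $N$ across the collar; in case (i) subtract the constant $\per(S_1)/\per(S)$ so that $\int_S\psi_\delta\,\per=0$. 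As $\delta\to0$ the Coarea Formula shows $\int_S|\qq\psi_\delta|\,\per\to\nis(N)$, while $\int_S|\psi_\delta|\,\per\to\per(S_1)$ in case (ii) and $\int_S|\psi_\delta|\,\per\to 2\,\per(S_1)\,\per(S_2)/\per(S)$ in case (i). Hence the Rayleigh quotient converges to $\nis(N)/\per(S_1)$, respectively to $\nis(N)\,\per(S)/\big(2\,\per(S_1)\,\per(S_2)\big)\le\nis(N)/\min\{\per(S_1),\per(S_2)\}$; letting $\delta\to0$ and then $\varepsilon\to0$ gives $C\le{\rm Isop}(S)$.

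The main obstacle is precisely the lower bound in the closed case (i): the Coarea argument naturally controls $\int_S|\psi-m|\,\per$ through the median $m$, whereas the statement is phrased with $\int_S|\psi|\,\per$. The delicate step is therefore to pass from the median to the mean, exploiting $\int_S\psi\,\per=0$ and the fact that the infimum is approached (via the cut-offs above) by functions whose median and mean coincide, so that in the limit the two normalisations agree. A secondary, more routine technical issue, already present in case (ii), is the admissibility of $|\psi|$ and of the cut-offs $\psi_\delta$ near the characteristic set: one must check that the associated forms stay in $L^\infty$ in the sense of Definition \ref{adm}, and invoke $\mathcal H_{CC}^{Q-1}(C_S)=0$ so that neither the Coarea Formula \eqref{1coar} nor the isoperimetric comparison is affected by characteristic points.
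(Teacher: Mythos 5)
Your treatment of case (ii) and of the upper bound $C\le{\rm Isop}(S)$ in both cases is sound and is essentially the paper's own argument (the paper details only case (i), using cut-offs built from the homogeneous distance $\varrho$ with a constant $-\alpha$ chosen to enforce $\int_S\psi_\epsilon\,\per=0$; your collar cut-offs are the same device). The genuine gap is exactly where you flagged it: the lower bound in the closed case (i). Your median argument correctly yields
$$\int_S|\qq\psi|\,\per\;\ge\;{\rm Isop}(S)\int_S|\psi-m|\,\per,$$
but the bridge you propose to $\int_S|\psi|\,\per$ cannot be built. First, the inequality you would need, $\int_S|\psi-m|\,\per\ge\int_S|\psi|\,\per$, runs the wrong way: the median minimizes $\beta\mapsto\int_S|\psi-\beta|\,\per$, so for a mean-zero $\psi$ one only has $\int_S|\psi-m|\,\per\le\int_S|\psi|\,\per\le 2\int_S|\psi-m|\,\per$ (the last step because $|m|\,\per(S)=\left|\int_S(m-\psi)\,\per\right|\le\int_S|\psi-m|\,\per$), i.e.\ a lower bound by ${\rm Isop}(S)/2$, not by ${\rm Isop}(S)$. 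Second, your proposed fix --- that the infimum is approached by functions whose median and mean coincide --- confuses what must be proved: $C\ge{\rm Isop}(S)$ is a statement about \emph{every} mean-zero competitor, and properties of the near-optimal cut-offs are irrelevant to it. In fact the factor $2$ is sharp: take $N$ dividing $S$ into pieces with $\per(S_1)\ll\per(S_2)$ (a ``dumbbell'' with very unequal lobes; already admissible in the abelian case $\GG=\R^n$, a $1$-step Carnot group where $\per$, $\nis$, $\qq$ reduce to the Euclidean objects). Then your own (and the paper's) test functions have Rayleigh quotient converging to
$$\frac{\left(\per(S_1)+\per(S_2)\right)\nis(N)}{2\,\per(S_1)\,\per(S_2)}\;<\;\frac{\nis(N)}{\min\{\per(S_1),\per(S_2)\}},$$
the ratio of the two sides tending to $\tfrac12$ as $\per(S_1)/\per(S_2)\to 0$. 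So the mean-zero infimum can lie strictly below ${\rm Isop}(S)$, and no argument can close the gap you identified.

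You should also know that the paper's own proof of (i) commits the same oversight, only silently: it applies Definition \ref{D1} to the superlevel sets $\{\psi^{\pm}>t\}$ in the chain $\int_0^{\infty}\nis\{\psi^{\pm}=t\}\,dt\ge{\rm Isop}(S)\int_S|\psi^{\pm}|\,\per$, which is legitimate only when each superlevel set is the piece of smaller $\HH$-perimeter --- a median-type condition that $\int_S\psi\,\per=0$ does not grant. So your proposal is not worse than the paper; it is more candid about the defect. What the coarea method actually proves in the closed case is the equality with denominator $\inf_{\beta\in\R}\int_S|\psi-\beta|\,\per$ (equivalently, with a median-zero normalization), precisely the normalization the paper itself adopts for ${\rm Isop}_0(S)$ in Theorem \ref{zdaz}; with the mean-zero normalization one obtains the stated equality only up to a factor of $2$.
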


\begin{war}\label{os1}The definition of ${\rm Isop}(S)$  can be weakened.
For instance, $(i)$ of Definition \ref{D1} can be given by
assuming $S$ of class $\cont^1$ and then by taking the infimum
over  all $(n-2)$-dimensional submanifolds $N$ of $S$ of class
$\cont^1$ which divide $S$ into two hypersurfaces $S_1, S_2$ with
common boundary $N=\partial S_1=\partial S_2$. In this case,
$(i)$ of  Theorem \ref{zaz} holds, without modifications,
  by taking the infimum over $\cont\ss^1$-smooth functions. If $\partial S\neq \emptyset$ an analogous claim holds, for the other isoperimetric constant. Furthermore, equivalent remarks can be given for all the results of this section.
Nevertheless, as already said, this weaker formulation seems to be less meaningful because of the presence of characteristic points.
\end{war}

\begin{war}Throughout this section, we shall fix a homogeneous distance $\varrho$ on $\GG$ of class $\cont^1$ outside the diagonal of $\GG$.\end{war}

\begin{proof}[Proof of Theorem \ref{zaz}]The proof repeats almost verbatim the arguments of Theorem 1 in
\cite{Yau}. We just prove the theorem for $\partial S=\emptyset$
since the other case is analogous. First, let us prove the
inequality

$${\rm Isop}(S)\leq\inf\frac{\int_S|\qq
\psi|\,\per}{\int_S|\psi|\,\per}$$where $\psi\in\cont^2(S)$ and
$\int_S\psi\,\per=0$. To prove this inequality let us consider the
auxiliary functions $\psi^{+}=\max\{0, \psi\},\, \psi^{-}=\max\{0,
-\psi\}$. By applying the Coarea Formula \eqref{1coar} and the 
definition of ${\rm Isop}(S)$ we get that$$\int_{S}|\qq\psi^{\pm}|\,\per=\int_0^{+\infty}\nis\{x\in
S: \psi^\pm=t\}\,dt\geq{\rm Isop}(S)\int_S|\psi^\pm|\,\per.$$Now we
shall prove the reversed inequality. So let us assume that
$\per(S_1)\leq\per(S_2)$ and let $\epsilon>0$. By making use of the fixed homogeneous distance $\varrho$ on $\GG$, we now define a
 function $ \psi_{\epsilon}:S\longrightarrow\R$ by setting
\begin{eqnarray}\label{ODER1}   \psi_{\epsilon}(x)|_{S_1}:=
\left\{\begin{array}{ll} \frac{\varrho(x, N)}{\epsilon}
\,\,\,\mbox{if }\,\, \varrho(x, N)\leq \epsilon
\\\\
\,\,\,\,\,\,\, 1\quad\mbox{if } \, \varrho(x, N)> \epsilon\end{array}
,\quad\right.\quad\psi_{\epsilon}(x)|_{S_2}:= \left\{\begin{array}{ll}
-\alpha\frac{\varrho(x, N)}{\epsilon} \,\,\,\mbox{if }\,\,
\varrho(x, N)\leq \epsilon
\\\\
\,\,\,\,  -\alpha  \quad\quad\mbox{if } \,\,\,\varrho(x, N)>
\epsilon\end{array},\right.\end{eqnarray}where the constant
$\alpha$ depends on $\epsilon$ and is chosen in a way that
$\int_S\psi_\epsilon\,\per=0.$ Obviously
$$\lim_{\epsilon\rightarrow
0}\alpha=\frac{\per(S_1)}{\per(S_2)}.$$ Since
\begin{eqnarray*}\int_{S}|\qq\psi_\epsilon|\,\per&=&\frac{1+\alpha}
{\epsilon}\int_{N_\epsilon:=\{x\in S:\varrho(x,
N)\leq\epsilon\}}|\qq\varrho(x,N)|\,\per\\&=&\frac{1+\alpha}
{\epsilon}\int_{0}^{\epsilon}\nis\{x\in N_\epsilon: \varrho(x,
N)=t\}\,dt,\end{eqnarray*}one gets $$\lim_{\epsilon\rightarrow
0}\int_{S}|\qq\psi_\epsilon|\,\per=(1+\alpha)\,\nis(N).$$Moreover
$\lim_{\epsilon\rightarrow
0}\int_{S}|\psi_\epsilon|\,\per=\per(S_1)+\alpha\,\per(S_2)$. Putting
all together we  get$$\inf_{\psi}\frac{\int_S|\qq
\psi|\,\per}{\int_S|\psi|\,\per}\leq\lim_{\epsilon\rightarrow
0}\frac{\int_S|\qq
\psi_\epsilon|\,\per}{\int_S|\psi_\epsilon|\,\per}\leq  \frac{\per(N)}{\nis(S_1)}.
$$If we take the infimum over $N$ and $S_1$, the inequality follows.
\end{proof}

\begin{corollario}\label{555}Let $\lambda_1$ be the first non-zero eigenvalue of either the closed eigenvalue problem
 or the Dirichlet  eigenvalue problem; see Definition \ref{epr}.
  Then $\lambda_1\geq \frac{({\rm Isop}(S))^2}{4} $. \end{corollario}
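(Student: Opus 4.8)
The plan is to run the classical Cheeger--Yau argument, powered by the Rayleigh-quotient description of ${\rm Isop}(S)$ obtained in Theorem \ref{zaz}, together with the chain rule $\qq(\psi^2)=2\psi\,\qq\psi$ and the Cauchy--Schwarz inequality. Let $\psi$ be a first eigenfunction associated with $\lambda_1$, so that $-\lll\psi=\lambda_1\psi$. The one ingredient I would record at the outset is the energy identity: combining $\lh(\psi\,\qq\psi)=\psi\,\lll\psi+|\qq\psi|^2$ with the horizontal divergence formula (Corollary \ref{GDc}), the boundary contribution is $\int_{\partial S}\psi\,\langle\qq\psi,\eta\ss\rangle\,\nis$, which vanishes in the Dirichlet case (since $\psi|_{\partial S}=0$) and is absent in the closed case (since $\partial S=\emptyset$). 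Hence $\int_S|\qq\psi|^2\,\per=\lambda_1\int_S\psi^2\,\per$, and the same computation localizes to any nodal domain on whose boundary $\psi$ vanishes.

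For the Dirichlet problem I would simply insert the test function $g:=\psi^2$ into Theorem \ref{zaz}(ii), which is legitimate because $g|_{\partial S}=0$. Using $|\qq g|=2|\psi|\,|\qq\psi|$ and Cauchy--Schwarz gives
$${\rm Isop}(S)\int_S\psi^2\,\per\le\int_S|\qq(\psi^2)|\,\per=2\int_S|\psi|\,|\qq\psi|\,\per\le 2\Big(\int_S\psi^2\,\per\Big)^{1/2}\Big(\int_S|\qq\psi|^2\,\per\Big)^{1/2}.$$
Squaring, dividing by $\int_S\psi^2\,\per$, and substituting the energy identity yields $({\rm Isop}(S))^2\le 4\lambda_1$, as claimed.

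The genuinely different point — and the main obstacle — is the closed problem, where $g=\psi^2$ is \emph{not} an admissible competitor because the constraint in Theorem \ref{zaz}(i) is $\int_S g\,\per=0$, impossible for a nonnegative function. Here I would bypass Theorem \ref{zaz} and go back to the defining isoperimetric inequality of Definition \ref{D1} via the Coarea Formula \eqref{1coar}. After possibly replacing $\psi$ by $-\psi$, I may assume $\per(\Omega^+)\le\tfrac12\per(S)$, where $\Omega^+:=\{\psi>0\}$, and apply \eqref{1coar} to $g:=(\psi^+)^2$ with $\psi^+=\max\{0,\psi\}$. For almost every $t>0$ the level set $\{g=t\}=\{\psi=\sqrt t\}$ divides $S$, and since $\{\psi>\sqrt t\}\subseteq\Omega^+$ has $\per$-measure at most $\tfrac12\per(S)$, Definition \ref{D1} gives $\nis(\{g=t\})\ge{\rm Isop}(S)\,\per(\{g\ge t\})$. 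Integrating in $t$ and using the layer-cake identity $\int_0^\infty\per(\{g\ge t\})\,dt=\int_S g\,\per$ produces $\int_S|\qq g|\,\per\ge{\rm Isop}(S)\int_S g\,\per$, precisely the estimate the forbidden test function $\psi^2$ would have supplied.

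From here the closed case closes exactly as the Dirichlet one: $|\qq g|=2\psi^+|\qq\psi|$, so Cauchy--Schwarz on $\Omega^+$ gives $({\rm Isop}(S))^2\int_{\Omega^+}\psi^2\,\per\le 4\int_{\Omega^+}|\qq\psi|^2\,\per$, and the localized energy identity $\int_{\Omega^+}|\qq\psi|^2\,\per=\lambda_1\int_{\Omega^+}\psi^2\,\per$ (integration by parts on $\Omega^+$, with the boundary integral over $\partial\Omega^+=\{\psi=0\}$ vanishing) finishes the proof upon dividing by $\int_{\Omega^+}\psi^2\,\per>0$. The care to be taken throughout — and the reason the $\cont^2$ regularity and hypothesis ${\bf (H)}$ are relevant — is that the level sets $\{\psi=\sqrt t\}$ and the nodal domain $\Omega^+$ must be treated as bona fide submanifolds away from the characteristic set, so that both the Coarea Formula and the boundary integration by parts are valid; this is handled by invoking Sard's theorem for almost every $t$ together with the negligibility of the characteristic set for the $\HH$-perimeter.
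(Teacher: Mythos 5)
Your proposal is correct, and in the Dirichlet case it coincides exactly with the paper's own argument: take a first eigenfunction, use the energy identity $\int_S|\qq\psi|^2\,\per=\lambda_1\int_S\psi^2\,\per$, Cauchy--Schwarz, the chain rule $|\qq\psi^2|=2|\psi|\,|\qq\psi|$, and Theorem \ref{zaz} applied to the test function $\psi^2$. Where you genuinely diverge is the closed case, and your divergence is substantive: the paper proves precisely that case (calling the Dirichlet one ``similar'') by the same one-line computation, i.e.\ it invokes Theorem \ref{zaz}(i) with the function $\psi^2$ --- but, as you observe, $\psi^2$ is nonnegative and can never satisfy the zero-mean constraint $\int_S\psi^2\,\per=0$ required by part (i), so the paper's final inequality is not literally justified there. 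Your detour --- choosing the sign of $\psi$ so that $\per(\{\psi>0\})\le\tfrac12\per(S)$, applying the Coarea Formula \eqref{1coar} to $(\psi^+)^2$, bounding each level set via Definition \ref{D1} (legitimate because $\{\psi>\sqrt{t}\}$ sits inside the small nodal domain, so it realizes the minimum in the definition), and then closing with Cauchy--Schwarz and the energy identity localized to $\Omega^+=\{\psi>0\}$ --- is the classical Cheeger/Yau nodal-domain argument, and it produces exactly the inequality $\int_S|\qq(\psi^+)^2|\,\per\ge{\rm Isop}(S)\int_S(\psi^+)^2\,\per$ that the paper's shortcut takes for granted. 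What the paper's route buys is brevity and a single computation covering both problems; what yours buys is a closed-case proof that actually goes through, at the modest price of an integration by parts on $\Omega^+$ (with vanishing boundary term on the nodal set $\{\psi=0\}$) and the standard caveats about regular values, nodal-set regularity and characteristic sets, which you flag and which are handled at the same level of rigor as in the rest of the paper.
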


\begin{proof}We just prove the  first claim, as the second claim is similar.  Let $\psi$ be an eigenfunction of $\lll$ corresponding to $\lambda_1$. Then
\begin{eqnarray*}\lambda_1&=& -\frac{\int_S\psi\,\lll\psi\,\per}{\int_S|\psi|^2\per}= \frac{\int_S|\qq\psi|^2\per}{\int_S|\psi|^2\per}\\&=&   \frac{\int_S|\qq\psi|^2\per}{\left( \int_S|\psi|^2\per\right)^2} \int_S|\psi|^2\per \\&\geq&   \frac{\left(\int_S|\psi||\qq\psi|\,\per\right) ^2}{\left( \int_S|\psi|^2\per\right)^2} \\&=&   \frac{1}{4}\frac{\left(\int_S |\qq\psi^2|\,\per\right)^2}{\left( \int_S \psi^2\per\right)^2}\geq \frac{\left({\rm Isop}(S)\right)^2}{4},
\end{eqnarray*}where we have used Theorem \ref{zaz} together with Cauchy-Schwarz inequality.
\end{proof}

We now extend, to Carnot groups, another isoperimetric
constant  and some related facts which, in the Riemannian case, were studied  in \cite{Yau}.
\begin{Defi} The {\it isoperimetric constant}
${\rm Isop}_0(S)$ of any $\cont^2$-smooth  compact hypersurface
 $S\subset\GG$ with boundary $\partial S$ is given by
$${\rm Isop}_0(S):=\inf\left\{\frac{\nis(\partial S_1\cap \partial S_2)}{\min\{\per(S_1),\per(S_2)\}}\right\},$$where
the infimum is taken over all decompositions $S=S_1\cup S_2$ such
that $\per (S_1\cap S_2)=0$.

\end{Defi}

\begin{teo}\label{zdaz}Let $S\subset\GG$ be a compact hypersurface of class $\cont^2$ with boundary.
Then
$${\rm Isop}_0(S)=\inf\left\{\frac{\int_S|\qq \psi|\,\per}{\inf_{\beta\in \R}\int_S|\psi-\beta|\,\per}\right\},$$where
the $\inf$ is taken over all $\cont^2$-functions defined on $S$.
\end{teo}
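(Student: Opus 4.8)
The plan is to adapt the scheme of Theorem~\ref{zaz}, the role of the normalization $\int_S\psi\,\per=0$ being now played by a \emph{median} of $\psi$ with respect to $\per$, that is, a value $m\in\R$ with $\per\{\psi>m\}\leq\frac12\per(S)$ and $\per\{\psi<m\}\leq\frac12\per(S)$. The elementary fact I would use repeatedly is that for fixed $\psi$ the infimum $\inf_{\beta\in\R}\int_S|\psi-\beta|\,\per$ is attained precisely at such a median $m$, so that $\inf_{\beta}\int_S|\psi-\beta|\,\per=\int_S|\psi-m|\,\per$. As usual, the two inequalities are proved separately.

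For the inequality ${\rm Isop}_0(S)\leq\inf\{\cdots\}$, I would fix $\psi\in\cont^2(S)$ and a median $m$, and apply the Coarea Formula~\eqref{1coar} to $\psi$, writing $\int_S|\qq\psi|\,\per=\int_{\R}\nis\{\psi=s\}\,ds$. For all but countably many $s$ one has $\per\{\psi=s\}=0$ (the level sets are pairwise disjoint and $\per(S)<\infty$), so the pair $S_1:=\{\psi\geq s\}$, $S_2:=\{\psi\leq s\}$ is an admissible decomposition $S=S_1\cup S_2$ with $\per(S_1\cap S_2)=0$ and $\partial S_1\cap\partial S_2\subseteq\{\psi=s\}$. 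When $s\geq m$ the median condition forces $\per(S_1)\leq\frac12\per(S)\leq\per(S_2)$, whence the definition of ${\rm Isop}_0(S)$ together with monotonicity of $\nis$ gives $\nis\{\psi=s\}\geq\nis(\partial S_1\cap\partial S_2)\geq{\rm Isop}_0(S)\,\per\{\psi>s\}$; symmetrically $\nis\{\psi=s\}\geq{\rm Isop}_0(S)\,\per\{\psi<s\}$ for $s\leq m$. Integrating these bounds in $s$ and recognizing, via Cavalieri's principle, that $\int_m^{+\infty}\per\{\psi>s\}\,ds+\int_{-\infty}^m\per\{\psi<s\}\,ds=\int_S|\psi-m|\,\per$, I obtain $\int_S|\qq\psi|\,\per\geq{\rm Isop}_0(S)\int_S|\psi-m|\,\per={\rm Isop}_0(S)\inf_{\beta}\int_S|\psi-\beta|\,\per$. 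Dividing and taking the infimum over $\psi$ yields this half.

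For the reverse inequality I would produce test functions as in~\eqref{ODER1}. Fix a decomposition $S=S_1\cup S_2$ with $\per(S_1\cap S_2)=0$, assume $\per(S_1)\leq\per(S_2)$, set $N:=\partial S_1\cap\partial S_2$, and define, using the fixed homogeneous distance $\varrho$, a function $\psi_\epsilon$ equal to $1$ on $\{x\in S_1:\varrho(x,N)>\epsilon\}$, equal to $\varrho(x,N)/\epsilon$ on the collar $\{x\in S_1:\varrho(x,N)\leq\epsilon\}$, and equal to $0$ on $S_2$. By the Coarea Formula, $\int_S|\qq\psi_\epsilon|\,\per=\frac1\epsilon\int_0^\epsilon\nis\{\varrho(\cdot,N)=t\}\,dt$, which tends to $\nis(N)$ as $\epsilon\to0$ by the same tubular-neighbourhood limit already used in Theorem~\ref{zaz}. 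Since $\psi_\epsilon\to\mathbf 1_{S_1}$ in $L^1(S,\per)$ with values in $[0,1]$, the convex maps $\beta\mapsto\int_S|\psi_\epsilon-\beta|\,\per$ converge uniformly on $[0,1]$, so their minima converge to $\inf_{\beta}\bigl(|1-\beta|\per(S_1)+|\beta|\per(S_2)\bigr)=\min\{\per(S_1),\per(S_2)\}$, the minimum being attained at $\beta=0$ precisely because $\per(S_1)\leq\per(S_2)$. Passing to the limit in the ratio and taking the infimum over all decompositions gives $\inf\{\cdots\}\leq{\rm Isop}_0(S)$, which together with the first part proves the theorem.

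The step I expect to be most delicate is the bookkeeping in the first inequality: one must justify that, for almost every $s$, the superlevel/sublevel pair is a legitimate competitor in the definition of ${\rm Isop}_0(S)$, i.e. that $\per\{\psi=s\}=0$, that the common topological boundary is contained in $\{\psi=s\}$, and that its $\nis$-measure is dominated by the coarea density $\nis\{\psi=s\}$. A secondary point is that the functions $\psi_\epsilon$ are only Lipschitz; to remain within $\cont^2(S)$ one mollifies them slightly without affecting the limits, or simply invokes Warning~\ref{os1}, where it is observed that the infimum may equally be computed over $\cont\ss^1$-functions.
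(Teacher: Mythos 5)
Your proposal is correct and follows essentially the same route as the paper's proof (which itself follows Yau's Theorem 6): the lower bound via the Coarea Formula, a median-type normalization and Cavalieri's principle---your splitting of the coarea integral at the median $m$ is just a repackaging of the paper's decomposition $\psi^{\pm}=\pm\max\{0,\pm(\psi-k)\}$---and the upper bound via distance-ramp test functions $\psi_\epsilon$ converging to $\mathbf{1}_{S_1}$, with the minimizing constant $k(\epsilon)\to 0$. The only cosmetic differences are that you place the collar inside $S_1$ rather than $S_2$ as in \eqref{ODER2}, and that you are more explicit than the paper about the admissibility of level-set decompositions and the regularization of the Lipschitz test functions.
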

\begin{proof}The proof is analogous to that of Theorem 6 in
\cite{Yau}. First, let us prove the inequality

$${\rm Isop}(S)\leq\inf\frac{\int_S|\qq
\psi|\,\per}{\int_S|\psi|\,\per}.$$To this purpose, let us define the
functions $\psi^{+}:=\max\{0, \psi-k\},\, \psi^{-}:=-\min\{0,
\psi-k\}$, where $k\in\R$ is any constant such that:\begin{eqnarray*}\per\{x\in S: \psi^{+}>0\}&\leq& \frac{1}{2}\per(S),\\
\per\{x\in S: \psi^{-}>0\}&\leq
&\frac{1}{2}\per(S).\end{eqnarray*}By using again the Coarea
Formula \eqref{1coar} together with the  definition of ${\rm Isop}_0(S)$
we get
that$$\int_{S}|\qq\psi^{\pm}|\,\per=\int_0^{+\infty}\nis\{x\in S:
\psi^\pm=t\}\,dt\geq{\rm Isop}(S)\int_S|\psi^\pm|\,\per.$$We
prove the other inequality. Assuming $\per(S_1)\leq\per(S_2)$ and
$\epsilon>0$, we define  the
 function

\begin{eqnarray}\label{ODER2}\psi_{\epsilon}(x)|_{S_1}:=1,\qquad\psi_{\epsilon}(x)|_{S_2}:=
 \left\{\begin{array}{ll}
1-\frac{\varrho(x,\partial S_1\cap
\partial S_2)}{\epsilon}\,\,\,\mbox{if }\,\, \varrho(x,\partial S_1\cap
\partial S_2))\leq
\epsilon
\\\\
\,\,\,\qquad   0 \,\qquad\qquad\mbox{if } \,\,\varrho(x,\partial
S_1\cap
\partial S_2))>
\epsilon.\end{array} \right.\end{eqnarray}Furthermore, one
can find a constant $k(\epsilon)$ satisfying
$$\int_{S}|\psi_\epsilon-k(\epsilon)|\,\per=\inf_{\beta\in\R}\int_S|\psi_\epsilon-\beta|\,\per$$and
such that $k(\epsilon)\longrightarrow 0$ for $\epsilon\rightarrow
0^+$. Hence $$\lim_{\epsilon\rightarrow
0}\left\{\frac{\int_S|\qq \psi_\epsilon|\,\per}{\inf_{\beta\in
\R}\int_S|\psi_\epsilon-\beta|\,\per}\right\}\leq\frac{\nis(\partial
S_1\cap
\partial S_2)}{\min\{\per(S_1),\per(S_2)\}}.$$

\end{proof}

\begin{corollario} \label{1zdaz}Let $S\subset\GG$ be a  compact hypersurface of class $\cont^2$. Then
\begin{eqnarray}\label{gay}\int_S|\psi-k|^2\per\leq \frac{4}{\left( {\rm
Isop}_0(S)\right) ^2}\int_S|\qq\psi|^2\,\per
\end{eqnarray}for every $\psi\in\cont^2(S)$ and every $k\in\R$
such that\begin{eqnarray*}\per\{x\in S: \psi\geq k\}&\geq& \frac{1}{2}\per(S),\\
\per\{x\in S: \psi\leq k\}&\geq
&\frac{1}{2}\per(S).\end{eqnarray*}Furthermore, if
$\psi\in\cont^2(S)$ and $\int_S\psi\,\per=0$,
then\begin{eqnarray}\label{ga0}\int_S|\psi|^2\per\leq
\frac{4}{({\rm Isop}_0(S))^2}\int_S|\qq\psi|^2\per.
\end{eqnarray}
\end{corollario}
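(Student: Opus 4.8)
The plan is to derive both $L^2$ inequalities from the $L^1$-type isoperimetric inequality encoded in Theorem \ref{zdaz}, following the classical truncation-plus-Cauchy--Schwarz scheme. The one analytic input I will use repeatedly is the following consequence of the Coarea Formula \eqref{1coar} and the definition of ${\rm Isop}_0(S)$: if $g\in\cont^1\ss(S)$ satisfies $g\geq 0$ and $\per\{g>0\}\leq\frac12\per(S)$, then for almost every level $s>0$ the superlevel set $S_1:=\{g\geq s\}$ and the sublevel set $S_2:=\{g\leq s\}$ form a decomposition with $\per(S_1\cap S_2)=\per\{g=s\}=0$ and common boundary $\{g=s\}$; since $\per\{g\geq s\}\leq\per\{g>0\}\leq\frac12\per(S)\leq\per\{g\leq s\}$, the minimum in the definition of ${\rm Isop}_0(S)$ is realized by $\per\{g\geq s\}$, whence $\nis\{g=s\}\geq {\rm Isop}_0(S)\,\per\{g\geq s\}$. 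Integrating over $s\in(0,+\infty)$, applying \eqref{1coar} to $g$ on the left and Cavalieri's principle $\int_0^{+\infty}\per\{g\geq s\}\,ds=\int_S g\,\per$ on the right, I obtain the key estimate $\int_S g\,\per\leq \frac{1}{{\rm Isop}_0(S)}\int_S|\qq g|\,\per$.

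To prove \eqref{gay}, I set $\phi:=\psi-k$ and introduce the truncations $\phi^+:=\max\{0,\phi\}$ and $\phi^-:=\max\{0,-\phi\}$. The hypotheses on $k$ say exactly that each of the sets $\{\phi>0\}=\{\psi>k\}$ and $\{\phi<0\}=\{\psi<k\}$ has $\per$-measure at most $\frac12\per(S)$, so the functions $g^{\pm}:=(\phi^{\pm})^2$ (which are of class $\cont^1\ss(S)$, with $\qq g^{\pm}=2\,\phi^{\pm}\,\qq\psi$) are legitimate inputs for the key estimate. Applying it to $g^{+}$ gives $\int_S(\phi^+)^2\,\per\leq\frac{2}{{\rm Isop}_0(S)}\int_S\phi^+\,|\qq\psi|\,\per$; Cauchy--Schwarz on the right-hand side, restricted to $\{\phi>0\}$ since $\phi^+$ vanishes elsewhere, followed by division by $\big(\int_S(\phi^+)^2\,\per\big)^{1/2}$ (the case where this vanishes being trivial), yields $\int_S(\phi^+)^2\,\per\leq\frac{4}{({\rm Isop}_0(S))^2}\int_{\{\phi>0\}}|\qq\psi|^2\,\per$. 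The identical argument for $g^{-}$ controls $\int_S(\phi^-)^2\,\per$ by the integral of $|\qq\psi|^2$ over $\{\phi<0\}$. Since $\{\phi>0\}$ and $\{\phi<0\}$ are disjoint and $|\psi-k|^2=(\phi^+)^2+(\phi^-)^2$, summing the two estimates produces \eqref{gay}.

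For the final inequality \eqref{ga0}, I first observe that the continuity of $\psi$ together with the finiteness of $\per$ guarantees the existence of a median value $k$ satisfying both measure hypotheses of \eqref{gay}. Expanding $\int_S(\psi-k)^2\,\per=\int_S\psi^2\,\per-2k\int_S\psi\,\per+k^2\per(S)$ and using $\int_S\psi\,\per=0$, I find $\int_S(\psi-k)^2\,\per=\int_S\psi^2\,\per+k^2\per(S)\geq\int_S\psi^2\,\per$. Combining this with \eqref{gay} applied to that median $k$ gives \eqref{ga0}.

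The main obstacle is not conceptual but lies in the careful bookkeeping of the key estimate: one must verify that $(\phi^{\pm})^2$ is genuinely of class $\cont^1\ss$, so that the Coarea Formula \eqref{1coar} applies (note that $\phi^{\pm}$ itself is only Lipschitz, but $t\mapsto(t^{\pm})^2$ is of class $\cont^1$, so the square is admissible as an input), and one must invoke the hypotheses on $k$ precisely at the point where the minimum $\min\{\per(S_1),\per(S_2)\}$ is identified with the measure of the superlevel set. All remaining steps are routine truncation and Cauchy--Schwarz manipulations mirroring Yau's Riemannian argument.
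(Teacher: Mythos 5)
Your proof is correct and takes essentially the same approach as the paper's: truncation at the level $k$ into $\psi^{\pm}$, the coarea-formula/${\rm Isop}_0(S)$ estimate applied to the squares $(\psi^{\pm})^2$, Cauchy--Schwarz, and the variance-minimizing property of the mean to pass from \eqref{gay} to \eqref{ga0}. The only cosmetic differences are that you apply Cauchy--Schwarz to each truncation separately and sum over the disjoint supports, where the paper applies it once to $\psi^{+}+\psi^{-}$, and that you spell out explicitly the $L^1$ key estimate (superlevel-set decomposition plus Cavalieri) that the paper leaves implicit in its appeal to the Coarea Formula.
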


\begin{proof}One has
$\int_S(\psi^+\cdot\psi^-)\,\per=0$, where the functions
$\psi^\pm$ are defined as in the proof of Theorem
\ref{zdaz}. Moreover, by using once more Coarea Formula, we
get\begin{eqnarray*}\int_S|\psi-k|^2\per&=&\int_S|\psi^+ +\psi^-|^2\per\\&\leq&
\int_S|\psi^+|^2\per+\int_S|\psi^-|^2\per\\& \leq&\frac{1}{{\rm
Isop}_0(S)}\left(\int_S|\qq(\psi^+)^2|\,\per+\int_S|\qq(\psi^-)^2|\,\per\right)\\&\leq&
\frac{2}{{\rm Isop}_0(S)}\int_S(\psi^+
+\psi^-)|\qq\psi|\,\per\\&\leq&\frac{2}{{\rm
Isop}_0(S)}\|\psi^++\psi^-\|_{L^2(S; \per)}\|\qq\psi\|_{L^2(S;
\per)}.\end{eqnarray*}This proves  \eqref{gay}. In order to prove
\eqref{ga0} we note that the hypothesis $\int_S\psi\,\per=0$
actually implies that
\[\int_S\psi^2  \per=\inf_{k\in\R}\int_S(\psi-k)^2 \per,\]which
together with \eqref{gay}, implies the thesis of the theorem.
\end{proof}
\section{Two upper bounds on $\lambda_1$}\label{ChRe}

Below we shall extend two (nowadays classical) inequalities obtained, respectively,  by Chavel and Reilly in the Euclidean/Riemannian setting. An important feature of these results is in that they give explicit upper bounds for the first non-trivial eigenvalue (of the Laplacian) of a compact submanifold of $\Rn$. For further details  we refer to \cite{88} and \cite{222}; see also \cite{111}. To begin with, let $\Om\subsetneq\GG$ be a bounded domain and assume that $S:=\partial\Om$ is a connected hypersurface of class $\cont^2$, with orientation given by the outward normal vector $\nu$.
Moreover, let $\x\cc$ be the horizontal position vector field and let us apply the usual divergence formula.
We also set $\sigma^n\rr=\Vol$. We have 
$$\DH\,\Vol(\Om)= \int_\Om\div\cc \x\cc \,\sigma^n\rr=\int_{\partial \Om}\langle\x\cc, \nu\rangle\,\sigma^{n-1}\rr=\int_{S}\langle\x\cc, \nn\rangle\,\per,$$where we have used identity $\rm(i)$ of Lemma \ref{calcolodivx}.
Furthermore,  we may further assume that the \textquotedblleft center of mass\textquotedblright of $\partial \Om$ (with respect to the $\HH$-perimeter) is placed at the identity $0\in\GG$. In other words, let us assume that $\int_S \x_i\,\per=0$ for every $i\in I\cc=\{1,...,\DH\}$, where $\x\cc\equiv(\x_1,...,\x_i,...,\x_\DH)$ is the horizontal position vector; see Lemma \ref{calcolodivx}.

 The last assumption is justified by the following:
\begin{lemma} Let $S\subset\GG$ be a compact hypersurface of class $\cont^i\,\,(i\geq 1)$. We can always choose a system of exponential coordinates $x=\exp(x_1,...,x_n)$ on $\GG$  such that
 $\int_S x_i\,\per(x)=0$ for every $i\in I\cc=\{1,...,\DH\}$.
\end{lemma}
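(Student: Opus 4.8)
The plan is to realise the sought-after change of coordinates as a left-translation and then to exploit the fact that, because of the stratification, the horizontal (first-layer) coordinates add \emph{linearly} under the group law. Re-centering the exponential coordinate system at a point $\xi=\exp(\xi_1,\dots,\xi_n)\in\GG$ means composing the given coordinate map with the left-translation by $\xi^{-1}$; the new system is again a system of exponential coordinates, now centred at $\xi$, and the new $i$-th coordinate of a point $x$ is the $i$-th (old) coordinate of $\xi^{-1}\bullet x$. Since the metric $g$, the unit $\HH$-normal $\nn$ and hence the $\HH$-perimeter $\per$ are left-invariant, the measure $\per$ on $S$ is unaffected by this relabelling, so the quantity I must annihilate is $\int_S(\xi^{-1}\bullet x)_i\,\per(x)$ for each $i\in I\cc$.

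First I would record the crucial structural fact: by the Baker--Campbell--Hausdorff formula \eqref{CBHformula} together with the grading $\gg=\HH_1\oplus\VV$ (equivalently property \eqref{notabilmente}), every bracket entering the polynomial correction $\mathcal{Q}$ in the group law $x\bullet y=x+y+\mathcal{Q}(x,y)$ lies in $\VV=\HH_2\oplus\dots\oplus\HH_k$. Hence $\mathcal{Q}$ has no component in the first layer, so for every $i\in I\cc$ one has $(y\bullet z)_i=y_i+z_i$. Recalling that $\xi^{-1}=\exp(-\xi_1,\dots,-\xi_n)$, this gives $(\xi^{-1}\bullet x)_i=x_i-\xi_i$ for all $i\in I\cc$, and consequently
\[
\int_S(\xi^{-1}\bullet x)_i\,\per(x)=\int_S x_i\,\per(x)-\xi_i\,\per(S)\qquad(i\in I\cc).
\]

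It then remains only to solve a linear system. Since $S$ is compact and $\per$ restricts to zero on the characteristic set $C_S$ while $|\PH\nu|>0$ on the non-characteristic part $S\setminus C_S$ (which is not $\sigma^{n-1}\rr$-negligible, by Theorem \ref{baloghteo}), we have $0<\per(S)<\infty$. I therefore set
\[
\xi_i:=\frac{1}{\per(S)}\int_S x_i\,\per(x)\qquad(i\in I\cc)
\]
and choose the remaining components $\xi_\alpha$ ($\alpha\in I\vv$) arbitrarily, say $\xi_\alpha=0$. In the exponential coordinates re-centred at this $\xi$ the displayed identity yields $\int_S x_i\,\per(x)=0$ for every $i\in I\cc$, which is the assertion.

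The computation is otherwise entirely routine; the one point on which the whole argument rests --- and hence the only step meriting care --- is the additivity $(y\bullet z)_i=y_i+z_i$ of the horizontal coordinates, i.e.\ that the correction term $\mathcal{Q}$ is purely vertical. In a non-stratified nilpotent group this would fail, the re-centering equation would no longer be affine in $\xi$, and solvability would not be automatic; it is precisely the grading $[\HH_1,\HH_{i-1}]=\HH_i$ of $\gg$ that makes the first-layer coordinates behave like ordinary Euclidean coordinates under $\bullet$.
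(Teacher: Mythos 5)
Your proposal is correct and takes essentially the same approach as the paper's proof: both re-center by left-translating with the inverse of the horizontal barycenter $a=\exp(a_1,\dots,a_\DH,0,\dots,0)$, $a_i=\frac{1}{\per(S)}\int_S x_i\,\per$, and both rest on the left-invariance of $\per$ together with the Baker--Campbell--Hausdorff fact that the group law is additive in the first-layer coordinates, so the re-centering identity $\int_S(a^{-1}\bullet x)_i\,\per=\int_S x_i\,\per-a_i\,\per(S)$ holds and vanishes by the choice of $a_i$. Your extra check that $0<\per(S)<\infty$, so that the barycenter is well defined, is a point the paper leaves implicit.
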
\begin{proof}Let$$a_i:=\frac{\int_Sx_i\,\per(x)}{\per(S)}\qquad\forall\,\,i\in I\cc=\{1,...,\DH\}$$and  $a\cc\equiv (a_1,...,a_i,...,a_\DH)$.  Set $a:=\exp(a\cc,0\vv)$, where  the symbol $0\vv$ denotes the origin of  $\VV\subset \gg$. Consider the change of variables $y:=\Phi(x)= a^{-1}\bullet x\,\,(x\in\GG)$.  Equivalently, we have $\Phi(x)=L_{a^{-1}}(x)$, where $L_{a^{-1}}$ is the left-translation by ${a^{-1}}=-a$; see Section \ref{cg}. The usual Change of Variables formula together with standard properties of the pull-back imply the following chain of equalities:\begin{equation}\label{teceqy}
\int_{\Phi(S)}f(y)\,\per(y)=\int_{S}f\left(\Phi(x)\right)\,{\mathcal J}ac(\Phi)(x)\,\per(x)=\int_{S}\Phi^\ast\left(f\per\right)=\int_{S}\left(f\circ\Phi\right)\, \left(\Phi^\ast\per\right)
\end{equation} for every smooth function $f:S\longrightarrow\R$; see, for instance, Lee's book \cite{Lee} Lemma 9.11, p. 214.
Using the left-invariance of the $\HH$-perimeter yields ${\mathcal J}ac(\Phi)=1$, or equivalently, $\Phi^\ast\per=\per$.
Now, let us assume that $f(y):=y_i$ for any $i\in I\cc$. Equivalently, let $f$ be the $i$-th  exponential coordinate of the variable $y\in\GG$. Note also that $\left(f\circ\Phi\right)(x)=\Phi_i(x)=-a_i+x_i$ for any $i\in I\cc$. Actually, this follows from the fact that the group law $\bullet$ acts linearly on the horizontal layer; see \eqref{CBHformula}. Then, using \eqref{teceqy} yields
$$\int_{\Phi(S)}y_i\,\per(y)=\int_S(-a_i+x_i)\,\per(x)=0\qquad\forall\,\,i\in I\cc,$$which achieves the proof. 
\end{proof}

We therefore get that
 \begin{eqnarray*}\DH\,\Vol(\Om)&=&\int_{S}\langle\x\cc, \nn\rangle\,\per\\&\leq&\int_{S}|\x\cc|\,\per\\&\leq& \sqrt{\per(S)} \sqrt{\int_{S}|\x\cc|^2\,\per}\\&=&\sqrt{\per(S)} \sqrt{\int_{S}\sum_{i\in I\cc}\x_i^2\,\per}\\&\leq&\sqrt{\dfrac{\per(S)}{\lambda_1}} \sqrt{\int_{S}\sum_{i\in I\cc}|\qq \x_i|^2\,\per},
\end{eqnarray*}where the last identity follows from Lord Rayleigh's characterization of the 1st non-trivial eigenvalue $\lambda_1$ of the operator $\lll$ on $S$.
Now a direct computation gives the pointwise identity $\sum_{i\in I\cc}|\qq \x_i|^2=\DH-1$.
Hence, putting all together, we have shown the following:
\begin{teo} Let $\Om\subsetneq\GG$ be a bounded domain with $\cont^2$ boundary $S=\partial D$. Moreover, let $\lambda_1$ be the 1st (non-trivial) eigenvalue of the operator $\lll$ on $S$. Then
 \begin{eqnarray*} \sqrt{\lambda_1}\,\dfrac{\Vol(\Om)}{\per(S)} \leq \dfrac{\sqrt{\DH-1}}{\DH}.
\end{eqnarray*}
\end{teo}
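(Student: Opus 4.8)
The plan is to feed the horizontal coordinate functions of $S$ into the Rayleigh quotient of $\lambda_1$, after first converting $\Vol(\Om)$ into a boundary integral over $S$ via the divergence theorem. Since $S=\partial\Om$ is compact and boundaryless, the relevant spectral problem is the closed one $(\mathrm{P}_1)$ of Definition \ref{epr}, whose first non-trivial eigenvalue enjoys Lord Rayleigh's variational characterization
$$\lambda_1=\inf\left\{\frac{\int_S|\qq\psi|^2\,\per}{\int_S\psi^2\,\per}:\ \psi\in\cont^2\ss(S\setminus C_S)\cap W\ss^{1,2}(S,\per),\ \int_S\psi\,\per=0\right\}.$$

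First I would write $\x\cc=\sum_{i\in I\cc}x_iX_i$ and apply the Riemannian divergence theorem with the left-invariant volume $\sigma^n\rr=\Vol$. Its full Riemannian divergence coincides with the horizontal divergence $\div\cc\x\cc=\DH$ of Lemma \ref{calcolodivx}(i): the extra vertical trace terms $\langle\nabla_{X_\alpha}\x\cc,X_\alpha\rangle$ $(\alpha\in I\vv)$ vanish by the stratification. Hence $\DH\,\Vol(\Om)=\int_S\langle\x\cc,\nu\rangle\,\sigma^{n-1}\rr$. As $\x\cc$ is horizontal, $\langle\x\cc,\nu\rangle=\langle\x\cc,\PH\nu\rangle=|\PH\nu|\,\langle\x\cc,\nn\rangle$, and since $\per\res S=|\PH\nu|\,\sigma^{n-1}\rr\res S$ this yields the clean identity $\DH\,\Vol(\Om)=\int_S\langle\x\cc,\nn\rangle\,\per$.

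Next I would normalize the position of $\Om$: by the centering lemma proved just above, a left translation (which preserves both $\Vol$ and $\per$) can be chosen so that $\int_Sx_i\,\per=0$ for every $i\in I\cc$, making the functions $x_i|_S$ admissible trial functions for $(\mathrm{P}_1)$. Estimating the identity by Cauchy--Schwarz twice, first pointwise $\langle\x\cc,\nn\rangle\le|\x\cc|$ and then in $L^2(S,\per)$, gives
$$\DH\,\Vol(\Om)\le\int_S|\x\cc|\,\per\le\sqrt{\per(S)}\left(\int_S\sum_{i\in I\cc}x_i^2\,\per\right)^{1/2},$$
and the Rayleigh inequality $\lambda_1\int_Sx_i^2\,\per\le\int_S|\qq x_i|^2\,\per$, summed over $i\in I\cc$, then bounds the last factor by $\lambda_1^{-1/2}\big(\int_S\sum_{i\in I\cc}|\qq x_i|^2\,\per\big)^{1/2}$.

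The step I expect to be the crux is the pointwise identity $\sum_{i\in I\cc}|\qq x_i|^2=\DH-1$ on $S\setminus C_S$. To prove it I would first observe that, for $i\in I\cc$, the vertical correction terms carried by the left-invariant frame do not affect the horizontal coordinates, so $X_j(x_i)=\delta_{ij}$ for all $j\in I\cc$; equivalently $\mathcal J\cc(\x\cc)=\mathbf{Id}_\DH$, the fact already used in Lemma \ref{calcolodivx}. Thus $\dg x_i=X_i$, and projecting onto $\HS$ gives $\qq x_i=\P\ss X_i=X_i-\langle X_i,\nn\rangle\,\nn$, whence $|\qq x_i|^2=1-\langle X_i,\nn\rangle^2$. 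Summing over $i\in I\cc$ and using that $\nn$ is a unit horizontal vector, $\sum_{i\in I\cc}\langle X_i,\nn\rangle^2=|\nn|^2=1$, yields $\DH-1$. Inserting $\int_S\sum_{i\in I\cc}|\qq x_i|^2\,\per=(\DH-1)\,\per(S)$ into the chain above leaves $\DH\,\Vol(\Om)\le\sqrt{(\DH-1)/\lambda_1}\,\per(S)$, which rearranges exactly to $\sqrt{\lambda_1}\,\Vol(\Om)/\per(S)\le\sqrt{\DH-1}/\DH$.
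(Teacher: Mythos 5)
Your proof is correct and takes essentially the same route as the paper: the divergence theorem applied to the horizontal position vector $\x\cc$, the centering lemma to make the $x_i$ admissible trial functions, Cauchy--Schwarz, Rayleigh's characterization of $\lambda_1$, and the pointwise identity $\sum_{i\in I\cc}|\qq x_i|^2=\DH-1$. The only difference is that you supply the details the paper leaves implicit, namely the vanishing of the vertical trace terms in the divergence and the computation $\qq x_i=\P\ss X_i=X_i-\langle X_i,\nn\rangle\,\nn$ behind the paper's \textquotedblleft direct computation\textquotedblright, and both of these check out.
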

 We now discuss another geometric inequality, which looks very similar  to the last one. More precisely, let $S$ be a $\cont^2$-smooth compact hypersurface without boundary. So let us make use of Rayleigh's principle$$\int_S\varphi^{2}\,\per\leq\int_S|\qq\varphi|^{2}\,\per$$ for any function $\varphi\in\cont^2(S\setminus C_S)\cap W^{1,2}\ss(S,\per)$ satisfying $\int_S\varphi\,\per=0$. Again, we assume that the center of mass of $S=\partial \Om$ is placed at $0\in\GG$ so that $\int_S \x_i\,\per=0$ for every $i\in I\cc$. Hence, similarly as above,  we get that
\begin{eqnarray*}
\lambda_1\int_S|\x\cc|^2\,\per=\lambda_1\sum_{i\in I\cc}\int_S\x_i^2\,\per\leq\lambda_1\sum_{i\in I\cc}\int_S|\qq\x_i|^2\,\per=(\DH-1)\,\per(S).
\end{eqnarray*}
At this point, we reformulate Corollary \ref{Mink} as follows:
\[ \int_{S}  \left(  (\DH-1)+ \left\langle\left( \MS\nn +   C\cc\nn\right),
\x\cc\right\rangle  \right)\per = 0.\]From this identity and Cauchy-Schwartz inequality, we easily get that
\begin{eqnarray*} (\DH-1)\,\per(S)&\leq& \sqrt{\int_S|\x\cc|^2\,\per} \sqrt{\int_S\left|\MS\nn +   C\cc\nn\right|^2\,\per} \\&\leq& \sqrt{\int_S|\x\cc|^2\,\per} \sqrt{\int_S\left(\MS^2 +   |C\cc\nn|^2\right)\,\per}.\end{eqnarray*}
Therefore $$\dfrac{\left((\DH-1)\,\per(S)\right)^2}{\int_S\left(\MS^2 +   |C\cc\nn|^2\right)\,\per}\leq\int_S|\x\cc|^2\,\per $$and hence\begin{eqnarray*}
\lambda_1\dfrac{\left((\DH-1)\,\per(S)\right)^2}{\int_S\left(\MS^2 +   |C\cc\nn|^2\right)\,\per}\leq(\DH-1)\,\per(S),
\end{eqnarray*}which proves the following:
\begin{teo}  Let $\Om\subsetneq\GG$ be a bounded domain with $\cont^2$ boundary $S=\partial D$ and $\nu$  the outward-pointing unit normal vector along $S$. Moreover, let $\lambda_1$ be the 1st eigenvalue of the operator $\lll$ on $S$. Then, the following upper bound for $\lambda_1$ holds
 \begin{eqnarray*}\lambda_1\leq\frac{\int_S\left(\MS^2 +|C\cc\nn|^2\right)\,\per}{(\DH-1)\,\per(S)}=\dfrac{\fint_S\left( \MS^2 +|C\cc\nn|^2\right)\,\per}{\DH-1}
\end{eqnarray*}
\end{teo}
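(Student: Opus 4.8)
The plan is to combine the variational (Rayleigh) characterization of the first eigenvalue $\lambda_1$ of $\lll$ with the Minkowsky-type identity of Corollary \ref{Mink}, using the horizontal coordinate functions $x_i$, $i\in I\cc$, as test functions. The argument runs in close parallel to the preceding theorem; the one genuinely new ingredient is that the reformulated Corollary \ref{Mink} replaces the elementary computation of $\DH\,\Vol(\Om)$.

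First I would place the center of mass of $S$ at the identity, so that $\int_S x_i\,\per=0$ for every $i\in I\cc$; this is legitimate by the centering lemma proved above, and it is exactly what makes each $x_i$ an admissible test function for Rayleigh's principle $\lambda_1\int_S\varphi^2\,\per\leq\int_S|\qq\varphi|^2\,\per$. Applying this inequality to each $x_i$ and summing over $i\in I\cc$, then invoking the pointwise identity $\sum_{i\in I\cc}|\qq x_i|^2=\DH-1$ (which follows from $\dg x_i=X_i$, hence $\qq x_i=\P\ss X_i$, together with $\sum_{i\in I\cc}\langle X_i,\nn\rangle^2=|\nn|^2=1$), I obtain the first key estimate
\[\lambda_1\int_S|\x\cc|^2\,\per\leq(\DH-1)\,\per(S).\]

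Next I would rewrite Corollary \ref{Mink} in the form $(\DH-1)\,\per(S)=-\int_S\langle\MS\nn+C\cc\nn,\x\cc\rangle\,\per$ and estimate its right-hand side by Cauchy--Schwarz, first pointwise and then in $L^2(S,\per)$, to get
\[(\DH-1)\,\per(S)\leq\sqrt{\int_S|\x\cc|^2\,\per}\,\sqrt{\int_S|\MS\nn+C\cc\nn|^2\,\per}.\]
Here I would use that $\MS\nn$ points along the $\HH$-normal while $C\cc\nn$ is horizontal-tangent: since $C\cc$ is skew-symmetric one has $\langle C\cc\nn,\nn\rangle=0$, so the two summands are orthogonal and $|\MS\nn+C\cc\nn|^2=\MS^2+|C\cc\nn|^2$. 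Squaring and rearranging then yields the lower bound
\[\int_S|\x\cc|^2\,\per\geq\frac{\big((\DH-1)\,\per(S)\big)^2}{\int_S\big(\MS^2+|C\cc\nn|^2\big)\,\per}.\]

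Finally I would chain the two estimates: substituting this lower bound for $\int_S|\x\cc|^2\,\per$ into the Rayleigh inequality of the first step gives
\[\lambda_1\,\frac{\big((\DH-1)\,\per(S)\big)^2}{\int_S\big(\MS^2+|C\cc\nn|^2\big)\,\per}\leq(\DH-1)\,\per(S),\]
and dividing through by $(\DH-1)\,\per(S)$ produces the asserted bound, the final equality being merely the definition of the average $\fint_S$. I expect the only delicate points to be bookkeeping: checking that the smooth, bounded functions $x_i$ genuinely lie in $\cont^2\ss(S\setminus C_S)\cap W\ss^{1,2}(S,\per)$ so that Rayleigh's principle applies, and confirming that Corollary \ref{Mink} is available even when $C_S\neq\emptyset$, which is guaranteed by $\MS\in L^1_{loc}(S;\per)$ as recorded in Remark \ref{osshsomm}. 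Beyond these verifications the argument is entirely elementary.
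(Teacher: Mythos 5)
Your proposal is correct and follows essentially the same route as the paper: centering the horizontal coordinates so that $\int_S x_i\,\per=0$, applying Rayleigh's principle to each $x_i$ with the pointwise identity $\sum_{i\in I\cc}|\qq x_i|^2=\DH-1$, and then combining the reformulated Minkowsky-type identity of Corollary \ref{Mink} with Cauchy--Schwarz to bound $(\DH-1)\,\per(S)$ by $\sqrt{\int_S|\x\cc|^2\,\per}\,\sqrt{\int_S(\MS^2+|C\cc\nn|^2)\,\per}$. Your explicit use of the skew-symmetry of $C\cc$ (so that $\langle C\cc\nn,\nn\rangle=0$ and $|\MS\nn+C\cc\nn|^2=\MS^2+|C\cc\nn|^2$) in fact justifies a step the paper leaves implicit, so no gap remains.
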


\section{Horizontal Linear Isoperimetric inequalities}\label{wlinisoineq}
Let $S\subset\GG$ be a compact
 hypersurface of class $\cont^2$ with (or without) boundary. Let $\x\cc$ be the horizontal position vector of $S$ and set $x\ss:=\x\cc-\g\nn$ where 
$\g=\langle \x\cc,\nn\rangle$ is the horizontal support
function of $S$; see Lemma \ref{calcolodivx}. We recall that
\begin{equation}\label{jkjlkloko} \int_{S}\left((\DH-1)+ \g\MS +  \langle C\cc\nn,
\x\ss\rangle\right)\per = \int_{\partial S}\langle
\x\cc,\eta\ss\rangle\,\nis;\end{equation}see
Corollary \ref{Mink}. Note  that, if $\partial S=\emptyset$, then the boundary integral vanishes. From this we easily get that

\begin{eqnarray}\label{bla}(\DH-1)\,\per(S)\leq \int_S\left(|\g||\MS| +  |\langle C\cc\nn,
\x\ss\rangle|\right)\per + \int_{\partial S}|\langle
\x\cc,\eta\ss\rangle|\nis.\end{eqnarray}

\begin{oss}[Assumptions on $\varrho$]\label{iponhomnor}
Let $\varrho(x)=\varrho(0,x)=\|x\|_\varrho$ be a homogeneous norm on $\GG$ and let $\varrho(x, y)=\|y^{-1}\bullet x\|_\varrho$ be the associated (homogeneous) distance on $\GG$. In this section we assume the following: 
\begin{itemize}\item[{\rm(i)}] $\varrho$ is  piecewise $\cont^1$ outside the diagonal of $\GG$;\item[{\rm(ii)}]$|\grad\cc\varrho|\leq 1$ at each regular point of $\varrho$;
\item[{\rm(iii)}]${|x\cc|}\leq {\varrho(x,0)}\quad\forall\,\,x\in\GG$.\end{itemize}
\end{oss}

\begin{es}\label{Kor}On the
Heisenberg group $\mathbb{H}^n$, the CC-distance $\dc$ satisfies these assumptions. Another example is the distance associated with the  Korany norm defined as
$\|x\|_\varrho:=\varrho(x)=\sqrt[4]{|x\cc|^4+16t^2}$ for $x=\exp(x\cc,t)\in\mathbb{H}^n$. This norm is
homogeneous and
  $\cin$-smooth  out of
$0\in\mathbb{H}^n$ and
satisfies conditions \rm (ii) \it and \rm (iii). \it This example can easily be
generalized to any Carnot group having step 2 and satisfying $C^\alpha\cc C^\beta\cd=-\mathbf{1}\ci\delta_{\alpha}^{\beta},$ $(\alpha, \beta\in I\cd)$. Actually, in this case, one can show that the homogeneous norm $\|\cdot\|_\varrho$,
defined by
$\|x\|_\varrho:=\sqrt[4]{|x\cc|^4+16|x\cd|^2}\,\,\forall\,x=\exp(x\cc, x\cd),$ satisfies all
the  conditions in Remark \ref{iponhomnor}.
\end{es}
 Let  $R$ be the radius of the $\varrho$-ball $B_\varrho(0,R)$, centered at the identity $0$ of the group $\GG$ and circumscribed
about $S$. It is important to remark that, because of the left-invariance of the $\HH$-perimeter,  we may replace $0$ with any $x\in\GG$. Below,  we shall estimate (by Cauchy-Schwarz inequality) the right-hand side of
\eqref{bla}. To this aim, note that $\g\leq
|\x\cc|\leq\|x\|_\varrho$. So we have
\begin{equation}\label{1pisoplin}(\DH-1)\,\per(S)\leq
R\left(\int_S\left(|\MS|+ |C\cc\nn|\right)\per+
\nis(\partial S)\right),\end{equation}which is a linear inequality. Obviously, if $S$ is
$\HH$-minimal, i.e. $\MS=0$, it follows that\begin{equation}\label{2isoplin}(\DH-1)\,\per(S)\leq
R\left(\int_S |C\cc\nn|\,\per+
\nis(\partial S)\right).\end{equation}Furthermore, if
$\MS^0:=\max\{\MS(x)|x\in S\}$, one gets
\begin{equation}\label{3isoplin}\per(S)\left((\DH-1)-R\MS^0\right)\leq
R\left(\int_S |C\cc\nn|\,\per+
\nis(\partial S)\right).\end{equation}Equivalently, we
have\begin{equation}\label{4isoplin}R\geq\frac{(\DH-1)\,\per(S)}{\MS^0\per(S)+\left(\int_S
|C\cc\nn|\,\per+ \nis(\partial S)\right)},
\end{equation}and, by assuming $R\MS^0<\DH-1$, we also get that
\begin{equation}\label{5isoplin}\per(S)\leq
\frac{R\left(\int_S |C\cc\nn|\,\per+
\nis(\partial S)\right)}{(\DH-1)-R\MS^0}.
\end{equation}

Here, we just remark that there are no closed compact $\HH$-minimal
hypersurfaces immersed in Carnot groups. This fact can be proved
by using the 1st variation formula of the $\HH$-perimeter;
see \cite{27}. The previous formulae have been proved for  
hypersurfaces with boundary, but they hold even if $\partial
S=\emptyset$. More precisely we have:

\begin{Prop}Let $S\subset\GG$ be a compact
 hypersurface of class $\cont^2$  without boundary. Let  $R$ be the radius of the $\varrho$-ball $B_\varrho(0,R)$, centered at the identity $0$ of the group $\GG$ and circumscribed
about $S$.  Then:
\begin{eqnarray}(\DH-1)\,\per(S)&\leq&
R \int_\UU\left(|\MS|+ |C\cc\nn|\right)\per;
\\R&\geq&\frac{(\DH-1)\,\per(S)}{\MS^0\per(S)+\int_S
|C\cc\nn|\,\per}; \\  \per(S)&\leq&
\frac{R \int_S |C\cc\nn|\,\per}{(\DH-1)-R\MS^0}.
\end{eqnarray}

\end{Prop}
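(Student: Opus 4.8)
The plan is to recognize that this Proposition is nothing but the closed (boundaryless) counterpart of the three inequalities \eqref{1pisoplin}, \eqref{4isoplin} and \eqref{5isoplin} already established for hypersurfaces with boundary, and that the whole derivation carries over verbatim once the boundary integrals $\nis(\partial S)$ are dropped. Accordingly, I would start from the Minkowsky-type identity of Corollary \ref{Mink}: for a compact hypersurface $S$ with $\partial S=\emptyset$ the right-hand boundary term of \eqref{jkjlkloko} is absent, so
\[
\int_{S}\left((\DH-1)+ \g\MS + \langle C\cc\nn, \x\ss\rangle\right)\per = 0.
\]
Rearranging and applying the triangle inequality together with Cauchy--Schwarz to the two remaining terms gives the closed analogue of \eqref{bla}, namely $(\DH-1)\per(S)\le\int_S\big(|\g|\,|\MS|+|C\cc\nn|\,|\x\ss|\big)\per$, with no boundary contribution.

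Next I would insert the pointwise size estimates furnished by the circumscribed ball and by the assumptions on $\varrho$ in Remark \ref{iponhomnor}. Since $S\subset B_\varrho(0,R)$, every $x\in S$ satisfies $\varrho(x,0)=\|x\|_\varrho\le R$, whence by condition (iii) one has $|\x\cc|\le\varrho(x,0)\le R$; consequently $|\g|=|\langle\x\cc,\nn\rangle|\le|\x\cc|\le R$, and, since $\x\ss=\x\cc-\g\nn$ is orthogonal to $\nn$, also $|\x\ss|=\sqrt{|\x\cc|^2-\g^2}\le|\x\cc|\le R$. Substituting $|\g|\le R$ and $|\x\ss|\le R$ into the previous estimate yields
\[
(\DH-1)\per(S)\le R\int_S\big(|\MS|+|C\cc\nn|\big)\per,
\]
which is the first asserted inequality (the $\int_\UU$ appearing in the statement being a typographical slip for $\int_S$).

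For the remaining two inequalities I would retrace the passage from \eqref{bla} to \eqref{3isoplin}--\eqref{5isoplin}, now with $\nis(\partial S)=0$. Keeping the curvature term signed and bounding $-\int_S\g\MS\,\per$ by means of $\MS\le\MS^0:=\max_S\MS$ together with $|\g|\le R$, while estimating the structure term as before by $R\int_S|C\cc\nn|\,\per$, produces $\big((\DH-1)-R\MS^0\big)\per(S)\le R\int_S|C\cc\nn|\,\per$; solving this for $R$ gives the second inequality and solving for $\per(S)$ under the standing hypothesis $R\MS^0<\DH-1$ gives the third. The only delicate point is the sign bookkeeping in replacing the symmetric bound $|\MS|$ by the one-sided quantity $\MS^0$, but this is precisely the step already carried out in deriving \eqref{3isoplin} and \eqref{4isoplin}, so no genuinely new obstacle arises: the entire substance of the proof is the observation that the boundary term vanishes when $\partial S=\emptyset$.
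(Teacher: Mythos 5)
Your proposal is correct and follows essentially the same route as the paper: the paper's own justification of this Proposition is simply that the derivation of \eqref{1pisoplin}--\eqref{5isoplin} from the Minkowsky-type formula of Corollary \ref{Mink} carries over verbatim once the boundary terms $\nis(\partial S)$ are dropped, which is exactly what you do (including the pointwise bounds $|\g|\leq R$, $|\x\ss|\leq R$ from Remark \ref{iponhomnor}, the same one-sided bookkeeping with $\MS^0$, and the observation that $\int_\UU$ in the statement is a slip for $\int_S$). No gap to report.
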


\subsection{Application: a weak monotonicity formula}\label{wmf}

In the sequel, we shall set $S_t=S\cap {B_\varrho}(x,t)$. The
``natural'' monotonicity formula which can be deduced from the
 inequality \eqref{1pisoplin} is contained in the next:

\begin{Prop}\label{in}The following inequality
holds \begin{equation}\label{diffin}-\frac{d}{dt}\frac{\per({S}_t)}{t^{\DH-1}}\leq
\frac{1}{t^{\DH-1}}\left(\int_{{S}_t}\left(|\MS|+|C\cc\nn|\right)\per
+ \nis(\partial{S}\cap B_\varrho(x,t))\right)
\end{equation}for $\mathcal{L}^1$-a.e. $t>0$.
\end{Prop}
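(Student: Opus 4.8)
The plan is to differentiate the rescaled perimeter $t\mapsto\per(S_t)/t^{\DH-1}$ and to feed the truncated hypersurface $S_t$ into the linear isoperimetric inequality \eqref{1pisoplin}. Write $P(t):=\per(S_t)$ and $u:=\varrho(x,\cdot)|_S$, so that $S_t=\{y\in S:u(y)<t\}$. Since $P$ is monotone nondecreasing it is differentiable for $\mathcal{L}^1$-a.e.\ $t$, and by Sard's theorem applied to $u$ almost every $t$ is a regular value; for such $t$ the set $S_t$ is a compact $\cont^2$ hypersurface whose $\cont^1$ boundary splits as $\partial S_t=(\partial S\cap B_\varrho(x,t))\cup(S\cap\partial B_\varrho(x,t))$, the two pieces overlapping in an $\nis$-null set. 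By the quotient rule,
\[-\frac{d}{dt}\frac{P(t)}{t^{\DH-1}}=\frac{(\DH-1)P(t)-t\,P'(t)}{t^{\DH}},\]
so after clearing denominators the assertion becomes equivalent to
\[(\DH-1)P(t)\leq t\left(P'(t)+\int_{S_t}\left(|\MS|+|C\cc\nn|\right)\per+\nis(\partial S\cap B_\varrho(x,t))\right).\]

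Next I would apply \eqref{1pisoplin} to $S_t$. Since $S_t\subset B_\varrho(x,t)$, the ball $B_\varrho(x,t)$ is circumscribed about $S_t$, and by left-invariance of the $\HH$-perimeter the centre may be placed at $x$, so we may take $R=t$. Using the boundary decomposition above this yields
\[(\DH-1)P(t)\leq t\left(\int_{S_t}\left(|\MS|+|C\cc\nn|\right)\per+\nis(\partial S\cap B_\varrho(x,t))+\nis(S\cap\partial B_\varrho(x,t))\right).\]
Comparing with the equivalent form of the claim, everything reduces to the single pointwise estimate $\nis(S\cap\partial B_\varrho(x,t))\leq P'(t)$ for a.e.\ $t$.

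This last estimate is the crux of the argument, and I would obtain it from the Coarea Formula of Proposition \ref{1coar1} together with the gradient bound in Remark \ref{iponhomnor}. Since $\qq u$ is the orthogonal projection of $\grad\cc u=\grad\cc\varrho$ onto $\HS$, condition (ii) of Remark \ref{iponhomnor} gives $|\qq u|\leq|\grad\cc\varrho|\leq 1$ at regular points of $\varrho$. Applying the coarea formula on the annular region $S\cap\{s_1\leq u<s_2\}$ and using $|\qq u|\leq 1$ one finds
\[\int_{s_1}^{s_2}\nis(S\cap\partial B_\varrho(x,s))\,ds=\int_{S\cap\{s_1\leq u<s_2\}}|\qq u|\,\per\leq\per(S_{s_2})-\per(S_{s_1})=P(s_2)-P(s_1).\]
Dividing by $s_2-s_1$ and letting $s_2\downarrow s_1=t$, at every point $t$ which is simultaneously a differentiability point of $P$ and a Lebesgue point of $s\mapsto\nis(S\cap\partial B_\varrho(x,s))$ — hence for $\mathcal{L}^1$-a.e.\ $t$ — one obtains $\nis(S\cap\partial B_\varrho(x,t))\leq P'(t)$.

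Finally, inserting this estimate into the inequality produced by \eqref{1pisoplin} replaces the term $\nis(S\cap\partial B_\varrho(x,t))$ by the larger quantity $P'(t)$, which is exactly the equivalent form of the claim; dividing back by $t^{\DH}$ gives the stated monotonicity inequality for a.e.\ $t>0$. The delicate point is precisely the estimate $\nis(S\cap\partial B_\varrho(x,t))\leq P'(t)$: it rests on the sub-unit gradient bound $|\qq u|\leq 1$ and on the differentiation of the monotone function $P$, and it is here that the regularity assumptions on $\varrho$ from Remark \ref{iponhomnor} and the almost-everywhere regularity of the level sets of $u$ are genuinely used.
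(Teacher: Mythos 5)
Your proposal is correct and follows essentially the same route as the paper's own proof: applying the Minkowski-type linear inequality to $S_t$ with $R=t$, splitting $\partial S_t$ into $\partial S\cap B_\varrho(x,t)$ and $S\cap\partial B_\varrho(x,t)$, and controlling the spherical piece by $\frac{d}{dt}\per(S_t)$ via the coarea formula together with the bound $|\qq\varrho|\leq|\grad\cc\varrho|\leq 1$. Your write-up is in fact slightly more careful than the paper's at the final differentiation step, where you make explicit the need for $t$ to be both a differentiability point of $\per(S_t)$ and a Lebesgue point of $s\mapsto\nis(S\cap\partial B_\varrho(x,s))$.
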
\begin{proof}Since we are assuming that the homogeneous distance $\varrho$ is smooth (at least
piecewise $\cont^1$), by applying the classical Sard's Theorem we
get that ${S}_t$ is a $\cont^2$-smooth manifold with boundary for
$\mathcal{L}^1$-a.e. $t>0$ (or, equivalently, this claim
 follows  by intersecting ${S}$ with the boundary of a
$\varrho$-ball $B_{\varrho}(x,t)$ centered at $x$ and of radius
$t$). So let us apply formula \eqref{jkjlkloko} for the set ${S}_t$.
We have
\[(\DH-1)\,\per({S}_t)\leq t\left(\int_{{S}_t}\big(|\MS|+
|C\cc\nn|\big)\per+ \nis(\partial{S}_t)\right),\]where $t$ is the
radius of a $\varrho$-ball centered at $x$ and intersecting ${S}$.
Since
$$\partial{S}_t=\{\partial{S}\cap B_\varrho(x,t)\}\cup\{\partial
B_\varrho(x,t)\cap {S}\}$$ we get
that
\begin{equation}\label{li}(\DH-1)\,\per({S}_t)\leq
t\left(\underbrace{\int_{{S}_t}\left(|\MS|
+|C\cc\nn|\right)\per}_{:=\mathcal{A}(t)} +
\underbrace{\nis(\partial{S}\cap
B_\varrho(x,t))}_{:=\mathcal{B}(t)} +\nis(\partial
B_\varrho(x,t)\cap {S})\right).\end{equation}Now let us consider
the function $\psi(y):=\|y-x\|_\varrho\,\,\forall\, y\in {S}$. By hypothesis,
$\psi$ is a $\cont^1$-smooth function -at least piecewise-
satisfying $|\grad\cc\psi|\leq 1;$ see Remark \ref{iponhomnor}. So
we may apply the Coarea Formula to this function. Since $|\qq\psi|\leq|\grad\cc\psi|$,  we easily get
that

\begin{eqnarray*}\per({S}_{{t_1}})-\per({S}_{t})&\geq&
\int_{{S}_{{t_1}}\setminus{S}_t}|\qq\psi|\,\per\\&=&\int_{t}^{{t_1}}\nis\{\psi^{-1}[s]\cap
{S}\}\,ds\\&=&\int_{t}^{t_1}\nis(\partial B_\varrho(x,s)\cap
{S})\,ds.\end{eqnarray*}From the last inequality we infer that
$$\frac{d}{dt}\per({S}_t)\geq\nis(\partial B_\varrho(x,t)\cap
{S})$$for $\mathcal{L}^1$-a.e. $t>0$. Hence, from
this inequality and \eqref{li}, we obtain
$$(\DH-1)\,\per({S}_t)\leq t \left(\mathcal{A}(t)+\mathcal{B}(t)+\frac{d}{dt}\per({S}_t)\right) ,$$which is an equivalent form of
\eqref{diffin}.\end{proof}

We have to notice however that, in order to prove an ``intrinsic''
isoperimetric inequality,
 the number $(\DH-1)$ in the previous differential inequality
 {\it is not the correct one}, which   is $(Q-1)$.
 This  fact motivates a further study, made by the author in \cite{25, 26}.

\section{A theorem about non-horizontal graphs in 2-step Carnot groups}\label{heinz}
We begin by describing our result in the simpler setting of the first Heisenberg group $\mathbb H^1$; see also \cite{2332}.
For the notation, see Example \ref{hng}.

\begin{teo}[Heinz's estimate for $T$-graphs]
Let  $S=\left\lbrace p=\exp(x,y,t)\in{\mathbb H}^1:\,t=f(x,
y)\,\,\forall\,(x,y)\in\R^2\right\rbrace$ be a $T$-graph of class
$\mathbf{C}^2$ over the $xy$-plane. If $|\MS|\geq C>0$, then  $$C\,
\mathcal{H}_{Eu}^2(\P_{xy}(\UU))\leq\mathcal{H}_{Eu}^1(\P_{xy}(\partial\UU))$$for
every $\cont^1$-smooth relatively compact open set $\UU\subset S$.
Hence, taking $\UU:=S\cap C_r({\mathcal T})$, where $C_r({\mathcal
T})$ denotes a  vertical cylinder of radius $r$ around the
$T$-axis ${\mathcal T}:=\left\lbrace p=\exp(0,0,t)\in{\mathbb
H}^1,\,t\in\R\right\rbrace$, yields
$$r\leq \frac{2}{C}$$ for every $r>0$.
\end{teo}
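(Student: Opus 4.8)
The plan is to reduce the statement to a planar divergence identity for the horizontal mean curvature $\MS$ and then to apply the Euclidean divergence theorem in the $xy$-plane. First I would make $\MS$ explicit for the $T$-graph, using the frame $X_1=\partial_x-\tfrac{y}{2}\partial_t$, $Y_1=\partial_y+\tfrac{x}{2}\partial_t$ of Example \ref{hng} and the defining function $u(x,y,t):=t-f(x,y)$ of $S$. One computes $X_1u=-(f_x+\tfrac{y}{2})$ and $Y_1u=-(f_y-\tfrac{x}{2})$, so that the unit $\HH$-normal is $\nn=-(p\,X_1+q\,Y_1)/W$, with $p:=f_x+\tfrac{y}{2}$, $q:=f_y-\tfrac{x}{2}$ and $W:=\sqrt{p^2+q^2}$. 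The decisive point is that $p$, $q$ and $W$ are functions of $(x,y)$ alone; hence $X_1(p/W)=\partial_x(p/W)$ and $Y_1(q/W)=\partial_y(q/W)$, and because the horizontal $\HH$-connection of $\mathbb H^1$ is flat, $\MS=-\div\cc\nn=\div_{xy}V$, where $V:=(p,q)/W$ is a \emph{Euclidean unit} planar vector field, $|V|\equiv1$ (the overall sign is irrelevant, since only $|\MS|$ enters).

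Next I would set $\Omega:=\P_{xy}(\UU)$. As $S$ is an entire graph, $\P_{xy}|_S$ is a $\cont^1$-diffeomorphism onto $\R^2$, so $\Omega$ is relatively compact with $\cont^1$ boundary and $\partial\Omega=\P_{xy}(\partial\UU)$. Off the characteristic set the divergence theorem gives $\int_\Omega\MS\,dx\,dy=\int_{\partial\Omega}\langle V,\nu_\Omega\rangle\,d\mathcal{H}_{Eu}^1$, and $|V|\equiv1$ yields $|\int_\Omega\MS\,dx\,dy|\le\mathcal{H}_{Eu}^1(\partial\Omega)$. Since $\MS$ is continuous and never zero on $S\setminus C_S$, it keeps a fixed sign, which after possibly replacing $\nn$ by $-\nn$ I take positive; then $|\MS|\ge C$ becomes $\MS\ge C$, and combining the two bounds gives $C\,\mathcal{H}_{Eu}^2(\Omega)\le\int_\Omega\MS\,dx\,dy\le\mathcal{H}_{Eu}^1(\partial\Omega)$, i.e. the asserted $C\,\mathcal{H}_{Eu}^2(\P_{xy}(\UU))\le\mathcal{H}_{Eu}^1(\P_{xy}(\partial\UU))$.

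I expect the main obstacle to be the behaviour across the characteristic set $C_S$, which in the plane is the zero set $Z:=\{p=q=0\}$; there $V$ degenerates, the divergence theorem is not literally applicable, and a priori the sign of $\MS$ could differ on the two sides of $Z$. I would control this by the regularization $V_\delta:=(p,q)/\sqrt{W^2+\delta^2}$, which is globally $\cont^1$ and still satisfies $|V_\delta|\le1$, so that $|\int_\Omega\div_{xy}V_\delta\,dx\,dy|\le\mathcal{H}_{Eu}^1(\partial\Omega)$ for every $\delta>0$; the work is to pass to the limit $\delta\to0^+$ and to show that the sign of $\MS$ is constant. Both facts rest on the negligibility of $C_S$: one uses $\mathcal{H}_{CC}^{Q-1}(C_S)=0$ together with $\MS\in L^1(S,\per)$ (Remark \ref{osshsomm}), and the Euclidean--Hausdorff estimate $\dim_{\rm Eu-Hau}(C_S)\le n-2$ of Theorem \ref{baloghteo}, to excise $Z$ by open sets of arbitrarily small boundary length so that the extra contributions vanish. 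Everything else is elementary.

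Finally I would specialize to $\UU:=S\cap C_r({\mathcal T})$. Since $C_r({\mathcal T})=\{x^2+y^2<r^2\}$ and $S$ is an entire graph, $\P_{xy}(\UU)$ is the Euclidean disc of radius $r$ and $\P_{xy}(\partial\UU)$ the circle of radius $r$; thus $\mathcal{H}_{Eu}^2(\P_{xy}(\UU))=\pi r^2$ and $\mathcal{H}_{Eu}^1(\P_{xy}(\partial\UU))=2\pi r$. The main inequality then reads $C\pi r^2\le2\pi r$, that is $r\le2/C$, for every $r>0$.
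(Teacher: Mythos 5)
Your core argument is correct and is, in substance, the paper's own proof pushed down to the $xy$-plane: the paper applies Stokes' theorem on $\UU$ to the $1$-form $\xi:=(\nn\LL T\LL\sigma^3\rr)|_{S\setminus C_S}$, for which $d\xi=-\MS\,\varpi\,\per$ and $\varpi\,\per=(dx\wedge dy)|_S$, and then bounds the boundary term by $\mathcal{H}_{Eu}^1$ using $|\nn|=1$; your identity $\int_\Omega\MS\,dx\,dy=\int_{\partial\Omega}\langle V,\nu_\Omega\rangle\,d\mathcal{H}_{Eu}^1$ with $|V|\equiv1$ is exactly that identity written in planar coordinates, and the specialization to the cylinder is identical. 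So the approach is the same; the issue is with your plan for the characteristic set.

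There, two of your specific claims would fail. First, the assertion that ``$\MS$ is continuous and never zero on $S\setminus C_S$, [hence] it keeps a fixed sign'' is invalid, because $C_S$ can disconnect $S$: for $f(x,y)=\tfrac{xy}{2}+\tfrac{y^3}{3}$ one finds $p=y$, $q=y^2$, so $Z=\{y=0\}$ is an entire line and $\MS=\mathrm{sign}(y)\,(1+y^2)^{-3/2}$ changes sign across $Z$ while $|\MS|\to 1$ near it; continuity off $C_S$ gives nothing. Second, the excision step is impossible as described: a set of Euclidean--Hausdorff dimension $\leq n-2=1$ need not admit neighborhoods of small boundary length. For $f(x,y)=\tfrac{xy}{2}$ the characteristic set projects onto the whole $x$-axis, and every bounded open set containing a unit segment of it has boundary of $\mathcal{H}_{Eu}^1$-length at least $2$; Theorem \ref{baloghteo} controls the \emph{dimension} of $C_S$, not its $\mathcal{H}^1$ measure. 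So neither the passage $\delta\to0^+$ nor the constancy of the sign can be obtained that way; note also that dominated convergence for $V_\delta$ cannot be run with the natural majorant $\mathrm{const}/W$, since $1/W$ is in general not locally integrable (again $f=\tfrac{xy}{2}+\tfrac{y^3}{3}$, where $W\sim|y|$ near $Z$). To be fair, the paper is no more rigorous on these points: it simply declares ``without loss of generality $-\MS\geq C>0$'' and applies Stokes across $C_S$, implicitly relying on admissibility in the sense of Definition \ref{adm}. A correct completion should (a) take the fixed sign of $\MS$ as a standing reduction, as the paper does, rather than derive it, and (b) justify the divergence identity across $Z$, e.g.\ by showing that the bounded field $V$ (equivalently, the bounded form $\xi$) has no singular distributional divergence concentrated on $Z$; that is the real content hidden behind both your regularization and the paper's appeal to Stokes' formula.
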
It follows that any entire $xy$-graph of class $\cont^2$ having constant (or just bounded) horizontal mean curvature $\MS$ must be necessarily  a $\HH$-minimal surface. To see this fact, it is enough to send
$r\longrightarrow + \infty$. The proof of the previous theorem is elementary. More precisely, one uses the following identity:
$$ -\int_{\UU}\MS \varpi\sigma^{2}\cc =\int_{\partial\UU}\nn\LL d\theta,$$where $\theta=T^\ast=dt+\frac{ydx-xdy}{2}$ denotes the dual 1-form to the vertical direction $T$. We also have to remark that $\varpi\sigma^{2}\cc=-d\theta=dx\wedge dy$.
The previous theorem is a generalization to our context of a classical result obtained by Heinz in \cite{112}. This was  generalized by Chern in \cite{933} and then by other authors in a number  of different directions.

Below, we shall restrict ourselves to consider only $2$-step Carnot groups.

\begin{Defi}[Non-horizontal graphs in 2-step Carnot groups] Let $\GG$ be a $2$-step Carnot group and let $Z=\sum_{\alpha\in I\vv}z_\alpha X_\alpha\in\VV$ be a constant vertical vector. In this case, for the sake of simplicity, we reorder the variables in $\gg$ as  $x\equiv(x_{Z^\perp}, x_Z)$, where $x_Z:=\langle x, Z\rangle\in\R$ and $x_{Z^\perp}:=x-x_Z Z\in Z^\perp$. Then, we say that $S\subset\GG$ is a $Z$-graph (over the hyperplane $Z^\perp$) if there exists a function $\psi: Z^\perp\longrightarrow\R$ such that $S=\left\lbrace p=\exp\left(x_{Z^\perp}, \psi(x_{Z^\perp}) \right)\in\GG,\, \, x_{Z^\perp}\in Z^\perp\right\rbrace$.
\end{Defi}

So let us fix  a constant vertical vector $Z\in\VV$ and let
$S=\left\lbrace p=\exp\left(x_{Z^\perp}, \psi(x_{Z^\perp})
\right)\in\GG,\, \, x_{Z^\perp}\in Z^\perp\right\rbrace$ be a
$Z$-graph of class $\mathbf{C}^2$ over the $Z^\perp$-hyperplane.
For the sake of simplicity and without loss of generality, we may
take $Z=X_\alpha$ for a fixed index $\alpha\in I\vv=\left\lbrace
\DH+1,...,n\right\rbrace$.

Now let us define a  differential $(n-2)$-form on $S\subset\GG$ by setting
$$\xi^\alpha:=(\nn\LL X_\alpha\LL\sigma^n\rr)|_{S\setminus C_S} \in\Lambda^2(\TT^\ast S).$$ This differential  $(n-2)$-form $\xi^\alpha$ is well-defined out of $C_S$ and we have to compute its exterior derivative. Below we will briefly sketch a proof, which can also be found in \cite{22}, see Claim 3.22.

\begin{lemma}\label{ibg}We have $d\xi^\alpha|_{S\setminus C_S}=-\MS\varpi_\alpha\per|_{S\setminus C_S}$, at each NC point.
 \end{lemma}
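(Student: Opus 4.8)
The plan is to compute $d\xi^\alpha$ directly by expressing the $(n-2)$-form $\xi^\alpha$ in terms of the $\HH$-perimeter form and the contraction operators, then applying the Cartan formula for the exterior derivative of a contraction. The key observation is that $\xi^\alpha = (\nn\LL X_\alpha\LL\sigma^n\rr)|_{S\setminus C_S}$ is a double contraction of the volume form, so $d\xi^\alpha$ should relate to the Lie derivatives of the contracted vector fields together with the differential of the volume form. First I would recall that $\sigma^n\rr$ is the bi-invariant volume form, hence closed, $d\sigma^n\rr=0$. This is the essential simplification: since $d\sigma^n\rr=0$, the exterior derivative of any contraction of $\sigma^n\rr$ will be governed entirely by divergence-type quantities of the contracting fields, with no correction from $d\sigma^n\rr$.

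The main computational step uses the Cartan magic formula $\Lie_X = d\circ\imath_X + \imath_X\circ d$ applied to the form $X_\alpha\LL\sigma^n\rr$. First I would note that $d(X_\alpha\LL\sigma^n\rr)=\Lie_{X_\alpha}\sigma^n\rr-X_\alpha\LL d\sigma^n\rr=(\div\, X_\alpha)\,\sigma^n\rr=0$, since left-invariant vector fields on a unimodular group (any Carnot group is unimodular) have vanishing Riemannian divergence with respect to the bi-invariant volume. Writing $\beta:=X_\alpha\LL\sigma^n\rr\in\Lambda^{n-1}(\TT^\ast\GG)$, we then have $\xi^\alpha=(\nn\LL\beta)|_{S\setminus C_S}$, and I would compute $d(\nn\LL\beta)=\Lie_\nn\beta-\nn\LL d\beta=\Lie_\nn\beta$. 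Expanding $\Lie_\nn\beta=\Lie_\nn(X_\alpha\LL\sigma^n\rr)=([\nn,X_\alpha]\LL\sigma^n\rr)+(X_\alpha\LL\Lie_\nn\sigma^n\rr)$ and using $\Lie_\nn\sigma^n\rr=(\div\,\nn)\,\sigma^n\rr$, the bracket term and the divergence term combine. The goal is to show that after restriction to $S\setminus C_S$ and pairing against the tangent directions, all the contributions except the $\HH$-mean-curvature term annihilate, leaving precisely $-\MS\varpi_\alpha\per$.

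The cleanest route, which matches the reference to Claim 3.22 in \cite{22}, would be to restrict attention from the start to $S$ and use the relation $\per\res S=(\nn\LL\sigma^n\rr)|_S$ together with the identity $\per\res S=|\PH\nu|\,\sigma^{n-1}\rr\res S$. Since $X_\alpha$ is vertical, I would use $\varpi_\alpha=\langle\nu,X_\alpha\rangle/|\PH\nu|=\nu_\alpha/|\PH\nu|$ and the decomposition $\nu=|\PH\nu|\nn+\sum_\beta\nu_\beta X_\beta$ to track how $X_\alpha$ contracts against the normal and tangential parts. The term $X_\alpha\LL(\nn\LL\sigma^n\rr)$ restricted to $S$ picks out, via the orientation $\nu\LL\sigma^n\rr=\sigma^{n-1}\rr$, the component of $X_\alpha$ transverse to $S$, which produces the factor $\varpi_\alpha$; then $d$ of this, using that the divergence of $\nn$ equals $-\MS$ by definition of the horizontal mean curvature $\MS=-\div\cc\nn$, yields the sign and the mean-curvature factor.

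The hard part will be controlling the tangential and bracket contributions carefully near $S\setminus C_S$ and verifying that the only surviving term is $-\MS\varpi_\alpha\per$, rather than an expression also involving the structural-constant matrix $C\cc$ or derivatives of $\varpi_\alpha$. Concretely, the delicate point is that $\nn$ is only defined on $S\setminus C_S$ and its extension off $S$ is not canonical, so one must check the computation is independent of the chosen extension when restricted back to $S$; this is where using the intrinsic formula $\MS=-\div\cc\nn$ and restricting the Lie-derivative identity to the tangent bundle $\TS$ becomes essential. I expect that a judicious choice of adapted orthonormal horizontal frame $\underline\tau\cc=\{\tau_1,\dots,\tau_\DH\}$ with $\tau_1=\nn$ on $S\setminus C_S$ (as in the proof of Lemma \ref{calcolodivx}) will make the surviving term transparent and force the spurious terms to cancel, so that $d\xi^\alpha|_{S\setminus C_S}=-\MS\varpi_\alpha\per|_{S\setminus C_S}$ follows.
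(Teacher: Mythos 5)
Your Cartan-formula strategy is viable, and the preliminary steps you actually carry out are correct: $d\sigma^n\rr=0$, and $d(X_\alpha\LL\sigma^n\rr)=(\div X_\alpha)\,\sigma^n\rr=0$ because in a nilpotent algebra all adjoint maps are traceless; moreover extension-independence is automatic, since $d$ commutes with restriction to $S\setminus C_S$, so you may compute with any unit-norm horizontal extension of $\nn$ (still denoted $\nn$ below). The genuine gap is that your write-up stops exactly where the lemma begins: the ``hard part'' you defer is the whole content, and --- more seriously --- you never invoke the hypothesis that $\GG$ has step $2$, which is indispensable. Concretely,
\[
[\nn,X_\alpha]\LL\sigma^n\rr=\sum_{j\in I\cc}\nn^j\,\bigl([X_j,X_\alpha]\LL\sigma^n\rr\bigr)-\sum_{j\in I\cc}X_\alpha(\nn^j)\,\bigl(X_j\LL\sigma^n\rr\bigr).
\]
The second sum dies upon restriction to $S$, because $(X_j\LL\sigma^n\rr)|_S=\nu_j\,\sigma^{n-1}\rr|_S$ and $\sum_{j}X_\alpha(\nn^j)\,\nn^j=\tfrac12X_\alpha|\nn|^2=0$ for a unit-norm extension. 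But the first sum vanishes only because $[X_j,X_\alpha]\in[\HH_1,\HH_2]\subseteq\HH_3=\{0\}$ in a $2$-step group. In a group of step $k\geq 3$ this term survives and contributes $\sum_{j\in I\cc}\sum_{k}\nn^j\,\SC^k_{j\alpha}\,\nu_k\,\sigma^{n-1}\rr|_S$, so the claimed identity is simply false there; no ``judicious choice of adapted frame'' can force such terms to cancel, since they are dictated by the Lie algebra, not by the frame. (Note also that these are $[\HH_1,\HH_2]$-type constants $\SC^k_{j\alpha}$, not the entries $\SC^\alpha_{ij}$ of the matrix $C\cc$ you worry about: $C\cc$ never enters this computation.) To finish one also needs $\div\nn=\div\cc\nn$ (again by tracelessness of the adjoint maps, since the flat horizontal connection makes $\div\cc\nn=\sum_j X_j(\nn^j)$), whence the divergence term restricts to $-\MS\,\nu_\alpha\,\sigma^{n-1}\rr|_S=-\MS\varpi_\alpha\per|_S$.

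For comparison, the paper's proof avoids Lie derivatives altogether: it expands $\xi^\alpha=-\sum_{j\in I\cc}\nn^j\zeta_j$ with $\zeta_j:=(X_\alpha\LL X_j\LL\sigma^n\rr)|_S$, shows by direct calculation that $d\zeta_j|_{S\setminus C_S}=\sum_{k}\SC^k_{\alpha j}\,\nu_k\,\sigma^{n-1}\rr|_{S\setminus C_S}$ --- precisely the place where the step-$2$ hypothesis kills everything --- and then gets $-\MS\varpi_\alpha\per$ from $d\xi^\alpha=-\sum_jd\nn^j\wedge\zeta_j$. The two routes are equivalent in content: your bracket term is the paper's $d\zeta_j$, and your unit-norm cancellation together with the identification $\div\cc\nn=-\MS$ is the paper's evaluation of $\sum_jd\nn^j\wedge\zeta_j$. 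What your version buys is the clean closedness statement $d(X_\alpha\LL\sigma^n\rr)=0$; what it obscures, and what you must make explicit to have a proof, is the role of the structural constants and hence of the step-$2$ assumption.
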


\begin{proof}Let us set $\zeta_j:=(X_\alpha \LL X_j \LL
\Omn)|_S$ for any $\alpha\in I\vv$ and $j\in I\cc$ and compute $d\zeta_j:=d(X_\alpha \LL X_j \LL
\Omn)|_S$. Let $\GG$ be a $k$-step Carnot group. We claim that
\begin{eqnarray}d\zeta_j|_{S\setminus C_S}=
\sum_{k=\alpha+1}^n\,\SC^k_{\alpha j}\,(X_k \LL \Omn)| _{S\setminus C_S} =
\sum_{k=\alpha+1}^n\,\SC^k_{\alpha j}\,\nm_k\, {\sigma^{n-1}\rr}|_{S\setminus C_S}.
\end{eqnarray}
The proof of this claim is just a long, but elementary, calculation. Since we are assuming that $\GG$ has step $2$, using the properties of the Carnot structural constants yields $\SC^k_{\alpha j}=0$ whenever $j, k\in I\cc$ and $\alpha\in I\vv$. Hence $d\zeta_j=0$ for every $j\in I\cc$. By linearity
 $\xi^\alpha=-\sum_{j\in I\cc}\nn^j\zeta_j$, where $\nn^j=\langle\nn,X_j\rangle$ for any $j\in I\cc$. It follows easily that $d\xi^\alpha=-\MS\varpi_\alpha\per$, as wished.
\end{proof}

\begin{teo}[Heinz's estimate for non-horizontal graphs in $2$-step Carnot groups]\label{pdt5}
Let $\GG$ be a $2$-step Carnot group and let $Z\in\VV$ be a
constant vertical vector. Furthermore, let $S$ be a $Z$-graph of
class $\mathbf{C}^2$ over the $Z^\perp$-hyperplane. If $|\MS|\geq
C>0$, then  \begin{equation}\label{mnba} C\,
\Ar(\P_{Z^\perp}(\UU))\leq\mathcal{H}_{Eu}^{n-2}(\P_{Z^\perp}(\partial\UU))
\end{equation}for every $\cont^1$-smooth relatively compact open set $\UU\subset S$. Hence, taking $\UU:=S\cap C_r(\mathcal Z)$, where $C_r({\mathcal Z})$
denotes a Euclidean cylinder of radius $r$ around the $Z$-axis
given  by ${\mathcal Z}:=\left\lbrace
p=\exp(0_{Z^\perp},t)\in\GG,\,t\in\R\right\rbrace$, yields
\begin{equation}\label{mbna}r\leq \frac{n-1}{C}
\end{equation}for every $r>0$.
\end{teo}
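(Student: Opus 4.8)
The plan is to transplant the $\mathbb H^1$ argument sketched just before the theorem, replacing the contact form $\theta$ by the $(n-2)$-form $\xi^\alpha$ and using Lemma \ref{ibg} in place of the identity $\varpi\sigma^{2}\cc=-d\theta$. Fix $Z=X_\alpha$ with $\alpha\in I\vv$. Since $C_S$ is negligible and the relevant forms extend across it, I would apply Stokes' theorem to $\xi^\alpha$ on a relatively compact $\UU\subset S$ and invoke Lemma \ref{ibg}:
\[\int_{\partial\UU}\xi^\alpha=\int_{\UU}d\xi^\alpha=-\int_{\UU}\MS\,\varpi_\alpha\,\per .\]
Everything then reduces to identifying the interior integral with a projected area and bounding the boundary integral by a projected perimeter.

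For the interior term I would use that in a $2$-step group the top-layer field is constant, $X_\alpha=\partial_{x_\alpha}$, and the Haar measure is Lebesgue measure, so $\sigma^n\rr=dx_1\wedge\cdots\wedge dx_n$. Hence $\varpi_\alpha\per=(X_\alpha\LL\sigma^n\rr)|_S=\P_{Z^\perp}^{\ast}(d\mathcal L^{n-1})|_S$, the pull-back of the Euclidean volume of $Z^\perp$. Because $S$ is a $Z$-graph, $\P_{Z^\perp}|_S$ is a diffeomorphism onto its image, so $\int_\UU\varpi_\alpha\per=\pm\Ar(\P_{Z^\perp}(\UU))$. On a connected graph $X_\alpha\notin\TS$, so the unit normal keeps a fixed orientation relative to $Z$ and $\varpi_\alpha$ has a constant sign; as $\MS$ is continuous with $|\MS|\ge C>0$ it too has a constant sign, whence $\bigl|\int_\UU\MS\varpi_\alpha\per\bigr|\ge C\,\Ar(\P_{Z^\perp}(\UU))$.

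The boundary term is the crux. Using the contraction–pull-back rule for the linear map $\P_{Z^\perp}$, I would write $\xi^\alpha=\nn\LL\P_{Z^\perp}^{\ast}(d\mathcal L^{n-1})=\P_{Z^\perp}^{\ast}\bigl((\P_{Z^\perp}{}_{\ast}\nn)\LL d\mathcal L^{n-1}\bigr)$, and then, via the graph diffeomorphism,
\[\Bigl|\int_{\partial\UU}\xi^\alpha\Bigr|=\Bigl|\int_{\P_{Z^\perp}(\partial\UU)}(\P_{Z^\perp}{}_{\ast}\nn)\LL d\mathcal L^{n-1}\Bigr|\le\int_{\P_{Z^\perp}(\partial\UU)}\bigl|\P_{Z^\perp}{}_{\ast}\nn\bigr|\,d\mathcal H^{n-2}_{Eu}.\]
The whole inequality \eqref{mnba} thus hinges on the comass estimate $\bigl|\P_{Z^\perp}{}_{\ast}\nn\bigr|\le 1$. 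When $\dim\VV=1$ (the Heisenberg case) $Z$ spans $\VV$, the only vertical correction of $\nn$ lies along $\ee_\alpha$ and is annihilated by $\P_{Z^\perp}$, so $\bigl|\P_{Z^\perp}{}_{\ast}\nn\bigr|=|\nn|=1$ and the bound is an equality; for a higher-dimensional centre the surviving vertical corrections of $\nn$ must be controlled, and this is precisely where the skew-symmetry of the structural constants (through the very shape of $\xi^\alpha$ recorded in Lemma \ref{ibg}) has to be exploited. This is the single genuine difficulty. Granting the bound gives $\bigl|\int_{\partial\UU}\xi^\alpha\bigr|\le\mathcal H^{n-2}_{Eu}(\P_{Z^\perp}(\partial\UU))$, and combining with the interior estimate yields \eqref{mnba}.

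Finally, for \eqref{mbna} I would specialise to $\UU=S\cap C_r(\mathcal Z)$. The graph diffeomorphism carries $\UU$ onto the Euclidean ball $B_r\subset Z^\perp$ and $\partial\UU$ onto the sphere $\partial B_r$, so $\Ar(\P_{Z^\perp}(\UU))=\mathcal H^{n-1}_{Eu}(B_r)$ and $\mathcal H^{n-2}_{Eu}(\P_{Z^\perp}(\partial\UU))=\mathcal H^{n-2}_{Eu}(\partial B_r)$. The elementary identity $\mathcal H^{n-2}_{Eu}(\partial B_r)=\frac{n-1}{r}\,\mathcal H^{n-1}_{Eu}(B_r)$ turns \eqref{mnba} into $C\le(n-1)/r$, i.e. $r\le(n-1)/C$; letting $r\to\infty$ for an entire graph then forces $C=0$, which recovers the $\HH$-minimality statement. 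Apart from the comass bound of the third paragraph, the remaining steps are bookkeeping with the graph projection and Stokes' theorem.
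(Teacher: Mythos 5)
Your proposal retraces the paper's own proof step for step: Stokes' theorem applied to $\xi^\alpha$ together with Lemma \ref{ibg}; the identification $\left(X_\alpha\LL\sigma^n\rr\right)|_S=\pm\,\P_{Z^\perp}^{\ast}\left(d\mathcal L^{n-1}\right)|_S=\varpi_\alpha\per|_S$ via the graph structure; reduction of everything to the projected domain; and the ball/sphere ratio $\mathcal H^{n-2}_{Eu}(\partial B_r)=\frac{n-1}{r}\,\Ar(B_r)$ for the cylinder application. All of that bookkeeping is correct and matches the paper. The problem is your third paragraph: you identify the comass estimate $\bigl|\P_{Z^\perp}{}_{\ast}\nn\bigr|\le 1$ as ``the single genuine difficulty'' and then \emph{grant} it, so as a self-contained argument the proposal is incomplete precisely at its load-bearing step. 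Moreover, what you are granting is false pointwise when $\dim\VV\ge 2$: writing $X_j(x)=\partial_{x_j}+\sum_{\beta\in I\vv}c_{j\beta}(x)\,\partial_{x_\beta}$ with $c_{j\beta}$ linear in the horizontal coordinates (Baker--Campbell--Hausdorff), one gets
\[
\P_{Z^\perp}{}_{\ast}\nn=\sum_{j\in I\cc}\nn^j\,\ee_j+\sum_{\beta\in I\vv,\,\beta\neq\alpha}\Bigl(\sum_{j\in I\cc}\nn^j c_{j\beta}\Bigr)\ee_\beta ,
\]
so $\bigl|\P_{Z^\perp}{}_{\ast}\nn\bigr|\ge 1$, with equality only where the correction terms vanish, and these terms grow linearly in the horizontal coordinates. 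Hence no pointwise bound can work; what would be needed is an integrated cancellation of the correction field $v\vv$, namely of $\int_{\P_{Z^\perp}(\partial\UU)}\langle v\vv,\eta\rangle\,d\mathcal H^{n-2}_{Eu}=\int_{\P_{Z^\perp}(\UU)}\div_{Z^\perp} v\vv \,d\mathcal L^{n-1}$, and $\div_{Z^\perp}v\vv=\sum_{\beta\neq\alpha}\sum_{j}c_{j\beta}\,\partial_{x_\beta}\nn^j$ has no sign and no reason to vanish once the graph profile depends on the vertical variables $x_\beta$, $\beta\neq\alpha$. So invoking ``the skew-symmetry of the structural constants'' is not a proof sketch; it is a placeholder for an argument you do not have.

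For comparison, you should know that the paper's own proof is no more detailed at exactly this point: it writes the boundary term as $\int\langle\nn,\eta\rangle\,d\mathcal H^{n-2}_{Eu}$ and bounds it by $\mathcal H^{n-2}_{Eu}\bigl(\P_{Z^\perp}(\partial\UU)\bigr)$ using $|\nn|=1$, which amounts to replacing $\P_{Z^\perp}{}_{\ast}\nn$ by the frame-coefficient vector $\sum_j\nn^j\ee_j$, i.e.\ to silently discarding the very corrections displayed above. That this substitution is not innocent can be seen from the consistency requirement encoded by Lemma \ref{ibg} plus Stokes, namely $\div_{Z^\perp}\bigl(\P_{Z^\perp}{}_{\ast}\nn\bigr)=-\MS$ on the projected domain: the divergence of the correction field is genuinely part of $\MS$ and cannot be dropped in general. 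When $\dim\VV=1$ (Heisenberg groups, and step-$2$ groups with one-dimensional center) there are no surviving corrections, your bound becomes an identity, and both your argument and the paper's are complete. So you have not taken a wrong route --- you have reproduced the paper's route and exposed the one step it leaves unjustified --- but since you leave that step unproven, your proposal does not establish the theorem as stated for $\dim\VV\ge 2$.
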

\begin{proof} Without loss of generality, we may assume  $-\MS\geq C>0$ and take $Z=X_\alpha$ for some fixed index $\alpha\in I\vv$ . In this case, one has $$\varpi_\alpha\,\per|_S=\nu_\alpha\,\sigma^{n-1}\rr|_S=\left(X_\alpha\LL\sigma^{n}\rr\right) |_S=d\Ar\res X_\alpha^\perp,$$where the last identity follows from our assumption that $S$ is a $X_\alpha$-graph.  By using Lemma \ref{ibg} and Stokes' formula, we obtain the integral identity
$$-\int_\UU\MS\varpi_\alpha\per=\int_{\partial \UU}\nn\LL X_\alpha\LL\sigma^n\rr.$$
Furthermore, we have $$-\int_\UU\MS\varpi_\alpha\,\per=-\int_{\P_{X_\alpha^\perp(\UU)}}\MS d\Ar$$
and  $$\int\left(\nn\LL d\Ar\right)\big|_{\P_{X_\alpha^\perp(\partial \UU)}}=\int\langle\nn,\eta\rangle\,d{\mathcal H}_{Eu}^{n-2}\res\P_{X_\alpha^\perp(\partial \UU)}.$$Putting all together, we get that
$$C\Ar(\P_{X_\alpha^\perp}(\UU))\leq{\mathcal H}_{Eu}^{n-2}\left(\P_{X_\alpha^\perp}(\partial \UU)\right),$$which proves  \eqref{mnba} when $Z=X_\alpha$. Clearly, the thesis follows by linearity. Finally,  \eqref{mbna} follows from \eqref{mnba} and the elementary  calculation  $\frac{{\mathcal H}_{Eu}^{n-2}(\partial B^{n-1}_{Eu})}{\Ar(B^{n-1}_{Eu})}=n-1$, where $B^{n-1}_{Eu}$ denotes a Euclidean unit ball in $Z^\perp\cong\R^{n-1}$.

\end{proof}

It follows that an entire $Z$-graph of class
$\mathbf{C}^2$ over the $Z^\perp$-hyperplane having constant (or bounded) horizontal mean curvature $\MS$ must be necessarily  a $\HH$-minimal hypersurface.

\section{Local Poincar\'{e}-type inequality}\label{wme}
By using an elementary technique, somehow analogous to the one used in Section
\ref{wlinisoineq}, we will state a local Poincar\'e-type
inequality for smooth compactly supported functions  on NC
domains. First we need the following:
\begin{Defi}Let $S \subset\GG$ be a hypersurface of class $\cont^2$ and let $\UU\subseteq S$ be an open domain. We say that $\UU$ is {\rm
uniformly non-characteristic} (abbreviated UNC) if
$$\sup_{x\in\UU}|\varpi(x)|=\sup_{x\in\UU}\frac{|\P\vv\nu(x)|}{|\P\cc\nu(x)|}<+\infty.$$\end{Defi}
We stress that
\begin{eqnarray}\label{biotsav}|C\cc\nn|=\left|\sum_{\alpha\in I\vv}\omega_\alpha C^\alpha\cc\nn\right|\leq
\sum_{\alpha\in I\vv}|\omega_\alpha| \|C^\alpha\cc\|\ngr\leq
\frac{C}{|\PH\nu|},\end{eqnarray}where  $C:=\sum_{\alpha\in I\vv}\|C\cc^\alpha\|\ngr$ only depends on the
structural constants of $\gg$. Let us set
$$R_\UU:=\frac{1}{2\left[\|\MS\|_{L^{\infty}(\UU)} + C\|\varpi\|_{L^{\infty}(\UU)}\right]
}.$$
From
\eqref{biotsav} we have $|C\cc\nn|\leq C\max_{\alpha\in
I\vv}|\varpi_\alpha|$. Moreover
$\int_B|\varpi_\alpha|\,\per=\int_B|\nu_\alpha|\,\sigma^{n-1}\rr\leq\sigma^{n-1}\rr(B)$
for every Borel set $B\subseteq S$.
\begin{teo}\label{0celafo}Let $S \subset\GG$ be a hypersurface of class $\cont^2$. Let $\UU\subset S$ be a  uniformly NC
open domain. Then, for
all $x\in \UU$ and for all $R\leq \min\{{\rm
dist}_\varrho(x,\partial\UU), R_\UU\},$ the following
holds\begin{eqnarray}\label{bibino}
\left(\int_{\UU_R}|\psi|^p\per\right)^{\frac{1}{p}}\leq C_p\,
R\left(\int_{\UU_R}|\qq\psi|^p\per\right)^{\frac{1}{p}} \qquad
p\in[1,+\infty[
\end{eqnarray}for every $\psi\in \cont^1\ss(\UU_R)\cap\cont_0(\UU_R)$. More generally, let $\widetilde{\UU}\subset\UU$
 be a bounded open subset of $\UU$ with smooth boundary and such that ${\rm diam
}_\varrho(\widetilde{\UU})\leq 2 \min\{{\rm
dist}_\varrho(x,\partial\UU), R_\UU\}.$  Then
\begin{eqnarray}\label{bibino2}
\left(\int_{\widetilde{\UU}}|\psi|^p\per\right)^{\frac{1}{p}}\leq
C_p\, {\rm diam
}_\varrho(\widetilde{\UU})\left(\int_{\widetilde{\UU}}|\qq\psi|^p\per\right)^{\frac{1}{p}}
\qquad p\in[1,+\infty[
\end{eqnarray}for every $\psi\in \cont^1\ss(\widetilde{\UU})\cap\cont_0(\widetilde{\UU})$.
\end{teo}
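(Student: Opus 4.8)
The plan is to test the horizontal integration-by-parts machinery against the horizontal position vector field, exactly as in the linear isoperimetric estimate \eqref{1pisoplin}, but now with a weight built from $\psi$. Fix $x\in\UU$ and, using left-invariance of the $\HH$-perimeter and of all the operators involved, translate so that the centre of the ball sits at the identity $0$; then the horizontal position vector $\x\cc=\sum_{i\in I\cc}x_iX_i$ is based at $0$ and satisfies $|\x\cc|\leq\varrho(\cdot,0)\leq R$ on $\UU_R$. Because $\UU$ is UNC it contains no characteristic point, so $\x\ss=\x\cc-\g\nn\in\cont^1(\UU,\HS)$ is a genuine $\cont^1$ horizontal tangent field there and Lemma \ref{calcolodivx} applies pointwise: $\lh(\x\ss)=(\DH-1)+\g\MS+\langle C\cc\nn,\x\ss\rangle$.

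First I would set $\varphi:=|\psi|^p$ and apply the product rule $\lh(\varphi\,\x\ss)=\varphi\,\lh(\x\ss)+\langle\qq\varphi,\x\ss\rangle$. Since $\psi\in\cont_0(\UU_R)$, the field $\varphi\,\x\ss$ is compactly supported in $\UU_R$, so Corollary \ref{GDc}(i) gives $\int_{\UU_R}\lh(\varphi\,\x\ss)\,\per=0$ (the boundary term vanishes). Substituting the expression for $\lh(\x\ss)$ and isolating the good term yields
\begin{equation*}
(\DH-1)\int_{\UU_R}|\psi|^p\,\per=-\int_{\UU_R}|\psi|^p\big(\g\MS+\langle C\cc\nn,\x\ss\rangle\big)\per-\int_{\UU_R}\langle\qq\varphi,\x\ss\rangle\,\per.
\end{equation*}

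Next I would estimate the two error integrals. On $\UU_R$ one has $|\g|\leq|\x\cc|\leq R$, $|\x\ss|\leq|\x\cc|\leq R$, $|\MS|\leq\|\MS\|_{L^\infty(\UU)}$ and, by \eqref{biotsav}, $|C\cc\nn|\leq C\|\varpi\|_{L^\infty(\UU)}$; hence the first integrand is bounded by $R\big(\|\MS\|_{L^\infty(\UU)}+C\|\varpi\|_{L^\infty(\UU)}\big)|\psi|^p$. The threshold $R\leq R_\UU=\tfrac12\big(\|\MS\|_{L^\infty(\UU)}+C\|\varpi\|_{L^\infty(\UU)}\big)^{-1}$ is designed precisely so that this factor is $\leq\tfrac12$, allowing the first error term to be absorbed into the left-hand side; for $\DH\geq2$ this leaves a coefficient at least $(\DH-1)/2$. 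For the second term I would use the a.e. chain rule $\qq\varphi=p\,|\psi|^{p-1}\,\mathrm{sgn}(\psi)\,\qq\psi$, so that $|\langle\qq\varphi,\x\ss\rangle|\leq pR\,|\psi|^{p-1}|\qq\psi|$. Combining,
\begin{equation*}
\frac{\DH-1}{2}\int_{\UU_R}|\psi|^p\,\per\leq pR\int_{\UU_R}|\psi|^{p-1}|\qq\psi|\,\per,
\end{equation*}
and a single application of H\"older's inequality with exponents $p/(p-1)$ and $p$ (trivial when $p=1$) gives \eqref{bibino} with $C_p=2p/(\DH-1)$. The inequality \eqref{bibino2} follows from the same computation, with $R$ replaced by ${\rm diam}_\varrho(\widetilde\UU)$ after centring $\x\cc$ at a point of $\widetilde\UU$, the diameter hypothesis again keeping the absorbing coefficient positive.

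The main obstacle is not the algebra but the admissibility needed to run Corollary \ref{GDc}(i) together with the low regularity of $\varphi=|\psi|^p$ when $p=1$, where $|\psi|$ is only Lipschitz with corners on $\{\psi=0\}$. I would handle this by the standard regularisation replacing $|\psi|$ with $\sqrt{\psi^2+\varepsilon^2}-\varepsilon$, deriving the inequality for the smoothed weight and letting $\varepsilon\to0^+$ by dominated convergence (for $p>1$ the weight is already $\cont^1$ and no regularisation is needed). The UNC hypothesis is exactly what guarantees that $\x\ss$ is bounded and $\cont^1$ throughout $\UU_R$, so no characteristic-set subtlety intervenes; the only genuinely delicate point is checking that the absorbing constant stays strictly positive, which is why the smallness radius $R_\UU$ (and the factor $2$ in the diameter hypothesis) is imposed.
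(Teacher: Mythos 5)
Your proof is correct and follows essentially the same route as the paper: both apply the horizontal integration-by-parts machinery to the horizontal position field weighted by (a regularization of) $|\psi|^p$, bound $|\g|,|\x\ss|\leq R$ and $|C\cc\nn|\leq C\|\varpi\|_{L^\infty(\UU)}$ via the UNC hypothesis, absorb the error term using $R\leq R_\UU$, and conclude with the chain rule and H\"older's inequality. The only differences are cosmetic: the paper applies Theorem \ref{GD} to $X=\psi_\varepsilon\x\cc$ rather than Corollary \ref{GDc} to $\varphi\,\x\ss$ (equivalent, since $\qq\varphi\perp\nn$ makes $\langle\qq\varphi,\x\cc\rangle=\langle\qq\varphi,\x\ss\rangle$), it proves the case $p=1$ first and then bootstraps to general $p$, and its absorption keeps the slightly sharper constant $C_p=2p/(2\DH-3)$ in place of your $C_p=2p/(\DH-1)$.
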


In the above theorem one can take $C_p:=\frac{2p }{2\DH-3}$.

\begin{proof}
 Let
us set
${\psi}_\varepsilon:=\sqrt{\varepsilon^2+\psi^2} \,\,\, (\varepsilon\geq
0)$. By applying Theorem \ref{GD} with $X={\psi}_\varepsilon
x\cc$ we get

\begin{equation*} \int_{\UU_R}\left\{{\psi}_\varepsilon\,\left( (\DH-1)+ \g\MS +  \langle C\cc\nn,
\x\ss\rangle\right)+
\langle\qq{\psi}_\varepsilon,\x\cc\rangle\right\}\,\per =
\int_{\partial\UU_R}{\psi}_\varepsilon\langle
x\cc,\eta\ss\rangle\,\nis,\end{equation*}and so
\begin{eqnarray*}(\DH-1)\int_{\UU_R}{\psi}_\varepsilon\,\per&\leq&
R\left(\int_{\UU_R}\left[{\psi}_\varepsilon\left(|\MS|+
|C\cc\nn|\right) +
|\qq{\psi}_\varepsilon|\right]\per+\int_{\partial\UU_R}{\psi}_\varepsilon\,
\nis\right)\\&\leq&R \left(\|\MS\|_{L^{\infty}(\UU_R)}+ C\|\varpi\|_{L^{\infty}(\UU_R)}
\right)\int_{\UU_R}{\psi}_\varepsilon\,\per\\&+&
R\left(\int_{\UU_R}|\qq{\psi}_\varepsilon|\,\per
+\int_{\partial\UU_R}{\psi}_\varepsilon\,\nis\right).
\end{eqnarray*}By using Fatou's Lemma and the
estimate $R\leq R_\UU$ we get that

\begin{eqnarray*}(\DH-1)\int_{\UU_R}|\psi|\,\per&\leq&(\DH-1)\,
\liminf_{\varepsilon\rightarrow
0^+}\int_{\UU_R}{\psi}_\varepsilon\,\per
\\&\leq&\frac{1}{2}\lim_{\varepsilon\rightarrow
0^+}\int_{\UU_R}{\psi}_\varepsilon\,\per +
R\lim_{\varepsilon\rightarrow
0^+}\left(\int_{\UU_R}|\qq{\psi}_\varepsilon|\,\per
+\int_{\partial\UU_R}{\psi}_\varepsilon\,\nis\right).
\end{eqnarray*}Obviously, $\psi_\varepsilon\longrightarrow|\psi|$ and
$|\qq\psi_\varepsilon|\longrightarrow |\qq\psi|$ as long as
$\varepsilon\rightarrow 0$; moreover $|\psi|=0$ along
${\partial\UU_R}$.  Now since, as it is well-known,
$|\qq|\psi||\leq|\qq\psi|,$ we easily  get the claim by Lebesgue's
Dominate Convergence Theorem. So we have shown that
$$\int_{\UU_R}|\psi|\,\per\leq \frac{2R}{2\DH-3}
\int_{\UU_R}|\qq\psi|\,\per$$ for every  $\psi\in \cont^1\ss(\UU_R)\cap\cont_0(\UU_R)$.
Finally, the general case follows by H\"{o}lder's inequality. More
precisely, let us use the last inequality with $|\psi|$ replaced
by $|\psi|^p$. This implies
\begin{eqnarray*}\int_{\UU_R}|\psi|^p\per&\leq& \frac{2R}{(2\DH-3)}
\int_{\UU_R}p\,|\psi|^{p-1}|\qq\psi|\,\per\\&\leq&\frac{2pR}{(2\DH-3)}\,
\left(\int_{\UU_R}|\psi|^{(p-1)q}\per\right)^{\frac{1}{q}}\left(\int_{\UU_R}|\qq\psi|^{p}\per\right)^{\frac{1}{p}},
\end{eqnarray*}where $\frac{1}{p}+\frac{1}{q}=1$. This achieves the proof of \eqref{bibino}. Finally, \eqref{bibino2} can be proved by repeating
the same arguments as above, just by replacing $R$ with ${\rm
diam}(\widetilde{\UU})$.

\end{proof}

With some extra hypotheses one can show that \eqref{bibino} still
holds up to the characteristic set.

\begin{teo}\label{1celafo}Let $S\subset\GG$ be a hypersurface of class $\cont^2$  with (or without) boundary $\partial S$. We assume that $S$ has bounded horizontal mean curvature $\MS$ and that  $\dim\,C_S<n-2$. Furthermore, let
$\UU_\epsilon\,(\epsilon>0)$ be a family of open subsets of $S$
with   $\cont^1$ boundaries, such
that:\begin{itemize}\item[{\rm(i)}] $C_S\subset\UU_\epsilon$ for
every $\epsilon>0$; \item[{\rm(ii)}]
$\sigma^{n-1}\rr(\UU_\epsilon)\longrightarrow 0$ for
$\epsilon\rightarrow
0^+$;\item[{\rm(iii)}]$\int_{\UU_\epsilon}|\PH\nu|\,\sigma^{n-2}\rr\longrightarrow
0$ for $\epsilon\rightarrow 0^+$.\end{itemize} Then, for
every $x\in S$  and
 every (small enough) $\epsilon>0$ there exists $R_0:=R_0(x, \epsilon)\leq {\rm dist}_\varrho(x,\partial
S)$ such that\begin{eqnarray}\label{bibino1}
\left(\int_{S_R}|\psi|^p\per\right)^{\frac{1}{p}}\leq C_p\,
R\left(\int_{S_R}|\qq\psi|^p\per\right)^{\frac{1}{p}} \qquad
p\in[1,+\infty[
\end{eqnarray}
holds for every $\psi\in\cont\ss^1(S_R)\cap \cont_0(S_R)$ and every
$R\leq R_0$, where
$$R_0:=\min\left\{{\rm dist_\varrho}(x,\partial S),\,\,
\frac{1}{2\left[C\left(1+
\|\varpi\|_{L^{\infty}(S_R\setminus\UU_{\epsilon})} \right) +
\|\MS\|_{L^{\infty}(S_R)} \right]} \right\}.$$
\end{teo}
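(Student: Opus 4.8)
The plan is to reproduce the scheme of Theorem \ref{0celafo}, the only genuinely new feature being the presence of the characteristic set $C_S\subset\UU_\epsilon$, which is disposed of by the three hypotheses on the family $\UU_\epsilon$. As there, it suffices to treat the case $p=1$: once the estimate $\int_{S_R}|\psi|\,\per\le\frac{2R}{2\DH-3}\int_{S_R}|\qq\psi|\,\per$ is available, the general $p\in[1,+\infty[$ follows verbatim by applying it to $|\psi|^p$ and then invoking H\"older's inequality, which yields the stated constant $C_p=\frac{2p}{2\DH-3}$. So I focus on the $L^1$ bound for $\psi\in\cont^1\ss(S_R)\cap\cont_0(S_R)$.

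Following the proof of Theorem \ref{0celafo}, I would set $\psi_\delta:=\sqrt{\delta^2+\psi^2}$ $(\delta>0)$ and apply the horizontal divergence formula to $X=\psi_\delta\,\x\cc$ on $S_R$. Since $\MS\in L^1(S_R,\per)$ by Remark \ref{osshsomm} and $\per(C_S)=0$, Corollary \ref{GDc} together with the Leibniz rule $\lh(\psi_\delta\x\cc)=\psi_\delta\,\lh(\x\cc)+\langle\qq\psi_\delta,\x\cc\rangle$ and the identity of Lemma \ref{calcolodivx} gives, after bounding $|\g|\le|\x\cc|\le R$ and absorbing the boundary term (which is $O(\delta)$ since $\psi|_{\partial S_R}=0$) into $o_\delta(1)$, an inequality of the form $(\DH-1)\int_{S_R}\psi_\delta\,\per\le R\|\MS\|_{L^\infty(S_R)}\int_{S_R}\psi_\delta\,\per+R\int_{S_R}\psi_\delta\,|C\cc\nn|\,\per+R\int_{S_R}|\qq\psi_\delta|\,\per+o_\delta(1)$.

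The decisive step is to split the curvature-type integral at $\UU_\epsilon$, using hypothesis (i) which guarantees that $S_R\setminus\UU_\epsilon$ is non-characteristic. There $|\varpi|\le\|\varpi\|_{L^\infty(S_R\setminus\UU_\epsilon)}$, so by \eqref{biotsav} one has $|C\cc\nn|\le C\|\varpi\|_{L^\infty(S_R\setminus\UU_\epsilon)}$ and this part is $\le RC\|\varpi\|_{L^\infty(S_R\setminus\UU_\epsilon)}\int_{S_R}\psi_\delta\,\per$. On $\UU_\epsilon$ I would instead use the measure estimate $|C\cc\nn|\,\per=|C\cc\nn|\,|\PH\nu|\,\sigma^{n-1}\rr\le C\,\sigma^{n-1}\rr$, again from \eqref{biotsav}, so that this piece is $\le RC\int_{\UU_\epsilon}\psi_\delta\,\sigma^{n-1}\rr\le RC\|\psi\|_{L^\infty}\,\sigma^{n-1}\rr(\UU_\epsilon)$. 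Grouping the first contribution with the $\|\MS\|_{L^\infty(S_R)}$ term and invoking $R\le R_0$, whose denominator is exactly $2[C(1+\|\varpi\|_{L^\infty(S_R\setminus\UU_\epsilon)})+\|\MS\|_{L^\infty(S_R)}]$, lets me absorb them into a factor $\tfrac12\int_{S_R}\psi_\delta\,\per$ on the left, leaving $\tfrac{2\DH-3}{2}\int_{S_R}\psi_\delta\,\per\le R\int_{S_R}|\qq\psi_\delta|\,\per+RC\|\psi\|_{L^\infty}\sigma^{n-1}\rr(\UU_\epsilon)+o_\delta(1)$. Letting $\delta\to0^+$ (so $\psi_\delta\to|\psi|$ and $|\qq\psi_\delta|\le|\qq\psi|$) the $o_\delta(1)$ disappears; the remaining characteristic residual is killed by hypothesis (ii), since $\sigma^{n-1}\rr(\UU_\epsilon)\to0$. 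If one prefers to integrate directly over $S_R\setminus\UU_\epsilon$, the same computation produces instead an inner-boundary flux $\le R^2\|\psi\|_{L^\infty}\int_{\partial\UU_\epsilon}|\PH\nu|\,\sigma^{n-2}\rr$, which is precisely what hypothesis (iii) annihilates; that both error types are negligible is consistent with $\sigma^{n-1}\rr(C_S)=0$, a consequence of $\dim C_S<n-2$ and Theorem \ref{baloghteo}.

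The main obstacle is exactly this reconciliation of two competing limits. Shrinking $\UU_\epsilon$ down to $C_S$ is what annihilates the characteristic contributions, but it forces $\|\varpi\|_{L^\infty(S_R\setminus\UU_\epsilon)}\to+\infty$ (indeed $|\PH\nu|\to0$ on $C_S$), hence a strictly smaller admissible radius; this is precisely why $R_0$ must be taken $\epsilon$-dependent and why the smallness conditions (ii)--(iii), rather than a single uniform bound, are the correct hypotheses. The explicit formula for $R_0(x,\epsilon)$ records the quantitative constraint relating the scale at which \eqref{bibino1} holds to how far the chosen neighborhood $\UU_\epsilon$ must stay from $C_S$. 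A secondary technical point, which I would address carefully, is justifying that $\psi_\delta\,\x\cc$ is admissible in the sense of Definition \ref{adm} up to $C_S$, so that Corollary \ref{GDc} may legitimately be applied on all of $S_R$ rather than only on the uniformly non-characteristic truncations $S_R\setminus\UU_\epsilon$.
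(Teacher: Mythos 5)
You follow the paper's scheme faithfully --- regularization $\psi_\delta=\sqrt{\delta^2+\psi^2}$, the divergence formula applied to $X=\psi_\delta\,\x\cc$, the splitting of the curvature integral at $\UU_\epsilon$ with the measure bound $|C\cc\nn|\,\per\leq C\,\sigma^{n-1}\rr$ on $\UU_\epsilon$ and the sup-bound off it, absorption via $R\leq R_0$, and the reduction of $p>1$ to $p=1$ by H\"older --- but your final step contains a genuine gap. After the absorption you are left with
\[
\frac{2\DH-3}{2}\int_{S_R}|\psi|\,\per\;\leq\; R\int_{S_R}|\qq\psi|\,\per\;+\;R\,C\,\|\psi\|_{L^\infty}\,\sigma^{n-1}\rr(\UU_\epsilon),
\]
and you dispose of the last term ``by hypothesis (ii)'', i.e.\ by letting $\epsilon\to 0^+$. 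That limit is precisely the one you cannot take: the displayed inequality was derived under the constraint $R\leq R_0(\epsilon)$, and, as you yourself note, $\|\varpi\|_{L^\infty(S_R\setminus\UU_\epsilon)}\to+\infty$ as $\epsilon\to0^+$ whenever $\overline{S_R}$ meets $C_S$ (the only interesting case, since otherwise Theorem \ref{0celafo} already applies), so $R_0(\epsilon)\to0$ and any fixed $R>0$ violates the constraint for small $\epsilon$. Thus, with $\epsilon$ and $R\leq R_0(\epsilon)$ both fixed, the residual $R\,C\,\|\psi\|_{L^\infty}\,\sigma^{n-1}\rr(\UU_\epsilon)$ is a fixed positive quantity, and what you have proved is strictly weaker than \eqref{bibino1}. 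Observing that there are ``two competing limits'' and that $R_0$ must be $\epsilon$-dependent names the obstruction; it does not remove it. The same objection applies to your alternative route via hypothesis (iii): the inner-boundary flux is again an additive error that vanishes only in the forbidden limit $\epsilon\to0^+$ (its real role is to justify integrating by parts across $C_S$, i.e.\ your admissibility concern).

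The paper's proof never sends $\epsilon\to0$. Instead, it uses (ii) to pick $\epsilon$ so small that $\sigma^{n-1}\rr(\UU_\epsilon)\leq\widetilde{\delta}$, with $\widetilde{\delta}\leq\int_{S_R}\psi_\delta\,\per\,/\,(2\|\psi\|_{L^\infty})$, so that
\[
\int_{\UU_\epsilon}\psi_\delta\,|C\cc\nn|\,\per\;\leq\;2\,C\,\|\psi\|_{L^\infty}\,\sigma^{n-1}\rr(\UU_\epsilon)\;\leq\;C\int_{S_R}\psi_\delta\,\per ,
\]
i.e.\ the characteristic residual is \emph{dominated by} $C\int_{S_R}\psi_\delta\,\per$ and then absorbed into the left-hand side together with the other curvature terms. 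This is exactly what the extra ``$1$'' inside the factor $C\left(1+\|\varpi\|_{L^\infty(S_R\setminus\UU_\epsilon)}\right)$ in the denominator of $R_0$ is for --- a slack your absorption step never uses. (The price is that ``small enough $\epsilon$'' ends up depending on $\psi$ through the ratio $\int_{S_R}|\psi|\,\per/\|\psi\|_{L^\infty}$, which is how the quantifier in the statement has to be read.) With this single change --- dominate the $\UU_\epsilon$-term and absorb it, rather than send it to zero --- the rest of your argument (Fatou and dominated convergence in $\delta$, H\"older for $p>1$, constant $C_p=\frac{2p}{2\DH-3}$) goes through exactly as in Theorem \ref{0celafo}.
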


\begin{proof} Set
$\psi_\varepsilon:=\sqrt{\varepsilon^2+\psi^2}\,\,(0\leq\varepsilon<1).$
We shall  prove the theorem for $p=1$. The  general case will
follow by using H\"{o}lder's inequality. Let
$\UU_\epsilon\,(\epsilon>0)$ be as above. Fix $\epsilon_0>0$.
For every $\epsilon\leq \epsilon_0$ one has
$$\int_{\UU_{\epsilon}}\psi_\varepsilon|C\cc\nn|\,\per\leq 2  C\,\|\psi\|_{L^{\infty}(\UU_{\epsilon_0})}\sigma^{n-1}\rr(\UU_\epsilon),$$where we have put $C:=\sum_{\alpha\in I\vv}\|C\cc^\alpha\|\ngr$.
Furthermore (ii) implies that for every $\delta>0$ there exists
$\epsilon_\delta>0$ such that
$\sigma^{n-1}\rr(\UU_\epsilon)<\delta$ whenever
$\epsilon<\epsilon_\delta$. Taking $\widetilde{\delta}\leq
\frac{\int_{S_R}\psi_\varepsilon\,\per}{2\|\psi\|_{L^{\infty}
(\UU_{\epsilon_{0}}) }}$, one gets
$$\int_{\UU_{\epsilon}}\psi_\varepsilon|C\cc\nn|\,\per\leq
 C\int_{S_R}\psi_\varepsilon\,\per$$ for every $\epsilon\leq \min\{
\epsilon_{\widetilde{\delta}}, \epsilon_0\}$. Moreover, for any
  $\epsilon\in]0,\min\{ \epsilon_{\widetilde{\delta}},
\epsilon_0\}[$, one has
 $$\int_{S_R\setminus\UU_{\epsilon}}\psi_\varepsilon|C\cc\nn|\,\per\leq C\|\varpi\|_{L^{\infty}
(S_R\setminus\UU_{\epsilon})}\int_{S_R}\psi_\varepsilon\,\per.$$
It follows that
$$\int_{S_R}\psi_\varepsilon|C\cc\nn|\,\per\leq C\big(1+ \|\varpi\|_{L^{\infty}
(S_R\setminus\UU_{\epsilon})}
\big)\int_{S_R}\psi_\varepsilon\,\per.$$ Since, by hypothesis, the
horizontal mean curvature is bounded, we clearly have
$$\int_{S_R}\psi_\varepsilon|\MS|\,\per\leq \|\MS\|_{L^{\infty}
(S_R)}\int_{S_R}\psi_\varepsilon\,\per.$$ Applying Theorem \ref{GD} with $X=\psi_\varepsilon x\cc$ (and arguing as in the proof of Theorem \ref{0celafo}) yields \begin{eqnarray*}(\DH-1)\int_{S_R}{\psi}_\varepsilon\,\per&\leq&
R\left(\int_{S_R}\big\{{\psi}_\varepsilon\big(|\MS|+ |C\cc\nn|\big)
+ |\qq{\psi}_\varepsilon|\big\}\per+\int_{\partial
S_R}{\psi}_\varepsilon \nis\right)\\&\leq&R\big[C\big(1+
\|\varpi\|_{L^{\infty}(S_R\setminus\UU_{\epsilon})} \big) +
\|\MS\|_{L^{\infty}(S_R)}\big]\int_{S_R}\psi_\varepsilon\,\per \\
&+& R\left(\int_{S_R}|\qq{\psi}_\varepsilon| \per+\int_{\partial
S_R}\psi_{\varepsilon}\nis\right).\end{eqnarray*}So if $R\leq R_0$, one gets
$$\int_{S_R}{\psi}_\varepsilon\,\per\leq \frac{2R}{2\DH-3}\left(\int_{S_R}|\qq\psi_\varepsilon|\,\per
+ \int_{\partial S_R}\psi_{\varepsilon}\nis\right).$$ We have
$\psi_\varepsilon\longrightarrow|\psi|$ and
$|\qq\psi_\varepsilon|\longrightarrow |\qq\psi|$ as long as
$\varepsilon\rightarrow 0$ and  $|\psi|=0$ along ${\partial S_R}$.
Since $|\qq|\psi||\leq|\qq\psi|,$ the thesis follows from  Fatou's lemma and Lebesgue's Dominated Convergence Theorem.
\end{proof}

\subsection{A Caccioppoli-type inequality}\label{Cacciopp}
Our final result is a generalization of the classical \it Caccioppoli inequality \rm (see, for instance, \cite{Ambns}) for the operator $\lll$ on smooth hypersurfaces.

Let $S\subset\GG$ be a hypersurface of class $\cont^2$ and  
set $S_R:=S\cap B_\varrho(x, R)$ for any $x\in \GG$. We are going
to consider the functions satisfying, in the distributional sense,
the following problem:
\begin{equation}\label{ci}-\lll\phi=\psi
\qquad\mbox{on}\,\,S_R,\end{equation}whenever $\psi\in L^2(S_R, \per)$.

So let us take a function $\zeta\in \cont^1\ss(S_R)\cap\cont_0(S_R)$
such that $0\leq \zeta\leq 1$, $\zeta=1$ on $S_{R/2}=S\cap
B_\varrho(0,R/2)$ and $|\grad\ss\zeta|\leq C_0/R$. Inserting
into the above equation the function
$\varphi=\zeta^2(\phi-\phi_0)$, where $\phi_0\in\R$ is a fixed constant, and then integrating
over $S_R$, yields

\begin{eqnarray*}\underbrace{\int_{S_R}\zeta^2|\qq\phi|^2\,\per}_{:=I_1}+ \underbrace{2\int_{S_R}
\zeta
(\phi-\phi_0)\langle\qq\zeta,\qq\phi\rangle\,\per}_{:=I_2}=
\underbrace{\int_{S_R}\psi\zeta^2(\phi-\phi_0)\,\per}_{:=I_3}.\end{eqnarray*}
We
have $$I_2\leq \frac{1}{2}\int_{S_R}|\zeta|^2|\qq\phi|^2\,\per +
\underbrace{2 \int_{S_R}|\phi-\phi_0|^2|\qq\phi|^2\,\per}_{:=I_4}.
$$Moreover $I_4\leq {2
C_0^2}/{R^2}\|\phi-\phi_0\|_{L^2(S_R)}$.  Now let us estimate the third integral
$I_3$. We have
\begin{eqnarray*}\int_{S_R}\psi\zeta^2(\phi-\phi_0)\,\per&=&
\int_{S_R}2\left((2R\psi)\frac{\zeta^2(\phi-\phi_0)}{4R}\right)\per
\\&\leq& 4R^2\int_{S_R}\psi^2\,\per +
\frac{1}{16R^2}\int_{S_R}\zeta^4|\phi-\phi_0|^2\,\per\\&\leq&
4R^2\int_{S_R}2\psi^2\,\per +
\frac{1}{R^2}\int_{S_R}|\phi-\phi_0|^2\,\per.\end{eqnarray*}
Since $\zeta=1$ on $S_{R/2}$, using the previous estimates yields\[\int_{S_{R/2}}|\qq\phi|^2\,\per \leq
\frac{2C_0^2+1}{R^2}\int_{S_{R}}|\phi-\phi_0|^2\,\per +
4R^2\int_{S_{R}}\psi^2\,\per.\]

We summarize these calculations, as follows:
\begin{teo}Let $S\subset\GG$ be a hypersurface of class $\cont^2$; let $\phi_0\in\R$ and let $\phi$ be a distributional solution to the equation $-\lll\phi=\psi
\,\,\,\mbox{on}\, \, S_R,$ where $\psi\in L^2(S_R, \per)$. Then, there exists a positive constant $C>0$ such that the following ``Caccioppoli-type'' inequality holds:
 \[\int_{S_{R/2}}|\qq\phi|^2\,\per \leq C\left(
\frac{1}{R^2}\int_{S_{R}}|\phi-\phi_0|^2\,\per +
R^2\int_{S_{R}}\psi^2\,\per\right)\]for every (small enough)
$R>0$, where $S_R:=S\cap
B_\varrho(x,R)$, for any $x \in S$. \end{teo}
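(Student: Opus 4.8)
The plan is to run the classical energy (Caccioppoli) argument, the only nonstandard ingredient being the horizontal integration-by-parts theory of Corollary \ref{GDc}. First I would record the weak formulation of \eqref{ci}. Using the Leibniz rule for $\lh$ recorded after Definition \ref{Deflh}, together with $\lll\phi=\lh(\qq\phi)$, one has for any test function $\varphi$ the pointwise identity $\lh(\varphi\,\qq\phi)=\varphi\,\lll\phi+\langle\qq\varphi,\qq\phi\rangle$. Integrating this over $S_R$ against $\per$ and invoking Corollary \ref{GDc}(i) (the boundary term over $\partial S_R$ drops because $\varphi$ is compactly supported inside $S_R$) yields, for every admissible $\varphi\in\cont^1\ss(S_R)\cap\cont_0(S_R)$,
\[
\int_{S_R}\langle\qq\varphi,\qq\phi\rangle\,\per=-\int_{S_R}\varphi\,\lll\phi\,\per=\int_{S_R}\varphi\,\psi\,\per,
\]
the last equality being \eqref{ci}. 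This is the identity I would take as the meaning of ``distributional solution''.

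Next I would fix the cutoff: choose $\zeta\in\cont^1\ss(S_R)\cap\cont_0(S_R)$ with $0\le\zeta\le1$, $\zeta\equiv1$ on $S_{R/2}$ and $|\qq\zeta|\le C_0/R$, and insert $\varphi=\zeta^2(\phi-\phi_0)$. Since $\phi_0$ is constant, $\qq\varphi=\zeta^2\qq\phi+2\zeta(\phi-\phi_0)\qq\zeta$, so the weak formulation splits into the three terms $I_1,I_2,I_3$ displayed before the statement:
\[
\int_{S_R}\zeta^2|\qq\phi|^2\,\per+2\int_{S_R}\zeta(\phi-\phi_0)\langle\qq\zeta,\qq\phi\rangle\,\per=\int_{S_R}\psi\,\zeta^2(\phi-\phi_0)\,\per.
\]

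I would then control the cross term and the right-hand side by Young's inequality. For $I_2$, Cauchy--Schwarz followed by $2ab\le\tfrac12a^2+2b^2$ gives $2\zeta(\phi-\phi_0)\langle\qq\zeta,\qq\phi\rangle\le\tfrac12\zeta^2|\qq\phi|^2+2(\phi-\phi_0)^2|\qq\zeta|^2$, so the first half-term is absorbed into $I_1$ while the remainder is bounded by $(2C_0^2/R^2)\int_{S_R}|\phi-\phi_0|^2\,\per$ using $|\qq\zeta|\le C_0/R$. For $I_3$, splitting $\psi\,\zeta^2(\phi-\phi_0)=(2R\psi)\cdot\bigl(\zeta^2(\phi-\phi_0)/(2R)\bigr)$ and using $2ab\le a^2+b^2$ together with $\zeta\le1$ gives $I_3\le 4R^2\int_{S_R}\psi^2\,\per+\tfrac{1}{R^2}\int_{S_R}|\phi-\phi_0|^2\,\per$. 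Combining the estimates, absorbing the $\tfrac12 I_1$ onto the left, and finally using $\zeta\equiv1$ on $S_{R/2}$ to replace $\int_{S_R}\zeta^2|\qq\phi|^2\,\per$ by $\int_{S_{R/2}}|\qq\phi|^2\,\per$, produces the asserted inequality with $C$ depending only on $C_0$.

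The computations above are harmless; the delicate point, which I would spell out with care, is the justification of the weak formulation for a merely distributional $\phi$ in the presence of the characteristic set $C_S$. One must verify that $\varphi=\zeta^2(\phi-\phi_0)$ is admissible in the sense of Definition \ref{adm}, so that Corollary \ref{GDc} applies and no contribution arises along $C_S$ (where $\per$ vanishes). This is where the standing regularity $\qq\phi\in L^2(S_R,\per)$ and a routine approximation of $\phi$ by admissible functions enter, the boundary term being killed by the compact support of $\zeta$.
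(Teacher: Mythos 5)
Your proposal is correct and follows essentially the same route as the paper: the same test function $\varphi=\zeta^2(\phi-\phi_0)$ with the same cutoff $\zeta$, the same splitting into $I_1,I_2,I_3$, and the same Young-type absorptions, yielding the inequality with $C$ depending only on $C_0$. The only difference is expository — you derive the weak formulation explicitly from the Leibniz rule for $\lh$ and Corollary \ref{GDc} and flag the admissibility issue near $C_S$, points the paper leaves implicit — so this is a welcome sharpening rather than a different argument.
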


\bibliographystyle{alpha}

{\scriptsize \noindent Francescopaolo Montefalcone:
\\Dipartimento di Matematica \\Universit\`a degli Studi di Padova,\\
  Via Trieste, 63, 35121 Padova (Italy)\,
 \\ {\it E-mail address}:  {\textsf montefal@math.unipd.it}}

\end{document}